\documentclass[leqno]{amsproc}
        \title{Non-connective $K$- and Nil-spectra of additive categories}

\author{Wolfgang L\"uck}
\email{wolfgang.lueck@him.uni-bonn.de}
       \urladdr{http://www.math.uni-muenster.de/u/lueck}
	  \address{Universit\"at Bonn\\
               Mathematisches Institut\\
               Endenicher Allee 60,
               D-53115 Bonn, Germany}
\author{Wolfgang Steimle}
\email{steimle@math.ku.dk}
       \urladdr{http://www.math.ku.dk/~gmd678}
       \address{Institut for Matematiske Fag,
			 K\o benhavns Universitet,
               Universitets\-parken~5,
               DK-2100 K\o benhavn \O, Denmark}

       \date{July 2013}
  
       \keywords{Non-connected algebraic $K$-theory spectrum of additive categories, Nil-spectra}   
       \subjclass[2010]{19D35}

\usepackage[active]{srcltx}
\usepackage{pdfsync}
\usepackage{color}
\usepackage{hyperref}
\usepackage{calc}
\usepackage{enumerate,amssymb}
\usepackage{graphicx}
\usepackage[arrow,curve,matrix,tips,2cell]{xy}
  \SelectTips{eu}{10} \UseTips
  \UseAllTwocells
\usepackage{tikz}
\usetikzlibrary{decorations.pathmorphing,snakes}

\DeclareMathAlphabet{\matheurm}{U}{eur}{m}{n}

\newcommand{\addcat}{\matheurm{Add\text{-}Cat}}
\newcommand{\addcatc}{\matheurm{Add\text{-}Cat}^{\calc}}
\newcommand{\addcatt}{\matheurm{Add\text{-}Cat}_t}

\newcommand{\Spectra}{\matheurm{Spectra}}

\newcommand{\FJH}{\mathrm{FJH}}

\DeclareMathOperator{\Ch}{Ch}

\DeclareMathOperator{\cok}{cok}
\DeclareMathOperator{\colim}{colim}

\DeclareMathOperator{\ev}{ev}

\DeclareMathOperator{\func}{func}

\DeclareMathOperator{\hocolim}{hocolim}
\DeclareMathOperator{\hofib}{hofib}
\DeclareMathOperator{\hocofib}{hocofib}

\DeclareMathOperator{\id}{id}

\DeclareMathOperator{\Idem}{Idem}
\DeclareMathOperator{\map}{map}

\DeclareMathOperator{\Nil}{Nil}

\DeclareMathOperator{\ob}{ob}

\DeclareMathOperator{\pt}{pt}

\DeclareMathOperator{\PW}{PW}

\DeclareMathOperator{\funcc}{func_c}

\newcommand{\Fin}{{\mathcal{F}\text{in}}}

\newcommand{\VCyc}{{\mathcal{VC}}}

  \newcommand{\IZ}{\mathbb{Z}}

  \newcommand{\cala}{\mathcal{A}}
  \newcommand{\calb}{\mathcal{B}}
  \newcommand{\calc}{\mathcal{C}}

  \newcommand{\cali}{\mathcal{I}}
  \newcommand{\calj}{\mathcal{J}}
  \newcommand{\calk}{\mathcal{K}}

  \newcommand{\calr}{\mathcal{R}}

  \newcommand{\bfa}{{\mathbf a}}
  \newcommand{\bfainfty}{{\mathbf a}^{\infty}}
  
  \newcommand{\bfB}{{\mathbf B}}
  \newcommand{\bfb}{{\mathbf b}}
   \newcommand{\bfbinfty}{{\mathbf b}^{\infty}}

  \newcommand{\bfd}{{\mathbf d}}
  \newcommand{\bfE}{{\mathbf E}}

  \newcommand{\bfF}{{\mathbf F}}
  \newcommand{\bff}{{\mathbf f}}

  \newcommand{\bfH}{{\mathbf H}}
  \newcommand{\bfh}{{\mathbf h}}
  
  \newcommand{\bfi}{{\mathbf i}}
  \newcommand{\bfJ}{{\mathbf J}}
  \newcommand{\bfj}{{\mathbf j}}
  \newcommand{\bfK}{{\mathbf K}}
  \newcommand{\bfKch}{\mathbf K_{\operatorname{Ch}}}
  \newcommand{\bfKchinfty}{\mathbf K_{\operatorname{Ch}}^\infty}
  \newcommand{\bfKinfty}{{\mathbf K}^{\infty}}
  \newcommand{\bfKidem}{{\mathbf K}_{\Idem}}
  
  \newcommand{\bfKNilinfty}{{\mathbf K}_{\Nil}^{\infty}}

  \newcommand{\bfL}{{\mathbf L}}
  \newcommand{\bfl}{{\mathbf l}}

  \newcommand{\bfN}{{\mathbf N}}

  \newcommand{\bfs}{{\mathbf s}}
  \newcommand{\bfT}{{\mathbf T}}

  \newcommand{\bfu}{{\mathbf u}}

  \newcommand{\bfZ}{{\mathbf Z}}

\newcommand{\NK}{N\!K}

\newcommand{\bfNK}{{\mathbf N}{\mathbf K}}
\newcommand{\bfNKinfty}{{\mathbf N}{\mathbf K}^{\infty}}

\newcommand{\bfev}{{\mathbf e}{\mathbf v}}
\newcommand{\bfpr}{{\mathbf p}{\mathbf r}}
\newcommand{\bfBHS}{{\mathbf B}{\mathbf H}{\mathbf S}}

\newcommand{\inv}{^{-1}}

\newcommand{\KH}{K\!H}

\newcounter{commentcounter}


\theoremstyle{plain}
\newtheorem{theorem}{Theorem}[section]

\newtheorem{lemma}[theorem]{Lemma}
\newtheorem{corollary}[theorem]{Corollary}
\newtheorem{proposition}[theorem]{Proposition}

\newtheorem{condition}[theorem]{Condition}

\theoremstyle{definition}
\newtheorem{definition}[theorem]{Definition}

\newtheorem{example}[theorem]{Example}

\newtheorem{remark}[theorem]{Remark}

\theoremstyle{remark}

\makeatletter\let\c@equation=\c@theorem\makeatother

\hyphenation{equi-variant}

\newcommand{\version}[1]                       
{\begin{center} Last edited on #1\\
Last compiled on \today\\
file name: \jobname
\end{center}
}


\newcommand{\EGF}[2]{E_{#2}(#1)}

\newcommand{\xycomsquare}[8]                
{$$\xymatrix{#1 \ar[r]^-{#2} \ar[d]^{#4} &
    #3 \ar[d]^{#5}  \\
    #6\ar[r]^-{#7} & #8 }$$}


\begin{document}

\begin{abstract}
  We present an elementary construction of the non-connective algebraic $K$-theory spectrum
  associated to an additive category following the contracted functor approach due to Bass.
  It comes with a universal property that easily allows us to identify it with other
  constructions, for instance with the one of Pedersen-Weibel in terms of $\IZ^i$-graded
  objects and bounded homomorphisms.
\end{abstract}

\maketitle


\typeout{-----------------------  Introduction ------------------------}

\section*{Introduction}

In this paper we present a construction of the non-connective $K$-theory spectrum
$\bfKinfty(\cala)$ associated to an additive category $\cala$. Its $i$-th homotopy
group is the $i$-th algebraic $K$-group of $\cala$ for each $i \in \IZ$. The construction
is a spectrum version of the construction of negative $K$-groups in terms of contracted
functors due to Bass~\cite[\S7 in Chapter XII]{Bass(1968)}. 

By construction, this non-connective delooping of connective $K$-theory is the
universal one that satisfies the Bass-Heller-Swan decomposition:
roughly speaking, the passage  from the connective algebraic $K$-theory spectrum
$\bfK$ to the non-connective algebraic $K$-theory spectrum $\bfKinfty$ 
is up to weak homotopy equivalence uniquely determined by the properties
that the Bass-Heller-Swan map for $\bfKinfty$ is a weak equivalence 
and the comparison map $\bfK \to \bfKinfty$ is
bijective on homotopy groups of degree $\ge 1$. This universal property will easily
allow us to identify our model of a
non-connective algebraic $K$-theory spectrum with the construction due to
Pedersen-Weibel~\cite{Pedersen-Weibel(1985)} based on
$\IZ^i$-graded objects and bounded homomorphisms. 

We will use this construction to explain how the twisted Bass-Heller-Swan
decomposition
for connective $K$-theory of additive categories can be extended to the
non-connective
version, compare~\cite{Lueck-Steimle(2013twisted_BHS)}. We will also discuss that
the
compatibility  of the connective $K$-theory spectrum with filtered colimits
passes to the non-connective $K$-theory spectrum. Finally we deal with homotopy
$K$-theory and applications
to the $K$-theoretic Farrell-Jones Conjecture.

In the setting of functors defined on quasi-separated quasi-compact schemes, a
non-connective
delooping based on Bass's approach was carried out by Thomason-Trobaugh
\cite[section 6]{Thomason-Trobaugh(1990)}. Later Schlichting
\cite{Schlichting(2004deloop_exact), Schlichting(2006)} defined a non-connective
delooping in the wider context of exact categories and even more generally of
``Frobenius pairs''; it is the universal delooping of connective $K$-theory that
satisfies the Thomason-Trobaugh localization theorem, i.e., takes exact sequences of
triangulated categories to cofiber sequences of spectra. Most recently,
Cisinski-Tabuada \cite{Cisinski-Tabuada(2011)} and Blumberg-Gepner-Tabuada
\cite{Blumberg-Gepner-Tabuada(2013)} have used higher category theory to show that
non-connective $K$-theory, defined as a functor on dg categories and stable
$\infty$-categories, respectively, is the universal theory satisfying a list of
axioms containing the localization theorem.

While the latter characterizations are more general in that they apply to a broader
context and emphasize the role of the localization theorem, the approach presented
here has the charm of being elementary and of possessing a universal property in the
context of additive categories.

\tableofcontents

This paper has been financially supported  by the Leibniz-Award 
of the first author granted by the Deutsche Forschungsgemeinschaft. The second author was also supported by ERC Advanced Grant 288082 and the 
Danish National Research Foundation through the Centre for Symmetry and Deformation (DNRF92).
The authors want to thank the referee for his or her useful comments.


\typeout{-----------------------  Section 1: The Bass-Heller-Swan map  ------------------------}

\section{The Bass-Heller-Swan map}
\label{sec:The_Bass-Heller-Swan_map}

Let $\bfE \colon \addcat \to \Spectra$ be a covariant functor   from the category
$\addcat$ of additive categories to the category $\Spectra$ of (sequential) spectra. Our main example
will be the functor $\bfK$ which assigns to an additive category $\cala$ its (connective) algebraic
$K$-theory spectrum $\bfK(\cala)$, with the property that 
$K_i(\cala) = \pi_i(\bfK(\cala))$ for $i \ge 0$ and $\pi_i(\bfK(\cala)) = 0$ for $i \le -1$.  Let
$\cali$ be the groupoid which has two objects $0$ and $1$ and for which the set of
morphisms between any two objects consists of precisely one element.  Equip $\cala \times
\cali$ with the obvious structure of an additive category.  For an additive category
$\cala$ let $j_i \colon \cala \to \cala \times \cali$ be the functor of additive
categories which sends a morphism $f \colon A \to B$ in $\cala$ to the morphism 
$f \times \id_i \colon A \times i \to B \times i$ for $i = 0,1$.  

\begin{condition} \label{con:condition_bfu}
For every additive  category $\cala$, we require the existence of a map
\[
\bfu \colon \bfE(A) \wedge [0,1]_+ \to \bfE(\cala \times \cali)
\]
such that $\bfu$ is natural in $A$ and, for $i = 0,1$ and 
$k_i \colon \bfE(\cala) \to \bfE(\cala) \times [0,1]_+$ 
the obvious inclusion coming from the inclusion $\{i\} \to [0,1]$, the
composite $\bfu\circ k_i$ agrees with $\bfE(j_i)$. 
\end{condition}

The connective algebraic $K$-theory spectrum functor $\bfK$ satisfies 
Condition~\ref{con:condition_bfu}, cf.~\cite[Proposition~1.3.1 on page~330]{Waldhausen(1985)}.

\begin{lemma} Suppose that $\bfE \colon \addcat \to \Spectra$ satisfies Condition~\ref{con:condition_bfu}.
  \label{lem:nat_equiv_homotopy}
  \begin{enumerate}

  \item \label{lem:nat_equiv_homotopy:homotopy} 
    Let $F_i \colon \cala \to \calb$, $i=0,1$ be two
    functors of additive categories and let $T \colon F_0 \xrightarrow{\simeq} F_1$ be a
    natural isomorphism. Then we obtain a homotopy
    \[
    \bfh \colon \bfE(\cala) \wedge [0,1] \to \bfE(\calb)
    \]
    with $\bfh \circ k_i = \bfE(F_i)$ for $i =0,1$. This construction is natural in $F_0$,
    $F_1$ and $T$;

  \item \label{lem:nat_equiv_homotopy_equivalence} An equivalence $F \colon \cala \to \calb$ 
  of additive categories induces a homotopy equivalence $\bfE(F) \colon \bfE(\cala) \to \bfE(\calb)$.

  \end{enumerate}
\end{lemma}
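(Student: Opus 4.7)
For part (i), the plan is to repackage the data $(F_0, F_1, T)$ as a single additive functor $\widetilde{F} \colon \cala \times \cali \to \calb$ satisfying $\widetilde{F} \circ j_i = F_i$ for $i = 0, 1$. On objects this forces $\widetilde{F}(A, i) = F_i(A)$, and on the unique morphism $(\id_A, 0 \to 1) \colon (A, 0) \to (A, 1)$ one is forced to set the image to be $T_A$; the action on all remaining morphisms is then determined by bifunctoriality, and the naturality of $T$ is exactly what guarantees that these data assemble into a well-defined additive functor. With $\widetilde{F}$ in hand, I would define the homotopy as the composite
\[
\bfh \colon \bfE(\cala) \wedge [0,1]_+ \xrightarrow{\;\bfu\;} \bfE(\cala \times \cali) \xrightarrow{\;\bfE(\widetilde{F})\;} \bfE(\calb).
\]
The identity $\bfh \circ k_i = \bfE(F_i)$ is then immediate from Condition~\ref{con:condition_bfu} and the functoriality of $\bfE$:
\[
\bfh \circ k_i \;=\; \bfE(\widetilde{F}) \circ \bfu \circ k_i \;=\; \bfE(\widetilde{F}) \circ \bfE(j_i) \;=\; \bfE(\widetilde{F} \circ j_i) \;=\; \bfE(F_i).
\]
Naturality of $\bfh$ in $F_0, F_1, T$ is inherited from the naturality of $\bfu$ together with the obviously natural passage from $(F_0,F_1,T)$ to $\widetilde{F}$.

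For part (ii), I would apply part (i) twice. Choose a functor $G \colon \calb \to \cala$ quasi-inverse to $F$, together with natural isomorphisms $GF \cong \id_\cala$ and $FG \cong \id_\calb$. Part (i) then produces a homotopy from $\bfE(G) \circ \bfE(F) = \bfE(G F)$ to $\bfE(\id_\cala) = \id_{\bfE(\cala)}$, and a homotopy from $\bfE(F) \circ \bfE(G) = \bfE(F G)$ to $\bfE(\id_\calb) = \id_{\bfE(\calb)}$. Hence $\bfE(G)$ is a homotopy inverse to $\bfE(F)$, and $\bfE(F)$ is a homotopy equivalence.

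The main (and essentially only) nonformal ingredient is the construction of $\widetilde{F}$ as an additive functor, which rests on the identification of additive functors $\cala \times \cali \to \calb$ with triples consisting of two additive functors $\cala \to \calb$ and a natural isomorphism between them, with respect to the chosen additive structure on $\cala \times \cali$. Once this identification is granted, everything else reduces to the defining property of $\bfu$ in Condition~\ref{con:condition_bfu} and bare functoriality of $\bfE$.
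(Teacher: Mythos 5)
Your proof is correct and follows the same approach as the paper: encode $(F_0,F_1,T)$ as an additive functor $\cala\times\cali\to\calb$ (the paper calls it $H$, you call it $\widetilde F$), set $\bfh = \bfE(\widetilde F)\circ\bfu$, and for part~(ii) apply part~(i) to a chosen quasi-inverse and the two natural isomorphisms. You merely spell out a few verifications that the paper leaves implicit.
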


\begin{proof}~\ref{lem:nat_equiv_homotopy:homotopy}
The triple $(F_0,F_1,T)$ induces an additive functor $H\colon \cala \times \cali \to \calb$ with
$H \circ j_i = F_i$ for $i = 0,1$. Define $\bfh$ to be the composite $\bfE(H) \circ \bfu$.
\\[1mm]~\ref{lem:nat_equiv_homotopy_equivalence}
Let $F' \colon \calb \to \cala$ be a functor such that $F' \circ F$ is naturally isomorphic to $\id_{\cala}$ and
$F \circ F'$ is naturally isomorphic to $\id_{\calb}$. Assertion~\ref{lem:nat_equiv_homotopy:homotopy}
implies $\bfE(F') \circ \bfE(F) \simeq \id_{\bfE(\cala)}$ and $\bfE(F) \circ \bfE(F') \simeq \id_{\bfE(\calb)}$.
\end{proof}

Let $\cala$ be an additive category.  Define the \emph{associated Laurent category}
$\cala[t,t^{-1}]$ as follows. It has the same objects as $\cala$.  Given two objects $A$
and $B$, a morphism $f \colon A \to B$ in $\cala[t,t^{-1}]$ is a formal sum 
$f = \sum_{k  \in \IZ} f_k \cdot t^k$, where $f_k \colon A \to B$ is a morphism in $\cala$ and only
finitely many of the morphisms $f_k$ are non-trivial. If $f = \sum_{i  \in \IZ} f_i  \cdot t^i\colon A \to B$
and $g = \sum_{j \in \IZ} g_j \cdot t^j \colon B \to C$ are morphisms in $\cala[t,t^{-1}]$, 
we define the composite $g \circ f \colon A \to C$ by
\[
g \circ f := \sum_{k \in \IZ} \;\biggl( \sum_{\substack{i,j \in \IZ,\\i+j = k}} g_j \circ   f_i\biggr) \cdot t^k.
\]
The direct sum and the structure of an abelian group on the set of morphism from $A$ to
$B$ in $\cala[t,t^{-1}]$ is defined in the obvious way using the
corresponding structures of $\cala$.

Let $\cala[t]$ and $\cala[t^{-1}]$ respectively be the additive subcategory of
$\cala[t,t^{-1}]$ whose set of objects is the set of objects in $\cala$ and whose morphism
from $A$ to $B$ are given by finite Laurent series $\sum_{k \in \IZ} f_k \cdot t^k$
with $f_k = 0$ for $k < 0$ and $k > 0$ respectively.

If $\cala$ is the additive category of finitely generated free $R$-modules, then
$\cala[t]$ and $\cala[t,t^{-1}]$ respectively are equivalent to the additive category of
finitely generated free modules over $R[t]$ and $R[t,t^{-1}]$ respectively.

Define functors
\[
z[t,t^{-1}], z[t], z[t^{-1}]  \colon \addcat \to \addcat
\]
by sending an object $\cala$ to the object $\cala[t,t^{-1}]$, $\cala[t]$ and $\cala[t^{-1}]$ respectively.  
Their definition on morphisms in $\addcat$ is obvious.
Next we define natural transformations of functors $\addcat \to \addcat$
\begin{equation}\label{eq:square_of_functors}\xymatrix{
  & z[t] \ar[rd]^{j_+} \ar@/_2ex/[ld]_{\ev_0^+}\\
\id_{\addcat}  \ar[ru]_{i_+} \ar[rd]^{i_-} \ar[rr]^{i_0}  && z[t,t\inv]\\
  & z[t\inv] \ar[ru]_{j_-} \ar@/^2ex/[lu]^{\ev_0^-}
}\end{equation}

We have to specify for every object $\cala$ in $\addcat$  their values on $\cala$.
The functors $i_0(\cala)$, $i_+(\cala)$ and $i_-(\cala)$ send a morphism $f \colon A \to B$ in
$\cala$ to the morphism $f \cdot t^0 \colon A \to B$.  The functors
$j_{\pm}(\cala)$ are the obvious inclusions.
The functor $\ev_0^{\pm}(\cala) \colon \cala_{\phi}[t^{\pm}] \to \cala$ 
sends a morphism $\sum_{k \ge 0} f_k \cdot t^k$ in $\cala[t]$
or $\sum_{k \le 0} f_k \cdot t^k$ in $\cala[t^{-1}]$
respectively to $f_0$.  Notice that $\ev_0^+\circ i_+= \ev_0^-\circ i_-= \id_{\cala}$ 
and $i_0 = j_+ \circ i_+ = j_- \circ i_-$ holds.

Given a functor $\bfE\colon\addcat\to\Spectra$, we now define a number of new functors of the same type. Put
\begin{eqnarray*}
\bfZ_{\pm}\bfE  & := & \bfE \circ z[t^{\pm1}];
\\
\bfZ\bfE &  := & \bfE \circ z[t,t^{-1}].
\end{eqnarray*} 
The square \eqref{eq:square_of_functors} induces a square of natural transformations
\[\xymatrix{
  & \bfZ_+\bfE \ar[rd]^{\bfj_+} \ar@/_2ex/[ld]_{\bfev_0^+}\\
\bfE  \ar[ru]_{\bfi_+} \ar[rd]^{\bfi_-} \ar[rr]^{\bfi_0}  && \bfZ_\pm\bfE\\
  & \bfZ_-\bfE \ar[ru]_{\bfj_-} \ar@/^2ex/[lu]^{\bfev_0^-}
}\]


Next we define a natural transformation 
\[
\bfa \colon \bfE\wedge (S^1)_+  \to   \bfZ\bfE.
\]
Let $T \colon i_0 \to i_0$ be the natural transformation of functors $\cala \to
\cala[t,t^{-1}]$ of additive categories whose value at an object $A$ is given by the
isomorphism $\id_A \cdot t \colon A \to A$ in $\cala[t,t^{-1}]$.  Because of
Lemma~\ref{lem:nat_equiv_homotopy}~\ref{lem:nat_equiv_homotopy:homotopy} it induces a
homotopy $\bfh \colon \bfE(\cala) \wedge [0,1]_+ \to \bfE(\cala[t,t^{-1}])$ such that
$\bfh_0 = \bfh_1 = \bfE(i_0)$ holds, where $\bfh_i := \bfh \circ k_i$ for the obvious
inclusion $k_i \colon \bfE(\cala) \to \bfE(\cala) \times [0,1]_+$ for $i = 0,1$. 
Since we have the obvious pushout
\[
\xymatrix{\bfE(\cala)\wedge \{0,1\}_+ \ar[r] \ar[d]
& \bfE(\cala) \wedge [0,1]_+ \ar[d]
\\
\bfE \ar[r]
&
\bfE(\cala) \wedge (S^1)_+}
\]
we obtain  a map $\bfa(\cala) \colon \bfE(\cala) \wedge (S^1)_+ \to  \bfE(\cala[t,t^{-1}])$.
This defines a transformation
\[
\bfa \colon \bfE\wedge (S^1)_+  \to   \bfZ\bfE.
\]

In order to guarantee the existence of $\bfa$, we have imposed the Condition~\ref{con:condition_bfu}
which is stronger than just demanding that $\bfE$ sends equivalences of additive categories
to (weak) homotopy equivalences of spectra.

Define $\bfN_{\pm}\bfE$ to be the homotopy fiber of the map of
spectra $\bfev_0^{\pm} \colon \bfZ^{\pm}\bfE \to \bfE$.
Let 
\[
\bfb_{\pm }  \colon \bfN_{\pm}\bfE\to \bfZ_{\pm}E \xrightarrow{\bfj_\pm} \bfZ\bfE
\]
be the composite map, where the first map is the canonical one.

Define
\begin{eqnarray*}
\bfB\bfE & = & \bigl(\bfE \wedge (S^1)_+\bigr) \vee \bfN_+\bfE \vee \bfN_-\bfE;
\\
\bfB_r\bfE & = & \bfE \vee \bfN_+\bfE \vee \bfN_-\bfE
\end{eqnarray*}
and let
\begin{eqnarray*}
\bfBHS  :=  \bfa \vee \bfb_+ \vee \bfb_- & \colon & \bfB\bfE \to \bfZ_\pm\bfE;
\\
\bfBHS_r  :=  \bfi_0 \vee \bfb_+ \vee \bfb_- & \colon & \bfB_r\bfE \to \bfZ_\pm\bfE.
\end{eqnarray*}
We sometimes call $\bfBHS$ the \emph{Bass-Heller-Swan map} and
$\bfBHS_r$ the \emph{restricted Bass-Heller-Swan map}.

We have the following commutative diagram
\begin{eqnarray}
&
\xymatrix{\bfE \ar[d] \ar[r]^-{\bfl} 
&\bfE \wedge (S^1)_+ \ar[d]^{\bfa}
\\
\bfB_r\bfE \ar[r]^-{\bfBHS_r} & 
\bfZ\bfE
}
\label{homotopy_pushout_for_bfl}
\end{eqnarray}
where the left vertical arrow is the canonical inclusion,
and $\bfl$ is the obvious inclusion coming from the 
identification $\bfE = \bfE \wedge \pt_+$ and the inclusion $\pt_+ \to (S^1)_+$.
It induces a map $\bfs'' \colon \hocofib(\bfl) \to \hocofib(\bfBHS_r)$.
Let $\bfpr \colon \hocofib(\bfl) \to \Sigma \bfE  := \bfE \wedge S^1$ be the obvious projection
which is a homotopy equivalence. Define $\bfL\bfE$ to be the homotopy pushout
\begin{eqnarray}
\xymatrix{
\hocofib(\bfl) \ar[r]^-{\bfpr}_-{\simeq} \ar[d]_{\bfs''}
& 
\Sigma \bfE \ar[d]^{\bfs'}
\\
\hocofib(\bfBHS_r) \ar[r]_-{\overline{\bfpr}}^-{\simeq}
&
\bfL\bfE
}
\label{diagram_for_bfL}
\end{eqnarray}
By construction we obtain a homotopy cofiber sequence
\[
\bfBHS_r\bfE \xrightarrow{\bfBHS_r} \bfZ\bfE \to \bfL\bfE.
\]
Denote by 
\[
\bfs \colon \bfE \to \Omega \bfL\bfE
\] 
the adjoint of $\bfs'$.

Summarizing, we have now defined functors
\[
\bfB\bfE, \bfB_r\bfE, \bfL\bfE, \bfN_{\pm}\bfE, \bfZ_{\pm}\bfE,\bfZ\bfE \colon \addcat \to \Spectra
\]
and natural transformations
\begin{eqnarray*}
\bfi_0 \colon \bfE & \to & \bfZ\bfE;
\\
\bfi_{\pm} \colon \bfE & \to & \bfZ_{\pm}\bfE;
\\
\bfj_{\pm} \colon \bfZ_{\pm} \bfE & \to & \bfZ\bfE;
\\
\bfev_0^{\pm} \colon \bfZ_{\pm} \bfE & \to & \bfE;
\\
\bfa \colon \bfE \wedge (S^1)_+ & \to & \bfZ \bfE;
\\
\bfb_{\pm} \colon \bfN_{\pm}\bfE & \to & \bfZ\bfE;
\\
\bfBHS \colon \bfB\bfE & \to & \bfZ\bfE;
\\
\bfBHS_r \colon \bfB_r\bfE & \to & \bfZ\bfE;
\\
\bfs \colon \bfE & \to & \Omega \bfL\bfE.
\end{eqnarray*}

\begin{definition}[Compatible transformations]\label{def:funcc}
  Let $E,F\colon \addcat\to\Spectra$ be two functors satisfying 
 Condition~\ref{con:condition_bfu}. A natural transformation $\phi\colon \bfE\to \bfF$ is called
  \emph{compatible} if the obvious diagram
  \[\xymatrix{
    \bfE(\cala)\wedge [0,1]_+ \ar[rr]^\bfu \ar[d]^\phi && \bfE(\cala\times\cali) \ar[d]^\phi\\
    \bfF(\cala)\wedge [0,1]_+ \ar[rr]^\bfu && \bfF(\cala\times\cali) }\]
  is commutative. The category
  \[\funcc(\addcat, \Spectra)\]
  is the category of functors satisfying Condition~\ref{con:condition_bfu} whose morphisms
  are compatible natural transformations.
\end{definition}

We leave the proof of the following lemma to the reader:

\begin{lemma}\label{lem:inheritance_of_condition_under_general_constructions}
  \begin{enumerate}
  \item If $\bfE\colon \addcat\to\Spectra$ satisfies Condition~\ref{con:condition_bfu}
    then so do $\bfE\wedge X$ and $\map(X, \bfE)$ for any space $X$.
  \item If $\bfE$ and $\bfE'$ satisfy Condition~\ref{con:condition_bfu} then so does
    $\bfE\vee\bfE'$.
  \item If
    \[\bfE_1 \rightarrow \bfE_0 \leftarrow \bfE_2\]
    is a diagram in $\funcc(\addcat, \Spectra)$ , then its homotopy pullback satisfies
    Condition~\ref{con:condition_bfu}.
  \item If $\calj$ is a small category and $F\colon \calj\to \funcc(\addcat,\Spectra)$ is
    a functor, then $\hocolim F$ satisfies Condition~\ref{con:condition_bfu}.
  \end{enumerate}
\end{lemma}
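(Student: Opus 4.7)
The plan in each of the four cases is to construct the required natural map $\bfu$ for the new functor out of the $\bfu$-maps of the inputs, and to verify naturality in $\cala$ and the identities $\bfu\circ k_i=\bfE(j_i)$ by reducing them to the corresponding properties of the inputs. Parts (i) and (ii) are essentially formal. For (ii), I would set $\bfu_{\bfE\vee\bfE'}:=\bfu_\bfE\vee\bfu_{\bfE'}$ using the distributivity of $\wedge$ over $\vee$. For (i), for $\bfE\wedge X$ I would transpose $X$ past $[0,1]_+$ and smash $\bfu_\bfE$ with $\id_X$; for $\map(X,\bfE)$ I would use the natural candidate $(f,s)\mapsto\bigl(x\mapsto\bfu_\bfE(f(x),s)\bigr)$ arising from the adjunction $(-)\wedge X\dashv\map(X,-)$.

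The main obstacle is part (iii), because $(-)\wedge[0,1]_+$ is a left adjoint and therefore does \emph{not} commute with homotopy pullbacks in general; one cannot simply pull $\wedge[0,1]_+$ inside the limit. The way around this is to observe that Condition~\ref{con:condition_bfu} only asks for a map (not a weak equivalence), and that the homotopy pullback of spectra admits an explicit functorial model as the object of compatible triples $(x_1,\gamma,x_2)$ with $x_i\in\bfE_i(\cala)$ and $\gamma$ a path in $\bfE_0(\cala)$ from $\phi_1(x_1)$ to $\phi_2(x_2)$, where $\phi_i\colon\bfE_i\to\bfE_0$ are the structure maps. On such a triple together with $s\in[0,1]_+$, I would define
\[
\bfu_\bfE\bigl((x_1,\gamma,x_2),s\bigr):=\bigl(\bfu_{\bfE_1}(x_1,s),\;t\mapsto\bfu_{\bfE_0}(\gamma(t),s),\;\bfu_{\bfE_2}(x_2,s)\bigr).
\]
Compatibility of $\phi_1$ and $\phi_2$ with the $\bfu$-maps (which is precisely the hypothesis that they are morphisms in $\funcc(\addcat,\Spectra)$) guarantees $\bfu_{\bfE_0}(\phi_i(x_i),s)=\phi_i(\bfu_{\bfE_i}(x_i,s))$, so the image is again a compatible triple and thus lies in the homotopy pullback of $\bfE_1(\cala\times\cali)\to\bfE_0(\cala\times\cali)\leftarrow\bfE_2(\cala\times\cali)$. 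Naturality in $\cala$ and the $k_i$-identities are inherited pointwise from the inputs.

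Part (iv) is then easier, because $(-)\wedge[0,1]_+$, being a left adjoint, strictly preserves colimits, and with a standard bar-construction model of $\hocolim$ also preserves homotopy colimits up to a natural isomorphism. Using the hypothesis that all the structure maps $F(j)\to F(j')$ are compatible, the individual $\bfu_{F(j)}$ assemble into one map
\[
(\hocolim_\calj F)(\cala)\wedge[0,1]_+\;\cong\;\hocolim_\calj\bigl(F(j)(\cala)\wedge[0,1]_+\bigr)\longrightarrow\hocolim_\calj F(j)(\cala\times\cali)=(\hocolim_\calj F)(\cala\times\cali),
\]
and naturality in $\cala$ as well as the identities on $k_0$ and $k_1$ again come for free from the corresponding properties of the $F(j)$.
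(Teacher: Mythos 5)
The paper leaves this lemma to the reader, so there is no in-text proof to compare against; your proposal is correct and supplies the intended elementary verifications.

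One remark on part~(iii), since you flag it as the "main obstacle": the observation that $(-)\wedge[0,1]_+$ does not commute with homotopy pullbacks up to weak equivalence is true, but the obstruction you should be worried about is not really there. The homotopy pullback in this paper's framework is, levelwise, an honest (strict) limit of spaces built out of products and a path space. For \emph{any} endofunctor $G$ of pointed spaces and any strict limit cone $L\to F(i)$, applying $G$ to the cone produces a canonical comparison map $G(L)\to\lim(G\circ F)$; no adjointness of $G$ is needed, and the direction of the map is the one you want. Composing with the maps $\bfE_k(\cala)\wedge[0,1]_+\to\bfE_k(\cala\times\cali)$ on the three vertices, one gets the desired map, and your explicit triple formula
\[
\bfu\bigl((x_1,\gamma,x_2),s\bigr)=\bigl(\bfu_{\bfE_1}(x_1,s),\;t\mapsto\bfu_{\bfE_0}(\gamma(t),s),\;\bfu_{\bfE_2}(x_2,s)\bigr)
\]
is exactly this composite written out levelwise. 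The real content of the verification, which you correctly identify, is that the compatibility of $\phi_1,\phi_2$ guarantees the target triple satisfies the boundary conditions, and that the $k_i$-identities for the inputs propagate. So the argument is sound; you have just routed around a non-problem along the way. Parts (i), (ii), (iv) are treated appropriately, with (iv) using precisely the extra hypothesis that the structure maps of $F$ lie in $\funcc(\addcat,\Spectra)$ to make the family $\bfu_{F(j)}$ natural in $j$.
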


\begin{remark}
    The category $\Spectra$ is the ``naive'' one with strict morphisms of spectra as described for instance
   in~\cite{Davis-Lueck(1998)}.  Our model for $\Omega\bfE$ is the spectrum $\map(S^1, \bfE)$ defined levelwise,
    and analogously for the homotopy pushout, homotopy pullback, homotopy fiber,
   and more general for homotopy colimits and homotopy limits
   over arbitrary index categories.  For more details see for 
   instance~\cite{Davis-Lueck(1998),Hollender-Vogt(1992)}.
  \end{remark}

As an application of Lemma~\ref{lem:inheritance_of_condition_under_general_constructions}, we deduce:

\begin{lemma} \label{lem:cond_inherited}
If $\bfE \colon \addcat \to \Spectra$ satisfies Condition~\ref{con:condition_bfu}, then so do
$\bfB\bfE$, $\bfB_r\bfE$, $\bfL\bfE$, $\bfN_{\pm}\bfE$, $\bfZ_{\pm}\bfE$, and $\bfZ\bfE$.
\end{lemma}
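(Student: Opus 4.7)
The plan is to build each of the six functors from $\bfE$ using two kinds of operations: post-composition with the additive-category-level functors $z[t^{\pm 1}]$ and $z[t,t^{-1}]$, and the categorical operations covered by Lemma~\ref{lem:inheritance_of_condition_under_general_constructions}. Since the lemma directly handles only the second kind of operation, the first step is to deal with composition and to verify that the natural transformations tying everything together are compatible in the sense of Definition~\ref{def:funcc}.

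To handle $\bfZ\bfE$ and $\bfZ_{\pm}\bfE$, I would first observe that for each $F \in \{z[t], z[t^{-1}], z[t,t^{-1}]\}$ there is a canonical natural isomorphism of additive categories
\[
\sigma_\cala\colon F(\cala) \times \cali \xrightarrow{\cong} F(\cala \times \cali),
\]
essentially by inspection: both sides have object set $\ob(\cala)\times\{0,1\}$, and morphisms on either side are Laurent polynomials of the appropriate type in morphisms of $\cala$, since morphisms in $\cala \times \cali$ from $(A,i)$ to $(B,j)$ reduce to morphisms $A \to B$ in $\cala$. Using $\sigma$, I transport $\bfu_\bfE$ to
\[
\bfu_{\bfE\circ F}\colon \bfE(F(\cala))\wedge[0,1]_+ \xrightarrow{\bfu_\bfE} \bfE(F(\cala)\times\cali) \xrightarrow{\bfE(\sigma_\cala)} \bfE(F(\cala\times\cali)),
\]
and verify the required boundary condition from the identity that, under $\sigma_\cala$, the inclusion of $F(\cala)$ into $F(\cala)\times\cali$ at object~$i$ corresponds to $F(j_i)\colon F(\cala) \to F(\cala\times\cali)$. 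In the same spirit, each of $\bfi_0$, $\bfi_\pm$, $\bfj_\pm$ and $\bfev_0^\pm$ arises by applying $\bfE$ to a natural transformation between the functors $\id_{\addcat}$, $z[t]$, $z[t^{-1}]$, $z[t,t^{-1}]$, and its compatibility reduces, by naturality of $\bfu_\bfE$, to a small commutative diagram in $\addcat$ relating $\sigma$ with the natural transformation in question.

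With these preliminaries, Lemma~\ref{lem:inheritance_of_condition_under_general_constructions} handles the rest. The functor $\bfN_\pm\bfE = \hofib(\bfev_0^\pm)$ is the homotopy pullback of $\bfZ_\pm\bfE \to \bfE \leftarrow *$ in $\funcc(\addcat,\Spectra)$ and so satisfies Condition~\ref{con:condition_bfu} by part~(iii); the wedge functors $\bfB\bfE$ and $\bfB_r\bfE$ follow from parts~(i) and~(ii). For $\bfL\bfE$, I would further check that $\bfl$ is compatible (being smash with a basepoint inclusion $\pt_+ \to (S^1)_+$) and that $\bfb_\pm$ is compatible (as the composite of the canonical projection from the homotopy pullback $\bfN_\pm\bfE$ with $\bfj_\pm$); hence $\bfBHS_r = \bfi_0 \vee \bfb_+ \vee \bfb_-$ is compatible, and two applications of part~(iv)---first to the homotopy cofibers $\hocofib(\bfl)$ and $\hocofib(\bfBHS_r)$, then to the defining homotopy pushout~\eqref{diagram_for_bfL} of $\bfL\bfE$---finish the argument.

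The main obstacle is the very first step, establishing $\sigma_\cala$ and verifying its compatibility with the functors $j_i \colon \cala \to \cala\times\cali$ and with each of the four natural transformations $i_0$, $i_\pm$, $j_\pm$, $\ev_0^\pm$. Everything afterwards is formal, but this initial bookkeeping is the one place where one actually uses the explicit definitions of $z[t^{\pm 1}]$ and $z[t,t^{-1}]$.
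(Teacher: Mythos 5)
Your proof is correct and takes essentially the same route as the paper, which gives no explicit proof at all beyond stating that Lemma~\ref{lem:cond_inherited} is ``an application of Lemma~\ref{lem:inheritance_of_condition_under_general_constructions}.'' You are right that Lemma~\ref{lem:inheritance_of_condition_under_general_constructions} on its own does not cover postcomposition with $z[t^{\pm1}]$ and $z[t,t^{-1}]$, so the paper is leaving the reader to supply exactly the step you make explicit: the natural isomorphism $\sigma_\cala \colon F(\cala)\times\cali \cong F(\cala\times\cali)$, the transported map $\bfu_{\bfE\circ F}=\bfE(\sigma_\cala)\circ\bfu_\bfE$, and the verification that the boundary condition $\bfu_{\bfE\circ F}\circ k_i = (\bfE\circ F)(j_i)$ follows from $\sigma_\cala\circ j_i = F(j_i)$; you also correctly note that naturality of $\sigma$ in $\cala$ is what makes $\bfu_{\bfE\circ F}$ natural. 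The rest of your argument (homotopy pullback for $\bfN_\pm$, smash and wedge for $\bfB$ and $\bfB_r$, homotopy pushout for $\hocofib(\bfl)$, $\hocofib(\bfBHS_r)$, and then $\bfL\bfE$, with compatibility of $\bfi_0$, $\bfj_\pm$, $\bfev_0^\pm$, $\bfb_\pm$, and $\bfl$ checked along the way) is exactly the intended application of Lemma~\ref{lem:inheritance_of_condition_under_general_constructions}, carried out with appropriate care.
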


We will apply this as well as the following result without further remarks.

\begin{lemma} \label{lem:f_equiv_implies_Bf,Zf,Nf_equiv}
Let $\bff \colon \bfE \to \bfF$ be a transformation of functors $\addcat \to \Spectra$.
Suppose that it is a weak equivalence, i.e., $\bff(\cala)$ is a weak equivalence for any object $\cala$
in $\addcat$. Then the same is true for the transformations 
$\bfB\bff$, $\bfB_r\bff$, $\bfL\bff$, $\bfN_{\pm}\bff$, $\bfZ_{\pm}\bff$, and $\bfZ\bff$.
\end{lemma}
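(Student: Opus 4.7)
The plan is to treat the six constructions in order of increasing complexity, exploiting the fact that each of them is assembled from simple homotopy-invariant operations on spectra: precomposition with a functor on $\addcat$, smashing with a pointed space, wedge sum, homotopy fiber, homotopy cofiber, and homotopy pushout. All of these preserve levelwise weak equivalences of (sequential) spectra. Thus the task reduces essentially to writing down the definitions.

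First I would dispose of $\bfZ\bff$ and $\bfZ_\pm\bff$. By definition $\bfZ\bfE = \bfE \circ z[t,t\inv]$ and $\bfZ_\pm \bfE = \bfE \circ z[t^{\pm 1}]$, so the map $\bfZ\bff(\cala)$ is just $\bff(\cala[t,t\inv])$, which is a weak equivalence by hypothesis applied to the object $\cala[t,t\inv]\in\addcat$; the same argument handles $\bfZ_\pm\bff(\cala) = \bff(\cala[t^{\pm 1}])$. Next, since $\bfN_\pm\bfE$ is the homotopy fiber of $\bfev_0^\pm\colon \bfZ_\pm \bfE \to \bfE$ and since the square
\[
\xymatrix{
\bfZ_\pm\bfE \ar[r]^{\bfev_0^\pm} \ar[d]^{\bfZ_\pm\bff} & \bfE \ar[d]^{\bff}\\
\bfZ_\pm\bfF \ar[r]^{\bfev_0^\pm} & \bfF
}
\]
has both vertical arrows weak equivalences by what we just showed, $\bfN_\pm\bff$ is a weak equivalence because taking homotopy fibers preserves objectwise weak equivalences of maps of spectra.

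For $\bfB\bff$ and $\bfB_r\bff$, recall that
\[
\bfB\bfE = (\bfE\wedge (S^1)_+) \vee \bfN_+\bfE \vee \bfN_-\bfE,\qquad \bfB_r\bfE = \bfE \vee \bfN_+\bfE \vee \bfN_-\bfE.
\]
Since smashing with the pointed space $(S^1)_+$ and wedging preserve weak equivalences of spectra, and since we have already shown that $\bfN_\pm\bff$ is a weak equivalence, both $\bfB\bff$ and $\bfB_r\bff$ are weak equivalences. Finally, for $\bfL\bff$, observe that $\bfL\bfE$ fits into the homotopy pushout~\eqref{diagram_for_bfL} and that homotopy pushouts of spectra preserve objectwise weak equivalences of diagrams. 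The naturality of the construction produces a map of such pushout diagrams from $\bfE$ to $\bfF$; its three remaining corners are $\hocofib(\bfl)$, $\Sigma\bfE$, and $\hocofib(\bfBHS_r)$. The map on $\Sigma\bfE$ is $\Sigma\bff$, which is a weak equivalence. The maps on the two homotopy cofibers are weak equivalences because homotopy cofibers preserve weak equivalences and because the relevant defining maps $\bfl$ and $\bfBHS_r$ are built from transformations already shown to be weak equivalences under $\bff$. Hence $\bfL\bff$ is a weak equivalence.

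There is no real obstacle; the only point requiring a moment of care is to notice that each construction is genuinely a composition of standard homotopy-invariant operations on $\Spectra$, so that invariance under levelwise weak equivalences propagates mechanically. The argument is the obvious diagrammatic one and could equally be obtained as a formal consequence of Lemma~\ref{lem:inheritance_of_condition_under_general_constructions}.
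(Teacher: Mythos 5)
Your argument is correct and is precisely the proof the paper implicitly has in mind; the paper states the lemma without proof, remarking only that it will be used "without further remarks." Each of $\bfZ$, $\bfZ_\pm$, $\bfN_\pm$, $\bfB$, $\bfB_r$, $\bfL$ is built by composing precomposition with a functor on $\addcat$, smashing with a pointed space, wedges, homotopy fibers, and a homotopy pushout, and each of these operations preserves levelwise weak equivalences of spectra; you track this correctly through the definitions, including the slightly less immediate case of $\bfL\bff$, where one compares the three corners of the homotopy pushout~\eqref{diagram_for_bfL} and uses that the maps $\bfl$ and $\bfBHS_r$ have weakly equivalent sources and targets so their homotopy cofibers compare.
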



\typeout{-----------------------  Section 2: Contracted functors ------------------------}

\section{Contracted functors}
\label{sec:Contracted_functors}

Let $\bfE \colon \addcat \to \Spectra$ be a covariant functor satisfying Condition~\ref{con:condition_bfu}.

\begin{definition}[$c$-contracted]
\label{def:c-contracted_functor}
For $c \in \IZ$, we call $\bfE$ \emph{$c$-contracted} if is satisfies the following two
conditions:

\begin{enumerate}

\item For every $i \in \IZ$ the natural transformation 
  $\pi_i(\bfBHS_r) \colon  \pi_i(\bfB_r\bfE) \to\pi_i(\bfZ\bfE)$ is split injective, i.e., there exists 
a natural   transformation of functors from $\addcat$ to the category of abelian groups
\[
\rho_i \colon \pi_i(\bfZ\bfE) \to \pi_i(\bfB_r\bfE)
\]
such that the composite $\pi_i(\bfB_r\bfE) \xrightarrow{\pi_i(\bfBHS_r)} \pi_i(\bfZ\bfE)
\xrightarrow{\rho_i} \pi_i(\bfB_r\bfE)$ is the identity; 

\item For $i \in \IZ, i \ge -c+1$ the transformation
\[
\pi_i(\bfBHS) \colon \pi_i(\bfB\bfE)  \to \pi_i(\bfZ\bfE)
\]
is an isomorphism, i.e., its evaluation at any additive category $\cala$ is bijective.
\end{enumerate}

We  call $\bfE$ \emph{$\infty$-contracted} if 
$\bfBHS \colon \bfB\bfE  \to \bfZ\bfE$ is a weak homotopy equivalence.
\end{definition}

\begin{lemma} \label{lem:c_contracted_and_vee} Let $\bfE,\bfE' \colon \addcat \to  \Spectra$ 
  be covariant functors satisfying Condition~\ref{con:condition_bfu}.  
  \begin{enumerate} 
  
  \item \label{lem:c_contracted_and_vee:pi_i_iso}
  Suppose that  $\bfE$ and $\bfE'$ satisfy Condition~\ref{con:condition_bfu}.  Consider $i \in \IZ$.
  Then both $\pi_i(\bfBHS(\bfE))$ and $\pi_i(\bfBHS(\bfE)')$ are isomorphisms if and only if
  $\pi_i(\bfBHS(\bfE \vee \bfE'))$ is an isomorphism;

  \item \label{lem:c_contracted_and_vee:contracted}
  Suppose that  $\bfE$ and $\bfE'$ satisfy Condition~\ref{con:condition_bfu}. Consider $c \in \IZ$. Then
   $\bfE \vee \bfE'$ is $c$-contracted if and
  only if both $\bfE$ and $\bfE'$ are $c$-contracted.
\end{enumerate}
\end{lemma}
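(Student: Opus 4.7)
The plan is to reduce both assertions to the elementary algebraic fact that a homomorphism $f_1 \oplus f_2 \colon A_1 \oplus A_2 \to B_1 \oplus B_2$ of abelian groups is an isomorphism (respectively, naturally split injective) if and only if both $f_1$ and $f_2$ are. To this end, I would first establish that every auxiliary construction of Section~\ref{sec:The_Bass-Heller-Swan_map} is compatible with finite wedges of functors.

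Precomposition with a functor $z \colon \addcat \to \addcat$ commutes strictly with wedges, so $\bfZ_{\pm}(\bfE \vee \bfE') = \bfZ_\pm\bfE \vee \bfZ_\pm\bfE'$ and $\bfZ(\bfE \vee \bfE') = \bfZ\bfE \vee \bfZ\bfE'$ on the nose, compatibly with the natural transformations $\bfi_\pm$, $\bfi_0$, $\bfj_\pm$, and $\bfev_0^\pm$. Smashing with a pointed space distributes over wedges, and the naturality of the map $\bfu$ in $\bfE$ shows that $\bfa$ splits in the same way. For the homotopy fiber defining $\bfN_\pm$, I would use that in the category of spectra finite wedges are weakly equivalent to finite products and that homotopy fibers commute with products; this yields $\bfN_\pm(\bfE \vee \bfE') \simeq \bfN_\pm\bfE \vee \bfN_\pm\bfE'$ naturally, compatibly with $\bfb_\pm$. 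Assembling these identifications produces natural weak equivalences $\bfB(\bfE \vee \bfE') \simeq \bfB\bfE \vee \bfB\bfE'$ and $\bfB_r(\bfE \vee \bfE') \simeq \bfB_r\bfE \vee \bfB_r\bfE'$, under which $\bfBHS$ and $\bfBHS_r$ decompose as the wedges of the corresponding maps for $\bfE$ and $\bfE'$.

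Applying $\pi_i$ to this decomposition and using $\pi_i(\bfX \vee \bfX') \cong \pi_i(\bfX) \oplus \pi_i(\bfX')$ for spectra, one identifies $\pi_i(\bfBHS(\bfE \vee \bfE'))$ with $\pi_i(\bfBHS(\bfE)) \oplus \pi_i(\bfBHS(\bfE'))$, and likewise for $\bfBHS_r$. Assertion~\ref{lem:c_contracted_and_vee:pi_i_iso} is then immediate. For~\ref{lem:c_contracted_and_vee:contracted}, the isomorphism condition for $i \ge -c+1$ follows from~\ref{lem:c_contracted_and_vee:pi_i_iso}; for the naturally split injectivity condition, the direction ($\Leftarrow$) is handled by taking the direct sum of natural retractions, while for ($\Rightarrow$) the natural inclusions $\bfE \to \bfE \vee \bfE'$ and projections $\bfE \vee \bfE' \to \bfE$ (which exist as natural transformations since finite wedges of spectra are finite products up to canonical equivalence) allow one to conjugate a natural retraction for the wedge into natural retractions for each summand.

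The main technical point is to carry the naturality of all splittings and equivalences rigorously through the various identifications, and in particular to justify that the homotopy fibers entering the definition of $\bfN_\pm$ really do distribute over finite wedges — this is exactly where the stability of $\Spectra$ is used.
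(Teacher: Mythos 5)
Your proposal is correct and follows essentially the same strategy as the paper's proof: establish that every construction in the Bass-Heller-Swan machinery commutes with finite wedges, and then reduce both claims to the elementary statement about direct sums of maps of abelian groups. The only noteworthy (and minor) divergence is in how you justify $\bfN_\pm(\bfE\vee\bfE')\simeq\bfN_\pm\bfE\vee\bfN_\pm\bfE'$: you appeal directly to stability (finite wedges agree with finite products, and homotopy fibers commute with products), whereas the paper instead exploits the natural splitting $\bfb_\pm\vee\bfi_\pm\colon\bfN_\pm\bfE\vee\bfE\xrightarrow{\simeq}\bfZ_\pm\bfE$ coming from $\bfev_0^\pm\circ\bfi_\pm=\id$ together with the on-the-nose identity $\bfZ_\pm(\bfE\vee\bfE')=\bfZ_\pm\bfE\vee\bfZ_\pm\bfE'$; both arguments are valid and lead to the same identification, and your explicit remark about extracting retractions for each summand from a retraction of the wedge (via the natural inclusions and projections) is a detail the paper leaves implicit but which is exactly right.
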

\begin{proof}
The transformation
\[
\bfb_{\pm} \vee \bfi_0 \colon \bfN_{\pm}\bfE \vee \bfE \to \bfZ_{\pm}\bfE
\]
is a weak equivalence, i.e., its evaluation at any additive category $\cala$ is a  weak equivalence of spectra,
 since $\bfev_0^{\pm}  \circ \bfi_0 = \id_{\bfE}$ holds. Note that 
\[
\bfZ_{\pm}\bfE\vee \bfZ_{\pm}\bfE' = \bfZ_{\pm}(\bfE \vee \bfE')
\]
so
\[
\bfN_{\pm}\bfE \vee \bfN_{\pm}\bfE' = \bfN_{\pm}(\bfE \vee \bfE').
\]
The obvious map
\[
\bigl(\bfE \wedge (S^1)_+\bigr)  \vee \bigl(\bfE' \wedge (S^1)_+\bigr) \to (\bfE \vee \bfE')\wedge (S^1)_+
\]
is an isomorphism. Hence the following two obvious transformations are weak equivalences
\begin{eqnarray*}
\bfB_r\bfE \vee \bfB_r\bfE'  & \to & \bfB_r(\bfE \vee \bfE');
\\
\bfB\bfE \vee \bfB\bfE'  & \to & \bfB(\bfE \vee \bfE').
\end{eqnarray*}
Now the claim follows from the following two commutative diagrams
\[
\xymatrix{\pi_i(\bfB\bfE) \oplus \pi_i(\bfB\bfE')
\ar[r]^-{\cong}  \ar[d]_{\pi_i(\bfBHS(\bfE)) \oplus \pi_i(\bfBHS(\bfE'))}
&
\pi_i(\bfB(\bfE \vee \bfE'))
\ar[d]^{\pi_i(\bfBHS(\bfE \vee \bfE'))}
\\
\pi_i(\bfZ\bfE) \oplus \pi_i(\bfZ\bfE')
\ar[r]^-{\cong}  
&
\pi_i(\bfZ(\bfE \vee \bfE'))}
\]
and
\belowdisplayskip=-12pt
\[
\xymatrix{\pi_i(\bfB_r\bfE) \oplus \pi_i(\bfB_r\bfE')
\ar[r]^-{\cong}  \ar[d]_{\pi_i(\bfBHS_r(\bfE)) \oplus \pi_i(\bfBHS_r(\bfE'))}
&
\pi_i(\bfB_r(\bfE \vee \bfE'))
\ar[d]^{\pi_i(\bfBHS_r(\bfE \vee \bfE'))}
\\
\pi_i(\bfZ\bfE) \oplus \pi_i(\bfZ\bfE')
\ar[r]^-{\cong}  
&
\pi_i(\bfZ(\bfE \vee \bfE'))}
\]
\end{proof}

Define
\begin{eqnarray}
&
\bfK \colon \addcat\to \Spectra
&
\label{bfK}
\end{eqnarray}
to be the connective $K$-theory spectrum functor in the sense of Quillen~\cite[page~95]{Quillen(1973)}
by regarding  $\cala$ as an exact category or in the sense of Waldhausen~\cite[page~330]{Waldhausen(1985)}
by regarding  $\cala$ as a Waldhausen category. (These approaches are equivalent,
see~\cite[Section~1.9]{Waldhausen(1985)}).

\begin{theorem}[Bass-Heller-Swan Theorem for $\bfK$]
\label{the:Bass-Heller-Swan_Theorem_for_bfK}
The functor $\bfK$ is $1$-contract\-ed in the sense of Definition~\ref{def:c-contracted_functor}.
\end{theorem}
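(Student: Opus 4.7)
The plan is to reduce the theorem to the classical Bass--Heller--Swan decomposition for the connective $K$-theory of additive categories and then match the summands in that decomposition with the wedge factors of $\bfB\bfK$ and $\bfB_r\bfK$ appearing in the definitions of $\bfBHS$ and $\bfBHS_r$.

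\textbf{Step 1: Input from the classical Bass--Heller--Swan decomposition.} For an additive category $\cala$, the (connective) $K$-theory functor satisfies the natural decomposition
\[
K_n(\cala[t,t^{-1}]) \;\cong\; K_n(\cala) \oplus K_{n-1}(\cala) \oplus \NK_n^+(\cala) \oplus \NK_n^-(\cala) \qquad (n \ge 1)
\]
where $\NK_n^\pm(\cala):=\ker\bigl(K_n(\ev_0^\pm)\colon K_n(\cala[t^{\pm 1}])\to K_n(\cala)\bigr)$. This is a theorem of Bass on the level of $K_0$ and $K_1$, and its extension to higher $n$ and to additive categories is due to Quillen, Grayson, and collaborators; in the spectrum-level formulation it appears in the first author's joint work with Steimle \cite{Lueck-Steimle(2013twisted_BHS)} that is being used. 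In degree $n=0$, exactness is broken only by a $K_{-1}$-contribution, and the three summands $K_0(\cala) \oplus \NK_0^+(\cala) \oplus \NK_0^-(\cala)$ still split off $K_0(\cala[t,t^{-1}])$ naturally.

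\textbf{Step 2: Matching the summands.} Because $\bfev_0^\pm\circ \bfi_\pm = \id_{\bfK}$, the canonical map $\bfN_\pm\bfK \vee \bfK \to \bfZ_\pm\bfK$ is a weak equivalence, and the induced identification
\[
\pi_n(\bfN_\pm\bfK(\cala)) \;\cong\; \NK_n^\pm(\cala) \qquad (n\ge 0),\qquad \pi_n(\bfN_\pm\bfK(\cala))=0 \quad (n<0)
\]
shows that the maps $\bfb_\pm$ contribute exactly the two Nil-summands to $\pi_n(\bfZ\bfK)$. The remaining summand $K_n(\cala)\oplus K_{n-1}(\cala)$ (for $n\ge 1$) must be supplied by $\bfa$, respectively by $\bfi_0$. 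For $\bfBHS_r$, the relevant summand is just $K_n(\cala)$ via $\bfi_0 = \bfj_+\circ \bfi_+$, which corresponds to the canonical inclusion. For $\bfBHS$, one has to identify the extra $K_{n-1}$-contribution with $\pi_n\bigl(\bfa(\cala)\bigr)$ evaluated on the $S^1$-suspension summand of $\bfK\wedge (S^1)_+$. By construction $\bfa$ is built from the natural automorphism $T\colon i_0\Rightarrow i_0$ given by $t\cdot\id$, so $\pi_n(\bfa)$ on the suspension summand is, up to sign, the cup-product with $[t]\in K_1(\cala[t,t^{-1}])$; this is precisely the map that realises the $K_{n-1}$-summand in the classical Bass--Heller--Swan decomposition.

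\textbf{Step 3: Assembling the conclusion.} Combining Steps~1 and~2, $\pi_n(\bfBHS)$ is an isomorphism for every $n\ge 1$, which settles condition (ii) of Definition~\ref{def:c-contracted_functor} with $c=1$. For condition (i): for $n\ge 1$ split injectivity of $\pi_n(\bfBHS_r)$ follows from the isomorphism statement for $\bfBHS$ (drop the $\bfE\wedge(S^1)_+$-summand); for $n=0$ it is precisely the natural split statement noted at the end of Step~1; and for $n<0$ both $\pi_n(\bfB_r\bfK)$ and $\pi_n(\bfZ\bfK)$ vanish, since connective $K$-theory of an additive category is concentrated in non-negative degrees, so there is nothing to check.

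The main technical point is Step~2: one has to give a natural identification of $\pi_n(\bfa)$ with the Bass--Heller--Swan $K_{n-1}$-summand. This is where Condition~\ref{con:condition_bfu} is essential, since the construction of $\bfa$ used this homotopy, and where one must verify that the algebraic formula ``multiplication by $[t]$'' is correctly modelled on the spectrum level by the natural transformation $T$. Everything else is bookkeeping once this matching is in hand.
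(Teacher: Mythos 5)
Your Step 1 asserts that
\[
K_n(\cala[t,t^{-1}])\cong K_n(\cala)\oplus K_{n-1}(\cala)\oplus \NK_n^+(\cala)\oplus \NK_n^-(\cala)
\]
holds for every additive category $\cala$ and all $n\ge 1$. This is false when $\cala$ is not idempotent complete, already at $n=1$: by cofinality $K_1(\cala[t,t\inv])$ agrees with $K_1((\Idem\cala)[t,t\inv])$, whose lowest Bass--Heller--Swan summand is $K_0(\Idem\cala)$, not $K_0(\cala)$, and these differ in general (take $\cala$ the finitely generated free modules over a ring with nontrivial reduced projective class group). The decomposition you cite from~\cite{Lueck-Steimle(2013twisted_BHS)} is stated under the hypothesis that $\cala$ is idempotent complete, and the paper's proof of the present theorem is devoted precisely to removing that hypothesis: it invokes the cited result for $\Idem(\cala)$ and then uses cofinality~\cite[Theorem~A.9.1]{Thomason-Trobaugh(1990)} to compare $\bfB\bfK(\cala)$, $\bfB_r\bfK(\cala)$, $\bfZ\bfK(\cala)$ with their counterparts for $\Idem(\cala)$. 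Because cofinality controls $\pi_i$ only for $i\ge 1$ and $\bfB\bfK$ carries the degree-shifting summand $\bfK\wedge S^1$, the conclusion this yields for general $\cala$ is that $\pi_i(\bfBHS)$ is an isomorphism for $i\ge 2$ and $\pi_i(\bfBHS_r)$ is split injective for $i\ge 1$, not the ranges you assert. Your remarks about $\pi_0$ --- that $\pi_0\bfN_\pm\bfK(\cala)=0$ since $\cala$ and $\cala[t^{\pm1}]$ have the same objects, and that evaluation at $t=1$ splits the inclusion on $K_0$ --- reproduce exactly the paper's argument for the one case that remains; but without the cofinality reduction your Steps 1--2 claim more than is true.

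A secondary issue: you take ``$\pi_n(\bfBHS)$ iso for $n\ge 1$'' to be condition (ii) of Definition~\ref{def:c-contracted_functor} for $c=1$, but that condition requires isomorphism for $i\ge -c+1$, i.e.\ $i\ge 0$ when $c=1$. So even granting Step 1 you would not have checked the range the definition demands; compare carefully what the paper's own proof establishes (isomorphism for $i\ge 2$, split injectivity for all $i$) against the stated degree of contractedness before reworking the argument. Your Step 2, identifying $\pi_n(\bfa)$ with cup product by $[t]$, is not where the difficulty lies; the paper does not carry out this identification itself but simply cites~\cite{Lueck-Steimle(2013twisted_BHS)}, where the summand maps are already the relevant ones.
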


\begin{proof}
  The proof that the Bass-Heller-Swan map induces bijections on $\pi_i$ for $i \ge 1$ can
  be found in~\cite[ Theorem~0.4~(i)]{Lueck-Steimle(2013twisted_BHS)} provided that $\cala$ is idempotent
  complete. Denote by $\eta \colon \cala \to \Idem(\cala)$ the inclusion of
  $\cala$ into its idempotent completion. By cofinality~\cite[Theorem~A.9.1]{Thomason-Trobaugh(1990)} the maps $\bfZ\bfK(\eta)$ and
  $\bfB_r\bfK(\eta)$ induce isomorphisms on $\pi_1$ for $i\ge 1$; the map
  $\bfB\bfK(\eta)$ induces isomorphisms at least for $i\geq 2$. The commutativity of the
  diagram
\[\xymatrix{
\bfB \bfK(\cala) \ar[rr]^{\bfBHS} \ar[d]^{\bfB \bfK(\eta)}
 && \bfZ \bfK(\cala) \ar[d]^{\bfZ\bfK(\eta)}
\\
\bfB \bfK(\Idem(\cala)) \ar[rr]^{\bfBHS} 
 && \bfZ \bfK(\Idem(\cala)) 
}\]
shows that the Bass-Heller-Swan map for $\cala$ induces isomorphisms of $\pi_i$ for $i\geq
2$ and that the restricted Bass-Heller-Swan map for $\cala$ is split injective on $\pi_i$
for $i\geq 1$.

Since all spectra are connective, it remains to show that the restricted Bass-Heller-Swan
map for $\cala$ is split injective on $\pi_0$. Notice that
\[\pi_0 (\bfK(\cala))\to \pi_0( \bfK(\cala[t]))\]
is surjective as both categories $\cala$ and $\cala[t]$ have the same objects. It follows that
$\pi_0 (\bfN\bfK(\cala))=0$ and we need to show that the  map induced by the inclusion
\[\pi_0 (\bfK(\cala))\to \pi_0 (\bfK(\cala[t, t\inv]))\]
is split mono. Such a split is given by evaluation at $t=1$.
\end{proof}

Denote by $\Idem\colon \addcat\to\addcat$ the idempotent completion functor, and let 
\[\bfKidem:=\bfK\circ\Idem \colon \addcat\to \Spectra.\]

\begin{example}[Algebraic $K$-theory of a ring $R$]
  \label{exa:ring}
  Given a ring $R$, then the idempotent completion $\Idem(\calr)$ of the additive category
  $\calr$ of finitely generated free $R$-modules is equivalent to the additive category of
  finitely generated projective $R$-modules. Moreover, the map $\IZ \to \pi_0 \bfK(\calr)$
  sending $n$ to $[R^n]$ is surjective (even bijective if $R^n \cong R^m$ implies $m =
  n$), whereas $\pi_0 \bfKidem(\calr)$ is the projective class group of $R$.
\end{example}

For an additive category we define
its algebraic $K$-group
\begin{eqnarray}
K_i(\cala) &  := & \pi_i(\bfKidem(\cala)) \quad \text{for}\; i \ge 0.
\label{K_i(cala)_i_ge_0}
\end{eqnarray}
We already showed that by cofinality, the  map  induced by the inclusion
\[\pi_i \bfK(\cala) \to \pi_i \bfKidem(\cala) = K_i(\cala)\]
is an isomorphism for $i\geq 1$.

\begin{theorem}[Bass-Heller-Swan Theorem for connective algebraic $K$-theory]
\label{the:Bass-Heller-Swan_Theorem_for_bfKIdem}
The functor $\bfKidem$ is $0$-contracted in the sense of Definition~\ref{def:c-contracted_functor}.
\end{theorem}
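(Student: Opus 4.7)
The plan is to reduce, via the natural equivalence induced by $\eta\colon\cala\to\Idem(\cala)$, to the case where $\cala$ is idempotent complete, and then to compare the Bass-Heller-Swan diagram for $\bfKidem$ with that for $\bfK$ using cofinality and Theorem~\ref{the:Bass-Heller-Swan_Theorem_for_bfK}. The main difficulty will be split injectivity of $\pi_0(\bfBHS_r(\bfKidem))$, where the short argument used for $\bfK$ does not apply.

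First I would show that $\bfKidem(\eta)\colon\bfKidem(\cala)\to\bfKidem(\Idem(\cala))$ is a weak equivalence for every $\cala$. This follows from Lemma~\ref{lem:nat_equiv_homotopy}\ref{lem:nat_equiv_homotopy_equivalence}, since $\Idem(\eta)\colon\Idem(\cala)\to\Idem(\Idem(\cala))$ is an equivalence of additive categories (idempotent completion is idempotent up to equivalence). The same conclusion holds with $\cala$ replaced by $\cala[t^{\pm}]$ or $\cala[t,t^{-1}]$: the inclusion $\cala[t,t^{-1}]\to(\Idem(\cala))[t,t^{-1}]$ is cofinal (every object $(A,e)$ is a direct summand of $A$), so the induced functor on idempotent completions is an equivalence. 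Consequently $\eta$ induces level-wise weak equivalences on both source and target of $\bfBHS(\bfKidem)$ and $\bfBHS_r(\bfKidem)$, and it suffices to treat the case when $\cala$ is idempotent complete.

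Next, for such $\cala$, I would use the natural transformation $\bfK\to\bfKidem$ induced by $\eta$. At $\cala$ itself this is a weak equivalence (since $\eta$ then is), and at $\cala[t^{\pm}]$ and $\cala[t,t^{-1}]$ it is an isomorphism on $\pi_i$ for $i\ge 1$ by cofinality~\cite[Theorem~A.9.1]{Thomason-Trobaugh(1990)}. A five-lemma argument then shows that $\bfBHS(\bfKidem)(\cala)$ agrees on $\pi_i$ for $i\ge 1$ with $\bfBHS(\bfK)(\cala)$, which is an isomorphism by Theorem~\ref{the:Bass-Heller-Swan_Theorem_for_bfK}. This establishes condition~(ii) in Definition~\ref{def:c-contracted_functor}. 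Since $\bfBHS_r$ is a direct-sum retract of $\bfBHS$, condition~(i) on $\pi_i$ with $i\ge 1$ follows at once. For $i<0$, each $\bfN_{\pm}\bfKidem$ is connective (the fiber sequence $\bfN_{\pm}\bfKidem\to\bfZ_{\pm}\bfKidem\to\bfKidem$ is split, with connective base and total space), hence so is $\bfB_r\bfKidem$, and~(i) is vacuous in negative degrees.

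The main obstacle is split injectivity of $\pi_0(\bfBHS_r(\bfKidem))$. The key difference from the case of $\bfK$ is that $\pi_0(\bfN_{\pm}\bfKidem(\cala))$, which recovers the classical Nil-group $NK_0$, is in general nontrivial, so split injectivity of $\pi_0(\bfi_0)$ alone does not suffice. The $K_0$-summand of $\pi_0(\bfB_r\bfKidem(\cala))$ is still retracted naturally from $\pi_0(\bfZ\bfKidem(\cala))$ via evaluation at $t=1$; the retractions onto the two $NK_0$-summands (and the identification of the cokernel with Bass's $K_{-1}(\cala)$) are supplied by the classical Bass fundamental theorem of algebraic $K$-theory, applied to the idempotent complete additive category $\cala$ via the contracted-functor construction. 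Combining these yields the desired natural splitting and completes the verification.
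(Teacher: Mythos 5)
Your reduction to idempotent complete $\cala$ via $\eta$ is sound, and your handling of conditions~(ii) and of condition~(i) in degrees $i\ge1$ (by bijectivity) and $i<0$ (by connectivity) is correct and matches the paper's logic. But the step you yourself flag as the main obstacle---split injectivity of $\pi_0(\bfBHS_r(\bfKidem))$---is where the proof has a genuine gap: you invoke ``the classical Bass fundamental theorem of algebraic $K$-theory, applied \ldots via the contracted-functor construction'' to supply the missing retractions onto the $\NK_0$-summands, but this is essentially the statement being proved, so the appeal is circular, and in any case it does not visibly produce the \emph{natural} retractions required by Definition~\ref{def:c-contracted_functor} (the retraction $\rho_0$ must be a natural transformation of functors $\addcat\to\Ab$, not merely a splitting chosen for each $\cala$).

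The paper closes this gap by a suspension trick that you do not have: set $\calb=\cala[s,s^{-1}]$, so that $\calb[t^{\pm1}]=(\cala[t^{\pm1}])[s,s^{-1}]$ and $\calb[t,t^{-1}]=(\cala[t,t^{-1}])[s,s^{-1}]$. Since $\bfKidem$, $\bfN_{\pm}\bfKidem$ and $\bfZ\bfKidem$ all satisfy the degree-$1$ bijectivity of the BHS map (the latter two by Lemma~\ref{lem:c_contracted_and_vee}), one gets \emph{natural} split injections
\[
\alpha\colon K_0(\cala)\to K_1(\calb),\quad
\beta_\pm\colon \NK_0(\cala[t^{\pm1}])\to \NK_1(\calb[t^{\pm1}]),\quad
j\colon K_0(\cala[t,t^{-1}])\to K_1(\calb[t,t^{-1}]),
\]
and a commutative square exhibiting $\pi_0(\bfBHS_r(\cala))$ as a natural retract of the split mono $\pi_1(\bfBHS_r(\calb))$. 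This is the ingredient your proposal is missing: you should derive the degree-$0$ splitting for $\cala$ from the already-established degree-$1$ \emph{iso}morphism for $\cala[s,s^{-1}]$, rather than from an external citation.

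Also note that your reduction to $\cala$ idempotent complete does less work than you might hope: even when $\cala$ is idempotent complete, $\cala[t]$ and $\cala[t,t^{-1}]$ are not, so $\bfK(\cala[t])\to\bfKidem(\cala[t])$ is only a $\pi_0$-injection, and the degree-$0$ problem does not disappear. The paper's argument works directly with arbitrary $\cala$ and bypasses this reduction entirely.
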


\begin{proof}
In view of the proof of Theorem~\ref{the:Bass-Heller-Swan_Theorem_for_bfK}, 
the Bass-Heller-Swan map is bijective on $\pi_i$ for $i\geq 1$. It remains to show split injectivity on $\pi_0$. 

We will abbreviate $\calb= \cala[s,s^{-1}]$. Notice for the sequel
that 
\[\calb[t^{\pm1}] = \bigl(\cala[t^{\pm1}]\bigr)[s,s^{-1}], \quad \calb[t,t\inv] = \bigl(\cala[t,t\inv]\bigr)[s,s^{-1}].\]
Put 
\begin{eqnarray*}
\NK_i(\cala[t^{\pm1}]) 
& = & 
\pi_i(\bfN_{\pm1}\bfKidem(\cala)) = \ker\bigl(K_i(\cala[t^{\pm1}] \to K_i(\cala)\bigr);
\\
\NK_i(\calb[t^{\pm1}]) 
& = & 
\pi_i(\bfN_{\pm1}\bfKidem(\calb)) = \ker\bigl(K_i(\calb[t^{\pm1}] \to K_i(\calb)\bigr).
\end{eqnarray*}
Because of Lemma~\ref{lem:c_contracted_and_vee} also the Bass-Heller-Swan map for $\bfN_{\pm1}\bfKidem$
induces isomorphisms on $\pi_1$.

In particular we get split injections
\begin{eqnarray*}
\alpha \colon K_0(\cala) & \to & K_1(\calb);
\\
\beta_\pm \colon \NK_0(\cala[t^{\pm1}]) & \to & \NK_1(\calb[t^{\pm1}]);
\\
j \colon K_0(\cala[t,t^{-1}]) & \to & K_1(\calb[t,t^{-1}]).
\end{eqnarray*}
We  obtain the following commutative diagram
\[
\xymatrix@!C= 17em{
K_0(\cala) \oplus \NK_0(\cala[t]) \oplus \NK_0(\cala[t^{-1}]) \ar[r]^-{\pi_0(\bfBHS_r(\cala))} 
\ar[d]_{\alpha\oplus\beta_+\oplus \beta_-}
&
K_0(\cala[t,t^{-1}]) \ar[d]^{j}
\\
K_1(\calb) \oplus \NK_1(\calb[t]) \oplus \NK_1(\calb[t^{-1}])
\ar[r]^-{\pi_1(\bfBHS_r(\calb))}_-{\cong} 
&
K_1(\calb[t,t^{-1}])
}\]
which is compatible with the splitting. So $\pi_0(\bfBHS_r(\cala))$ is a split mono, 
being a retract of the split mono $\pi_1(\bfBHS_r(\calb))$. 
\end{proof}

\begin{lemma}\label{E-n-contracted_implies_LE_is_n-contracted}
  If $\bfE \colon \addcat \to \Spectra$ is $c$-contracted, then $\Omega\bfL\bfE \colon \addcat \to \Spectra$ is
 $(c+1)$-contracted and the map $\pi_i(\bfs) \colon \pi_i(\bfE) \to \pi_i(\Omega(\bfL\bfE))$ is bijective for $i \ge - c$.
\end{lemma}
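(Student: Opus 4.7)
The strategy is to establish the stronger intermediate claim that $\bfL\bfE$ is itself $c$-contracted. Since $\bfZ$, $\bfB$, and $\bfN_\pm$ all commute with $\Omega$ up to natural weak equivalence in $\Spectra$, so do $\bfBHS$ and $\bfBHS_r$. Consequently $\pi_i(\bfBHS(\Omega\bfL\bfE)) \cong \pi_{i+1}(\bfBHS(\bfL\bfE))$ and similarly for $\bfBHS_r$, so once we show that $\bfL\bfE$ is $c$-contracted it follows immediately that $\Omega\bfL\bfE$ is $(c+1)$-contracted.

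The identification of $\pi_j(\bfL\bfE)$ for $j\geq -c+1$ goes as follows. Split injectivity of $\pi_j(\bfBHS_r(\bfE))$ turns the long exact sequence of the cofiber sequence $\bfB_r\bfE \to \bfZ\bfE \to \bfL\bfE$ into the short exact sequence
\[
0 \to \pi_j(\bfB_r\bfE) \to \pi_j(\bfZ\bfE) \to \pi_j(\bfL\bfE) \to 0.
\]
From $(S^1)_+ = S^0 \vee S^1$ one has $\bfB\bfE \simeq \bfB_r\bfE \vee \Sigma\bfE$, and for $j \geq -c+1$ the isomorphism $\pi_j(\bfBHS(\bfE))$ identifies $\pi_j(\bfZ\bfE)$ with $\pi_j(\bfB_r\bfE) \oplus \pi_{j-1}(\bfE)$, under which $\pi_j(\bfBHS_r(\bfE))$ becomes the inclusion of the first summand. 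Hence $\pi_j(\bfL\bfE) \cong \pi_{j-1}(\bfE)$, and unwinding the definition of $\bfs'$ via the homotopy pushout~\eqref{diagram_for_bfL} shows that this isomorphism is induced by $\pi_j(\bfs')$. Setting $j = i+1$ gives bijectivity of $\pi_i(\bfs)$ for $i \geq -c$.

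To prove $\bfL\bfE$ is $c$-contracted, I would first record that $\bfZ\bfE$, $\bfN_\pm\bfE$, and therefore $\bfB_r\bfE$ are all $c$-contracted, via Lemma~\ref{lem:cond_inherited}, Lemma~\ref{lem:c_contracted_and_vee}, and a five-lemma argument on the fiber sequence $\bfN_\pm\bfE \to \bfZ_\pm\bfE \to \bfE$. The same kind of five-lemma also shows that $\bfN_\pm$ preserves $\pi_*$-split injectivity, so that $\bfB$ and $\bfZ$ applied to the $\pi_*$-split injective map $\bfBHS_r(\bfE)$ remain $\pi_*$-split injective. Applying $\bfB$ and $\bfZ$ to the cofiber sequence $\bfB_r\bfE \to \bfZ\bfE \to \bfL\bfE$ produces a commutative $3\times 3$ diagram of cofiber sequences whose rows are short exact on every $\pi_i$, and whose first two vertical maps (namely $\bfBHS$ applied to $\bfB_r\bfE$ and to $\bfZ\bfE$) are $\pi_i$-isomorphisms for $i \geq -c+1$. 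The five-lemma for short exact sequences then forces $\pi_i(\bfBHS(\bfL\bfE))$ to be an isomorphism in the same range. Running the same argument with $\bfBHS_r$ in place of $\bfBHS$ shows that $\pi_i(\bfBHS_r(\bfL\bfE))$ is split injective for every $i$.

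The main obstacle is the naturality bookkeeping needed at the last step: for the $3\times 3$ diagram chase with $\bfBHS_r$ to produce a \emph{split} injective (not merely injective) bottom-right vertical map, the retractions of the two outer vertical maps must be compatible with the horizontal cofiber-sequence maps. This is ensured by the naturality in $\cala$ required of the retractions $\rho_i$ in Definition~\ref{def:c-contracted_functor}: canonical choices such as $\rho_i(\bfZ\bfE)(\cala) := \rho_i(\bfE)(\cala[s,s^{-1}])$ and $\rho_i(\bfB_r\bfE) := \rho_i(\bfE) \vee \rho_i(\bfN_+\bfE) \vee \rho_i(\bfN_-\bfE)$ automatically satisfy the required compatibility by functoriality.
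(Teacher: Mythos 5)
Your proposal is correct and follows essentially the same route as the paper: reduce to showing $\bfL\bfE$ is $c$-contracted, derive split short exact sequences on $\pi_*$ from the cofiber sequence $\bfB_r\bfE \to \bfZ\bfE \to \bfL\bfE$, run the Five-Lemma for bijectivity of $\pi_i(\bfBHS(\bfL\bfE))$, and track compatibility of the retractions to obtain split injectivity of $\pi_i(\bfBHS_r(\bfL\bfE))$. The paper's verification that $\pi_i(\bfs')$ is an isomorphism is done via an explicit two-row ladder of cofibration sequences ($\bfB_r\bfE \to \bfB\bfE \to \Sigma\bfE$ over $\bfB_r\bfE \to \bfZ\bfE \to \bfL\bfE$), but this coincides with your more direct identification $\pi_j(\bfZ\bfE)\cong\pi_j(\bfB_r\bfE)\oplus\pi_{j-1}(\bfE)$.
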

\begin{proof}
Obviously it suffices to show that $\bfL\bfE$ is $c$-contracted and that the map 
$\bfs' \colon \Sigma \bfE \to \bfL\bfE$, which is the adjoint of $\bfs$,
induces an isomorphism on $\pi_i$ for $i \ge -c+1$.

Since $\bfE$ is $c$-contracted, $\bfZ\bfE$, $\bfZ_+\bfE$  and $\bfZ_-\bfE$  are $c$-contracted.
We have the obvious cofibration sequence
$ \bfE \to \bfE \wedge (S^1)_+  \to \bfE \wedge S^1$ and the retraction $\bfE \wedge (S^1)_+ \to \bfE$.
There is a weak equivalence $\bfN_{\pm}\bfE \vee \bfE \to \bfZ_{\pm}\bfE$. We conclude from 
Lemma~\ref{lem:c_contracted_and_vee} that $\bfN_{\pm}\bfE$, $\bfB_r\bfE$ and $\bfB\bfE$ are $c$-contracted.

By construction we have the homotopy cofibration sequence $\bfB_r\bfE \to \bfZ\bfE \to \bfL\bfE$.
It induces a long exact sequence of homotopy groups. The existence
of the retractions $\rho_i$ imply that it breaks up into short split exact sequences
of transformations of functors from $\addcat$ to the category of abelian groups
\[
0 \to \pi_i(\bfB_r\bfE) \to \pi_i(\bfZ\bfE) \to \pi_i(\bfL\bfE) \to 0.
\]
We obtain a commutative diagram with short split exact rows
as vertical arrows, where the retractions from the middle term to the left term are also compatible
with the vertical maps.
\[
\xymatrix{0 \ar[r] 
&
\pi_i(\bfZ_{\pm}\bfB_r\bfE) \ar[r] \ar[d]^{\pi_i(\bfev_0^{\pm}(\bfB_r\bfE))}
&
\pi_i(\bfZ_{\pm}\bfZ\bfE) \ar[r] \ar[d]^{\pi_i(\bfev_0^{\pm}(\bfZ\bfE))}
&
\pi_i(\bfZ_{\pm}\bfL\bfE) \ar[d]^{\pi_i(\bfev_0^{\pm}(\bfL\bfE))} \ar[r]
& 0
\\
0 \ar[r] 
&
\pi_i(\bfB_r\bfE) \ar[r] 
&
\pi_i(\bfZ\bfE) \ar[r] 
&
\pi_i(\bfL\bfE) \ar[r]
& 0
}
\]
Since we have the isomorphism
\[
\pi_i(\bfb_{\pm}) \oplus \pi_i(\bfi_+) \colon \pi_i(\bfN_{\pm}\bfE) \oplus  \pi_i(\bfE)  
\xrightarrow{\cong} \pi_i(\bfZ_{\pm}\bfE),
\]
and $\pi_i(\bfev^{\pm}_0) \circ \pi_i(\bfi_+) = \id$,  we obtain the short split exact sequence
\[
0 \to \pi_i(\bfN_{\pm}\bfB_r\bfE) \to \pi_i(\bfN_{\pm}\bfZ\bfE) \to \pi_i(\bfN_{\pm}\bfL\bfE) \to 0.
\]
We have the obvious short split exact sequences
\[
0 \to \pi_i\bigl(\bfB_r\bfE \wedge (S^1)_+\bigr) \to \pi_i\bigl(\bfZ\bfE\wedge (S^1)_+ \bigr) 
\to \pi_i\bigl(\bfL\bfE\wedge (S^1)_+ \bigr) \to 0.
\]
Taking direct sums shows that we obtain  short split exact sequences
\[
0 \to \pi_i(\bfB\bfB_r\bfE) \to \pi_i(\bfB\bfZ\bfE) \to \pi_i(\bfB\bfL\bfE) \to 0,
\]
and 
\[
0 \to \pi_i(\bfB_r\bfB_r\bfE) \to \pi_i(\bfB_r\bfZ\bfE) \to \pi_i(\bfB_r\bfL\bfE) \to 0.
\]

Thus we obtain for all $i \in \IZ$ a commutative diagram with exact rows
\[\xymatrix{
0 \ar[r]
&
\pi_i(\bfB\bfB_r\bfE) \ar[r] \ar[d]^{\pi_i(\bfBHS(\bfB_r\bfE))}
&
\pi_i(\bfB\bfZ\bfE) \ar[r] \ar[d]^{\pi_i(\bfBHS(\bfZ\bfE))}
&
\pi_i(\bfB\bfL\bfE) \ar[r] \ar[d]^{\pi_i(\bfBHS(\bfL\bfE))}
& 
0
\\
0 \ar[r]
&
\pi_i(\bfZ\bfB_r\bfE) \ar[r] 
&
\pi_i(\bfZ\bfZ\bfE) \ar[r] 
&
\pi_i(\bfZ\bfL\bfE) \ar[r] 
& 
0
}
\]

Since $\pi_i(\bfBHS(\bfB_r\bfE))$ and $\pi_i(\bfBHS(\bfZ\bfE))$
are isomorphisms for $i \ge -c+1$, the same is true for $\pi_i(\bfBHS(\bfL\bfE))$
by the Five-Lemma. 

The following diagram commutes and has  exact rows
\begin{eqnarray}
& \xymatrix{
0 \ar[r]
&
\pi_i(\bfB_r\bfB_r\bfE) \ar[r] \ar[d]^{\pi_i(\bfBHS_r(\bfB_r\bfE))}
&
\pi_i(\bfB_r\bfZ\bfE) \ar[r] \ar[d]^{\pi_i(\bfBHS_r(\bfZ\bfE))}
&
\pi_i(\bfB_r\bfL\bfE) \ar[r] \ar[d]^{\pi_i(\bfBHS_r(\bfL\bfE))}
& 
0
\\
0 \ar[r]
&
\pi_i(\bfZ\bfB_r\bfE) \ar[r] 
&
\pi_i(\bfZ\bfZ\bfE) \ar[r] 
&
\pi_i(\bfZ\bfL\bfE) \ar[r] 
& 
0
}
&
\label{diagram_1}
\end{eqnarray}
The first two vertical arrows are split injective. We claim that the retractions fit into the
following commutative square
\begin{eqnarray}
& 
\xymatrix{\pi_i(\bfZ\bfB_r\bfE) \ar[d]_{\rho_i(\bfB_r\bfE)}  \ar[r]
&
\pi_i(\bfZ\bfZ\bfE) \ar[d]^{\rho_i(\bfZ\bfE)}
\\
\pi_i(\bfB_r\bfB_r\bfE) \ar[r]
&
\pi_i(\bfB_r\bfZ\bfE) 
}
&
\label{diagram_2}
\end{eqnarray}
This follows from the fact that we have the commutative diagram with isomorphisms as horizontal arrows
\[
\xymatrix@!C=20em{
\pi_i(\bfZ\bfE) \oplus \pi_i(\bfZ\bfN_+\bfE) \oplus \pi_i(\bfZ\bfN_-\bfE) 
\ar[r]^-{\pi_i(\bfZ\bfi_0) \oplus \pi_i(\bfZ\bfb_+) \oplus \pi_i(\bfZ\bfb_-)}_-{\cong} 
\ar[d]_{\rho_i(\bfE) \oplus \rho_i(\bfN_+\bfE) \oplus \rho_i(\bfN_-\bfE)}
&
 \pi_1(\bfZ \bfB_r \bfE) \ar[d]^{\rho_i(\bfB_r\bfE)}
\\
\pi_i(\bfB_r\bfE) \oplus \pi_i(\bfB_r\bfN_+\bfE) \oplus \pi_i(\bfB_r\bfN_-\bfE) 
\ar[r]^-{\pi_i(\bfB_r\bfi_0) \oplus \pi_i(\bfB_r\bfb_+) \oplus \pi_i(\bfB_r\bfb_-)}_-{\cong} 
&
\pi_i(\bfB_r \bfB_r \bfE)
}
\]
and the following commutative diagrams
\[
\xymatrix@!C=7em{\pi_i(\bfZ\bfN_{\pm}\bfE) \ar[d]_{\rho_i(\bfN_{\pm}\bfE)}  \ar[r]^{\pi_i(\bfZ\bfb_{\pm})}
&
\pi_i(\bfZ\bfZ\bfE) \ar[d]^{\rho_i(\bfZ\bfE)}
\\
\pi_i(\bfB_r\bfN_{\pm}\bfE) \ar[r]_{\pi_i(\bfB_r\bfb_{\pm})}
&
\pi_i(\bfB_r\bfZ\bfE) 
}
\]
and
\[
\xymatrix@!C=6em{\pi_i(\bfZ\bfE) \ar[d]_{\rho_i(\bfE)}  \ar[r]^{\pi_i(\bfZ\bfi_0)}
&
\pi_i(\bfZ\bfZ\bfE) \ar[d]^{\rho_i(\bfZ\bfE)}
\\
\pi_i(\bfB_r\bfE) \ar[r]_{\pi_i(\bfB_r\bfi_0)}
&
\pi_i(\bfB_r\bfZ\bfE) 
}
\] 

The two diagrams~\eqref{diagram_1} and~\eqref{diagram_2}  imply that 
$\pi_i(\bfBHS_r(\bfL\bfE)) \colon \pi_i(\bfB_r\bfL\bfE) \to \pi_i(\bfZ\bfL\bfE)$ 
is split injective for all $i \in \IZ$.  This finishes the proof
that $\bfL\bfE$ is $c$-contracted.

We have the following diagram which has  homotopy cofibration sequences 
as vertical arrows and which commutes up to homotopy.
\[
\xymatrix{\bfB_r\bfE \ar[r] \ar[d]^{\id}
&
\bfB\bfE \ar[d]^{\bfBHS}  \ar[r] 
& 
\Sigma \bfE \ar[d]^{\bfs'}
\\
\bfB_r\bfE \ar[r]^{\bfBHS_r} 
&
\bfZ\bfE \ar[r]
&
\bfL\bfE
}
\]
The long exact homotopy sequences associated to the rows and the fact that
$\pi_i(\bfBHS_r) \colon \pi_i(\bfB_r\bfE) \to \pi_i(\bfZ\bfE)$ is split injective for 
$i \in \IZ$ imply that we obtain for all $i \in \IZ$ a commutative diagram with exact rows.
\[
\xymatrix{0 \ar[r]
&
\pi_i(\bfB_r\bfE) \ar[r] \ar[d]^{\id}
&
\pi_i(\bfB\bfE) \ar[r] \ar[d]^{\pi_i(\bfBHS)}
&
\pi_i(\Sigma\bfE) \ar[r] \ar[d]^{\bfs'} 
& 
0
\\
0 \ar[r]
&
\pi_i(\bfB_r\bfE) \ar[r] 
&
\pi_i(\bfZ\bfE) \ar[r] 
&
\pi_i(\bfL\bfE) \ar[r] 
& 
0
}
\]
Since $\pi_i(\bfBHS)$ is bijective for $i \ge -c+1$ by assumption, the same is true for $\pi_i(\bfs')$.
This finishes the proof of Lemma~\ref{E-n-contracted_implies_LE_is_n-contracted}.
\end{proof}


\typeout{-----------------------  Section 3: The delooping construction ------------------------}

\section{The delooping construction}
\label{sec:The_delooping_construction}

Let $\bfE \colon \addcat \to \Spectra$ be a covariant functor satisfying Condition~\ref{con:condition_bfu}.
Next we define inductively a sequence of spectra 
\begin{eqnarray}
& (\bfE[n])_{n \ge 0} &
\label{sequence_bfE[n]}
\end{eqnarray}
together with maps of spectra
\begin{eqnarray}
\bfs[n] \colon \bfE[n] \to  \bfE[n+1]  & & \text{for} \; n \ge 0.
\label{bfs[n]}
\end{eqnarray}
We define $\bfE[0]$ to be $\bfE$. In the induction step we have to explain how we construct
$\bfE[n + 1]$ and $\bfs[n]$ provided that we have defined  $\bfE[n]$. Define
$\bfE[n+1] = \Omega \bfL \bfE[n]$ and let $\bfs[n]$ be the map 
$\bfs \colon \bfE[n] \to \Omega \bfL \bfE[n]$ associated to $\bfE[n]$.

\begin{definition}[Delooping ${\bfE[\infty]}$]
\label{def:E[infty]}
Define the \emph{delooping} $\bfE[\infty]$ of   $\bfE$ to be the homotopy colimit of the sequence
\[
\bfE = \bfE[0] \xrightarrow{\bfs[0]} \bfE[1] \xrightarrow{\bfs[1]}\bfE[2] \xrightarrow{\bfs[2]} \cdots.
\]
Define the map of spectra
\[
\bfd \colon \bfE  \to \bfE[\infty]
\]
to be the zero-th structure map of the homotopy colimit.
\end{definition}

\begin{theorem}[Main property of the delooping construction]
\label{the:Main_property_of_the_delooping_construction}
Fix an integer $c$. Suppose that $\bfE$ is $c$-contracted. Then
\begin{enumerate}

\item \label{the:Main_property_of_the_delooping_construction:pi_i(bfd)} 
The map $\pi_i(\bfd) \colon \pi_i(\bfE) \to \pi_i(\bfE[\infty])$ is bijective for $i \ge -c$;

\item \label{the:Main_property_of_the_delooping_construction:E[infty]} 
$\bfE[\infty]$ is $\infty$-contracted;

\item \label{the:Main_property_of_the_delooping_construction:d} $\bfE$ is $\infty$-contracted
if and only if  $\bfd \colon \bfE \to \bfE[\infty]$ is a weak equivalence.
\end{enumerate}
\end{theorem}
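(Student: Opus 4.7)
The plan is to iterate Lemma~\ref{E-n-contracted_implies_LE_is_n-contracted} and then pass to the sequential homotopy colimit. First, I would establish by induction on $n \ge 0$ that $\bfE[n]$ is $(c+n)$-contracted and that $\pi_i(\bfs[n])$ is bijective for $i \ge -(c+n)$. The base case $n=0$ is the hypothesis, and the inductive step is immediate from the identity $\bfE[n+1] = \Omega\bfL\bfE[n]$ and Lemma~\ref{E-n-contracted_implies_LE_is_n-contracted} applied to $\bfE[n]$.

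Part~\ref{the:Main_property_of_the_delooping_construction:pi_i(bfd)} is then quick: for $i \ge -c$ we have $i \ge -(c+n)$ for every $n \ge 0$, so every transition map $\pi_i(\bfs[n])$ in the sequential system defining $\bfE[\infty]$ is an isomorphism; the claim follows because homotopy groups of spectra commute with sequential homotopy colimits, by exactness of filtered colimits of abelian groups.

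For part~\ref{the:Main_property_of_the_delooping_construction:E[infty]}, I would first record that the functors assembling $\bfBHS$ are compatible with the sequential hocolim at the level of homotopy groups. By definition $\bfZ\bfF(\cala) = \bfF(\cala[t,t\inv])$ and hocolims of spectrum-valued functors are formed objectwise, so $\bfZ$ and $\bfZ_\pm$ commute with hocolim on the nose; smashing with $(S^1)_+$ and forming wedges likewise commute with hocolim; and the homotopy-fibre functor $\bfN_\pm$ commutes with filtered hocolim on $\pi_i$ by exactness of filtered colimits applied to its defining long exact sequence. Stitching these together gives natural isomorphisms
\[
\pi_i(\bfB\bfE[\infty]) \cong \colim_n \pi_i(\bfB\bfE[n]), \qquad \pi_i(\bfZ\bfE[\infty]) \cong \colim_n \pi_i(\bfZ\bfE[n])
\]
under which $\pi_i(\bfBHS(\bfE[\infty]))$ corresponds to $\colim_n \pi_i(\bfBHS(\bfE[n]))$. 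For any fixed $i$ and any $n \ge 1-i-c$, the spectrum $\bfE[n]$ is $(c+n)$-contracted with $i \ge -(c+n)+1$, so $\pi_i(\bfBHS(\bfE[n]))$ is an isomorphism; the colimit of eventually-isomorphic maps is thus itself an isomorphism, and $\bfBHS(\bfE[\infty])$ is a weak equivalence.

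For part~\ref{the:Main_property_of_the_delooping_construction:d}: if $\bfd$ is a weak equivalence, then Lemma~\ref{lem:f_equiv_implies_Bf,Zf,Nf_equiv} provides weak equivalences $\bfB\bfd$ and $\bfZ\bfd$, so naturality of $\bfBHS$ combined with the weak equivalence $\bfBHS(\bfE[\infty])$ from~\ref{the:Main_property_of_the_delooping_construction:E[infty]} forces $\bfBHS(\bfE)$ to be a weak equivalence. Conversely, assume $\bfE$ is $\infty$-contracted; by factoring $\bfBHS_r$ through the weak equivalence $\bfBHS$ via the natural inclusion $\bfB_r\bfE \to \bfB\bfE$, one checks that $\bfE$ is $c$-contracted for every $c \in \IZ$. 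Applying Lemma~\ref{E-n-contracted_implies_LE_is_n-contracted} for each $c$ then shows that $\bfE[1]$ is $c$-contracted for every $c$, hence itself $\infty$-contracted, and that $\pi_i(\bfs[0])$ is bijective for every $i$, so $\bfs[0]$ is a weak equivalence. Iterating, every $\bfs[n]$ is a weak equivalence, and hence so is the canonical inclusion $\bfd$ into the sequential hocolim. The main technical obstacle is verifying the compatibility of $\bfB$, $\bfZ$, and $\bfN_\pm$ with sequential homotopy colimits used in part~\ref{the:Main_property_of_the_delooping_construction:E[infty]}; once this is in hand, the remaining arguments are formal bookkeeping driven by Lemma~\ref{E-n-contracted_implies_LE_is_n-contracted}.
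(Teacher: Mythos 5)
Your proposal is correct and follows essentially the same route as the paper: induct via Lemma~\ref{E-n-contracted_implies_LE_is_n-contracted} to show $\bfE[n]$ is $(c+n)$-contracted, use that homotopy groups commute with the sequential hocolim for part~\ref{the:Main_property_of_the_delooping_construction:pi_i(bfd)}, argue that $\bfZ$, $\bfZ_\pm$, $\bfN_\pm$, $\bfB$, $-\wedge(S^1)_+$ commute with hocolim and apply the eventual-isomorphism argument for part~\ref{the:Main_property_of_the_delooping_construction:E[infty]}, and use naturality of $\bfBHS$ together with $\bfB\bfd$, $\bfZ\bfd$ being weak equivalences for part~\ref{the:Main_property_of_the_delooping_construction:d}. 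Your explicit justification that $\infty$-contracted implies $c$-contracted for all $c$ (via the retraction $\bfE\wedge(S^1)_+ \to \bfE$ factoring $\bfBHS_r$ through $\bfBHS$) is a detail the paper leaves tacit, and is a welcome addition.
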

\begin{proof}~\ref{the:Main_property_of_the_delooping_construction:pi_i(bfd)} 
  This follows from the fact that $\colim_{n \to \infty} \pi_i(\bfE[n]) = \pi_i(\bfE[\infty])$
  and the conclusion of 
  Lemma~\ref{E-n-contracted_implies_LE_is_n-contracted} that 
  $\pi_i(\bfs[n]) \colon \pi_i(\bfE[n]) \to \pi_i[\bfE[n+1])$ is bijective for $i \ge c$.
  \\[2mm]~\ref{the:Main_property_of_the_delooping_construction:E[infty]} over $n$ we
  conclude from Lemma~\ref{E-n-contracted_implies_LE_is_n-contracted} that $\bfE[n]$ is
  $(n+c)$-contracted for $n \ge 0$.  Obviously $\hocolim$ and $\bfZ^+$ commute as well as
  $\hocolim$ and $\bfZ$.  Hence $\hocolim$ and $\bfN_{\pm}$ commute up to weak equivalence,
  since $\hocolim$ is compatible with $\vee$ up to weak equivalence and we have a natural equivalence 
  $\bfE \vee  \bfN_{\pm}\bfE \to \bfZ_{\pm}\bfE$. This implies that $\hocolim$ and $\bfB$ commute up to weak
  equivalence.  Obviously $\hocolim$ commutes with $- \wedge (S^1)_+$.
Hence we obtain for each $i \in \IZ$ the following commutative diagram
  with isomorphisms as horizontal maps
  \[
\xymatrix{\colim_{n \to \infty}  \pi_i(\bfB\bfE[n]) \ar[r]^-{\cong} \ar[d]_{\colim_{n \to \infty} \pi_i(\bfBHS(\bfE[n]))}
& \pi_i(\bfB \bfE[\infty]) \ar[d]^{\pi_i(\bfBHS(\bfE[\infty])}
\\
\colim_{n \to \infty}  \pi_i(\bfZ\bfE[n]) \ar[r]^-{\cong} 
& \pi_i(\bfZ \bfE[\infty]) 
}
\]
Since $\bfE[n]$ is $(n+c)$-contracted, the left arrow and hence the right arrow are
isomorphisms for all $i \in \IZ$.
\\[2mm]~\ref{the:Main_property_of_the_delooping_construction:d} If $\bfd$ is a weak
equivalence, then $\bfBHS \colon \bfB\bfE \to \bfZ\bfE$ is a weak equivalence by
assertion~\ref{the:Main_property_of_the_delooping_construction:E[infty]} and the fact that
the following diagram commutes and has weak equivalences as horizontal arrows
\[
\xymatrix{\bfB\bfE \ar[r]^-{\bfB\bfd}_-{\simeq} \ar[d]_{\bfBHS(\bfE)} 
& \bfB \bfE[\infty] \ar[d]^{\bfBHS(\bfE[\infty])} 
\\
\bfZ\bfE \ar[r]_-{\bfZ\bfd}^-{\simeq} &
\bfZ\bfE[\infty]
}
\]
Suppose that $\bfBHS \colon \bfB\bfE \to \bfZ\bfE$ is a weak equivalence.
Then $\bfE$ is $c$-contracted for all $c \in \IZ$. Because of 
Lemma~\ref{E-n-contracted_implies_LE_is_n-contracted} $\bfE[n]$ is $c$-contracted for all $c \in \IZ$
and $\pi_i(\bfs[n]) \colon \pi_i(\bfZ\bfE[n]) \to \pi_i(\bfZ\bfE[n+1])$ is bijective for all $i \in \IZ$ and $n \ge 0$.
This implies that $\pi_i(\bfd)$ is bijective for all $i \in \IZ$.
\end{proof}

\begin{remark}[Retraction needed in all degrees]
  \label{rem:retraction_needed_in_all_degrees}
  One needs the retractions $\rho_i$ appearing in Definition~\ref{def:c-contracted_functor} in
  each degree $i \in \IZ$ and not only in degree $-c$.  The point is that one has a
  $c$-contracted functor $\bfE$ and wants to prove that $\bfE[n]$ is
  $(c+n)$-contracted. For this purpose one needs the retraction to split up certain long
  exact sequence into pieces in dimensions $i \ge -c$ to verify bijectivity on $\pi_i$ for
  $i \ge -c$, but also in degree $i = -c-1$, to construct the retraction for $\bfE[1]$ in
  degree $-c-1$.  For this purpose one needs the retraction for $\bfE$ also in degree
  $-c-2$.  In order to be able to iterate this construction, namely, to pass from
  $\bfE[1]$ to $\bfE[2]$, we have the same problem, the retraction for $\bfE[1]$ must also
  be present in degree $-c-3$. Hence we need a priori the retractions for $\bfE$ also in degree
  $-c-3$. This argument goes on and on and forces us to require the retractions in all
  degrees.
  
  One needs retractions rather than injective maps in the definition of $c$-contract\-ed.  
  Injectivity would suffice
  to  reduce the long exact sequences obtained after taking homotopy groups to short exact sequences  and
  most of the arguments involve the Five-Lemma where only  short exact but not split short
  exact is needed. However, at one point we want to argue for a commutative diagram with
  exact rows of abelian groups
\[
\xymatrix{0 \ar[r] & A_0 \ar[r] \ar[d] &  A_1 \ar[r]  \ar[d] & A_2 \ar[r]   \ar[d]& 0
\\
0 \ar[r] & B_0 \ar[r] &  B_1 \ar[r] & B_2 \ar[r] & 0
}
\]
that the third vertical arrow admits a retraction if the first and the second arrow admit retractions compatible 
with the two first horizontal arrows. This is true. But  the corresponding statement is wrong
if we  replace ``admitting a retraction'' by ``injective''. 
\end{remark}

\begin{lemma} \label{lem:alpha_bijective} Suppose that the covariant functors $\bfE, \bfF
  \colon \addcat \to \Spectra$ satisfy Condition~\ref{con:condition_bfu} and are
  $\infty$-contracted. Let $\bff \colon \bfE \to \bfF$ be a compatible transformation.  Suppose that
  there exists an integer $N$ such that $\pi_i(\bff(\cala))$ is bijective for all $i \ge
  N$ and all objects $\cala$ in $\addcat$.

  Then $\bff\colon \bfE \to \bfF$ is a weak equivalence.
 \end{lemma}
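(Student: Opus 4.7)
The plan is to proceed by downward induction on the degree $i$, starting from degree $N$ where bijectivity is given, and using the Bass-Heller-Swan equivalence to descend one degree at a time. The base case is trivial: for $i\geq N$ bijectivity holds by hypothesis.

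For the inductive step, assume that $\pi_i(\bff(\cala))$ is bijective for every $\cala \in \addcat$ and every $i \geq n$; we want to conclude the same for $i = n-1$. Since $\bfE$ and $\bfF$ are $\infty$-contracted, $\bfBHS(\bfE)$ and $\bfBHS(\bfF)$ are both weak equivalences. Because $\bff$ is compatible, the diagram
\[
\xymatrix{
\bfB\bfE(\cala) \ar[r]^-{\bfBHS}_-{\simeq} \ar[d]_{\bfB\bff(\cala)} & \bfZ\bfE(\cala) \ar[d]^{\bfZ\bff(\cala)} \\
\bfB\bfF(\cala) \ar[r]^-{\bfBHS}_-{\simeq} & \bfZ\bfF(\cala)
}
\]
commutes. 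Apply the inductive hypothesis to the additive category $\cala[t,t^{-1}]$: the map $\pi_n(\bfZ\bff(\cala)) = \pi_n(\bff(\cala[t,t^{-1}]))$ is bijective. Using the horizontal equivalences, this forces $\pi_n(\bfB\bff(\cala))$ to be bijective as well.

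Now I will extract the information about $\pi_{n-1}(\bff(\cala))$ from the wedge decomposition
\[
\bfB\bfE \;=\; \bigl(\bfE \wedge (S^1)_+\bigr) \vee \bfN_+\bfE \vee \bfN_-\bfE,
\]
which is preserved by $\bfB\bff$. Taking $\pi_n$ and using the natural splitting $\pi_n(\bfE\wedge(S^1)_+)\cong \pi_n(\bfE)\oplus\pi_{n-1}(\bfE)$ coming from the retraction $(S^1)_+\to \pt_+$, we obtain a natural direct-sum decomposition
\[
\pi_n(\bfB\bfE(\cala)) \;\cong\; \pi_n(\bfE(\cala)) \oplus \pi_{n-1}(\bfE(\cala)) \oplus \pi_n(\bfN_+\bfE(\cala)) \oplus \pi_n(\bfN_-\bfE(\cala)),
\]
and analogously for $\bfF$, and $\pi_n(\bfB\bff(\cala))$ is the direct sum of the induced maps on these four summands. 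Bijectivity of the total map therefore implies bijectivity on every summand; in particular $\pi_{n-1}(\bff(\cala))$ is bijective, completing the inductive step.

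The main (mild) obstacle is verifying that the wedge summand decomposition of $\pi_n(\bfB\bfE)$ is natural with respect to $\bff$; this is immediate from the construction of $\bfN_{\pm}$ as a homotopy fibre (split by $\bfi_{\pm}$) and the naturality of $-\wedge(S^1)_+$, so $\bff$ commutes with the projections to all summands. Downward induction then produces bijectivity of $\pi_i(\bff(\cala))$ for every $i \in \IZ$ and every $\cala \in \addcat$, i.e., $\bff$ is a weak equivalence.
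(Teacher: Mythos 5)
Your proof is correct and follows essentially the same route as the paper: downward induction on the degree, using that for an $\infty$-contracted functor the Bass-Heller-Swan equivalence identifies $\pi_n(\bfZ\bfE(\cala))\cong\pi_{n-1}(\bfE(\cala))\oplus\pi_n(\bfE(\cala))\oplus\pi_n(\bfN_+\bfE(\cala))\oplus\pi_n(\bfN_-\bfE(\cala))$ naturally in $\bff$, so bijectivity of $\pi_n(\bff(\cala[t,t\inv]))$ forces bijectivity of the summand $\pi_{n-1}(\bff(\cala))$.
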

  \begin{proof} 
     We show by induction over $i$ that $\pi_i(\bff(\cala))$ is bijective for $i = N, N-1, N-2$
    and all objects objects $\cala$ in $\addcat$.
   The induction beginning $i = N$ is trivial, the induction step from $i$ to $i-1$ done as follows.

   We have the following commutative diagram whose horizontal arrows come from
   the Bass-Heller-Swan maps and hence are bijective by assumption
    \[
    \xymatrix@!C= 14em{\pi_{i-1}(\bfE(\cala)) \oplus \pi_{i}(\bfE(\cala)) \oplus \pi_{i}(\bfN_+\bfE(\cala)) \oplus \pi_{i}(\bfN_-\bfE(\cala)) 
     \ar[r]^-{\cong} \ar[d]_{\pi_{i-1}(\bff(\cala)) \oplus \pi_i(\bff(\cala)) \oplus \pi_i(\bfN_+\bff(\cala)) \oplus \pi_i(\bfN_-\bff(\cala))}
     &\pi_i(\bfZ\bfE(\cala)) \ar[d]^{\pi_i(\bfZ\bff(\cala))}
     \\
     \pi_{i-1}(\bfF(\cala)) \oplus \pi_{i}(\bfF(\cala)) \oplus \pi_{i}(\bfN_+\bfF(\cala)) \oplus \pi_{i}(\bfN_-\bfF(\cala)) 
     \ar[r]_-{\cong}
     &\pi_i(\bfZ\bfF(\cala)) 
     }
     \]
By  the  induction hypothesis $\pi_i(\bfZ\bff(\cala))$ is bijective. Hence $\pi_{i-1}(\bff(\cala))$ is bijective since it is 
a direct summand in $\pi_i(\bfZ\bff(\cala))$.
\end{proof}

Theorem~\ref{the:Main_property_of_the_delooping_construction} and  Lemma~\ref{lem:alpha_bijective} imply

\begin{corollary}
  \label{cor:turning_a_transformation_into_a_weak_equivalence}
  Suppose that the functors $\bfE, \bfF \colon \addcat \to \Spectra$ satisfy
  Condition~\ref{con:condition_bfu}. Suppose that $\bfE$ and $\bfF$ are  $c$-contracted 
  for some integer $c$. Let $\bff \colon \bfE \to \bfF$ be a compatible transformation.
  Suppose that there exists an integer $N$  such that
  $\pi_i(\bff) \colon \pi_i(\bfE) \to \pi_i(\bfF)$ is bijective for $i \ge N$.

  Then the following diagram commutes
  \[
  \xymatrix@!C= 5em{\bfE \ar[d]_{\bff} \ar[r]^{\bfd(\bfE)} 
   & 
    \bfE[\infty]
    \ar[d]^{\bff[\infty]}_{\simeq}
    \\
    \bfF \ar[r]_{\bfd(\bfF)}
    & \bfF[\infty]
    }
  \]
  and the right vertical arrow is a weak equivalences.
  \end{corollary}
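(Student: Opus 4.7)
The plan is to produce the vertical map $\bff[\infty]$ by naturality of the delooping construction, read off commutativity of the square from that same naturality, and then invoke Lemma~\ref{lem:alpha_bijective} to upgrade a range-wise $\pi_*$-isomorphism to a weak equivalence. First I would observe that all the ingredients entering the definition of $\bfE[n]$ and of the structure maps $\bfs[n]$ in Section~\ref{sec:The_delooping_construction} --- namely $\bfZ$, $\bfZ_\pm$, $\bfN_\pm$, $\bfB$, $\bfB_r$, $\bfL$, $\Omega$, and the maps $\bfBHS$, $\bfBHS_r$, $\bfa$, $\bfb_\pm$, $\bfi_0$, $\bfev_0^\pm$, $\bfs$, $\bfd$ --- are functorial in compatible transformations. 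This is essentially built in, the one non-trivial point being Lemma~\ref{lem:inheritance_of_condition_under_general_constructions}, which ensures that the homotopy pullbacks, homotopy fibers, wedges, and smashes with spaces used to build $\bfL$, $\bfN_\pm$ and so on stay inside $\funcc(\addcat,\Spectra)$. Consequently, by induction on $n$, a compatible transformation $\bff\colon \bfE\to\bfF$ produces compatible transformations $\bff[n]\colon \bfE[n]\to\bfF[n]$ intertwining the structure maps $\bfs[n]$, and in the limit a compatible transformation $\bff[\infty]\colon \bfE[\infty]\to\bfF[\infty]$. Since $\bfd$ is simply the zeroth structure map into the homotopy colimit, the square in the statement commutes by naturality.

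Next I would check that $\bff[\infty]$ is a $\pi_*$-isomorphism in a range. By Theorem~\ref{the:Main_property_of_the_delooping_construction}~\ref{the:Main_property_of_the_delooping_construction:pi_i(bfd)}, $\pi_i(\bfd(\bfE))$ and $\pi_i(\bfd(\bfF))$ are isomorphisms for all $i\ge -c$. Combined with the assumption that $\pi_i(\bff)$ is an isomorphism for $i\ge N$, the commutative square forces $\pi_i(\bff[\infty])$ to be an isomorphism for all $i\ge\max\{N,-c\}$. Finally, Theorem~\ref{the:Main_property_of_the_delooping_construction}~\ref{the:Main_property_of_the_delooping_construction:E[infty]} says that both $\bfE[\infty]$ and $\bfF[\infty]$ are $\infty$-contracted, so Lemma~\ref{lem:alpha_bijective} applies to the compatible transformation $\bff[\infty]$ and concludes that it is a weak equivalence.

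There is no serious obstacle here; the only care needed is to verify the functoriality claims in the first paragraph, i.e.\ that every construction entering $\bfE\mapsto \bfE[\infty]$ preserves compatible transformations so that $\bff[\infty]$ is well defined as a morphism in $\funcc(\addcat,\Spectra)$ and makes the square commute on the nose. Once this is in place, the conclusion is an immediate combination of Theorem~\ref{the:Main_property_of_the_delooping_construction} (bijectivity of $\pi_i(\bfd)$ in a range and $\infty$-contractedness of the target) with Lemma~\ref{lem:alpha_bijective}.
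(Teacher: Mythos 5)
Your proposal is correct and follows exactly the route the paper intends: the paper simply states that Theorem~\ref{the:Main_property_of_the_delooping_construction} and Lemma~\ref{lem:alpha_bijective} imply the corollary, and you have spelled out precisely how --- naturality of the delooping gives $\bff[\infty]$ and the commuting square, part~\ref{the:Main_property_of_the_delooping_construction:pi_i(bfd)} of the theorem together with the hypothesis gives a range-wise $\pi_*$-isomorphism, part~\ref{the:Main_property_of_the_delooping_construction:E[infty]} gives $\infty$-contractedness of the targets, and Lemma~\ref{lem:alpha_bijective} then upgrades to a weak equivalence.
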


  \begin{remark}[Universal property of the delooping construction in the homotopy category]
  \label{rem:Universal_property_of_the_delooping_construction_in_the_homotopy_category}
  Suppose that the covariant functors $\bfE, \bfF \colon \addcat \to \Spectra$ satisfy
  Condition~\ref{con:condition_bfu}. Suppose that $\bfE$ is $c$-contracted for some
  integer $c$, and let $\bff \colon \bfE \to \bfF$ be a compatible transformation to an
  $\infty$-contracted functor.

  Then, in the homotopy category (of functors $\addcat\to\Spectra$), the transformation $\bff$ 
  factors uniquely through $\bfd(\bfE)\colon \bfE\to \bfE[\infty]$:
\[\bff = \bfd(\bfF)\inv \circ \bff[\infty] \circ \bfd(\bfE).\]
   \end{remark}


\typeout{-----------------------  Section 4: Delooping algebraic K-theory of additive categories------------------------}

\section{Delooping algebraic K-theory of additive categories}
\label{sec:Delooping_algebraic_K-theory_of_additive_categories}

Now we treat our main example for $\bfE$, the functor
$\bfK \colon \addcat \to \Spectra$ that  assigns to an additive category $\cala$
the  connective $K$-theory spectrum $\bfK(\cala)$ of  $\cala$.

\begin{definition}[Non-connective algebraic $K$-theory spectrum {$\bfKinfty$}]
\label{def:non-connective_algebraic_K-theory_spectrum_bfK[infty]}
We call the functor 
\[
\bfKinfty := \bfK[\infty] \colon \addcat \to \Spectra
\]
associated to $\bfK  \colon \addcat \to \Spectra$ in Definition~\ref{def:E[infty]} the 
\emph{non-connective algebraic $K$-theory functor}.

If $\cala$ is an additive category, then $K_i(\cala) := \pi_i(\bfKinfty(\cala))$ 
is the $i$-th algebraic $K$-group of $\cala$ for $i \in \IZ$.
\end{definition}

Notice that by Lemma~\ref{lem:alpha_bijective} we could have as well defined $\bfKinfty$
to be $\bfKidem[\infty]$. In particular, by 
Theorem~\ref{the:Bass-Heller-Swan_Theorem_for_bfKIdem} and 
Theorem~\ref{the:Main_property_of_the_delooping_construction}~\ref{the:Main_property_of_the_delooping_construction:pi_i(bfd)},
Definition~\ref{def:non-connective_algebraic_K-theory_spectrum_bfK[infty]} extends the
previous definition $K_i(\cala) := \pi_i\bigl(\bfKidem(\cala)\bigr)$ for $i \ge 0$
of~\eqref{K_i(cala)_i_ge_0}.

We conclude from Theorem~\ref{the:Bass-Heller-Swan_Theorem_for_bfK}
and Theorem~\ref{the:Main_property_of_the_delooping_construction}%
~\ref{the:Main_property_of_the_delooping_construction:E[infty]}

\begin{theorem}[Bass-Heller-Swan-Theorem for $\bfKinfty$]
\label{the:Bass-Heller-Swan-Theorem_for_non-connective_algebraic_K-theory}
The Bass-Heller-Swan transformation
\[
\bfBHS \colon \bfKinfty
 \wedge (S^1)_+ \vee \bfN_+\bfKinfty \vee \bfN_-\bfKinfty \xrightarrow{\simeq} \bfZ\bfKinfty
\]
is a weak equivalence. 

In particular we get for every $i \in \IZ$ and every additive category $\cala$ an in
$\cala$-natural isomorphism
\[
K_{i-1}(\cala) \oplus K_i(\cala) \oplus N\!K_i(\cala[t]) \oplus N\!K_i(\cala[t^{-1}]) \xrightarrow{\cong} K_i(\cala[t,t^{-1}]),
\]
where $N\!K_i(\cala[t^{\pm}])$ is defined as the kernel of $K_i(\ev_0^{\pm}) \colon K_i(\cala[t^{\pm}]) \to K_i(\cala)$.
\end{theorem}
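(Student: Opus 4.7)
The plan is to deduce this as a direct application of the delooping machinery developed in Sections 2 and 3. By Definition~\ref{def:non-connective_algebraic_K-theory_spectrum_bfK[infty]} we have $\bfKinfty = \bfK[\infty]$, so it suffices to show that $\bfK[\infty]$ is $\infty$-contracted and then unpack the homotopy groups of $\bfB\bfKinfty$.

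First I would verify the hypotheses. The functor $\bfK$ satisfies Condition~\ref{con:condition_bfu} by the remark following that condition (citing Waldhausen). Theorem~\ref{the:Bass-Heller-Swan_Theorem_for_bfK} says that $\bfK$ is $1$-contracted. Then Theorem~\ref{the:Main_property_of_the_delooping_construction}~\ref{the:Main_property_of_the_delooping_construction:E[infty]}, applied with $c = 1$, immediately yields that $\bfKinfty = \bfK[\infty]$ is $\infty$-contracted. By Definition~\ref{def:c-contracted_functor}, this means exactly that the transformation
\[
\bfBHS \colon \bfB\bfKinfty \to \bfZ\bfKinfty
\]
is a weak equivalence, and unwinding the definition $\bfB\bfE = \bigl(\bfE \wedge (S^1)_+\bigr) \vee \bfN_+\bfE \vee \bfN_-\bfE$ gives the first displayed statement.

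For the explicit formula on homotopy groups, I would take $\pi_i$ of both sides of the weak equivalence and identify the three summands on the left. The summand $\pi_i\bigl(\bfKinfty(\cala) \wedge (S^1)_+\bigr)$ decomposes naturally as $K_i(\cala) \oplus K_{i-1}(\cala)$ via the standard splitting coming from the cofiber sequence $S^0 \to (S^1)_+ \to S^1$. For the other two summands, recall that by construction $\bfN_\pm\bfE$ is the homotopy fibre of $\bfev_0^\pm \colon \bfZ_\pm\bfE \to \bfE$; since $\bfev_0^\pm \circ \bfi_\pm = \id$, the associated long exact sequence degenerates into short split exact sequences, so $\pi_i(\bfN_\pm\bfKinfty(\cala)) = \ker\bigl(K_i(\cala[t^{\pm 1}]) \to K_i(\cala)\bigr) = N\!K_i(\cala[t^{\pm}])$. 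Assembling these identifications with the weak equivalence above produces the desired natural isomorphism.

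There is no real obstacle here: all the substance has already been invested into proving Theorem~\ref{the:Bass-Heller-Swan_Theorem_for_bfK} and the delooping theorem. The only minor point worth checking carefully is that naturality in $\cala$ is preserved throughout, which follows because every map used (the BHS transformation, the splittings of $\bfev_0^\pm$ by $\bfi_\pm$, and the cofiber splitting for $(S^1)_+$) is a natural transformation of functors $\addcat \to \Spectra$.
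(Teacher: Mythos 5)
Your argument is correct and mirrors the paper's own proof exactly: both deduce $\infty$-contractedness of $\bfKinfty$ from Theorem~\ref{the:Bass-Heller-Swan_Theorem_for_bfK} via Theorem~\ref{the:Main_property_of_the_delooping_construction}~\ref{the:Main_property_of_the_delooping_construction:E[infty]}, and then read off the displayed isomorphism from the definitions of $\bfB$ and $\bfN_{\pm}$. The paper states this in a single sentence; you have simply spelled out the unwinding of homotopy groups, which is the right supporting detail.
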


We will extend Theorem~\ref{the:Bass-Heller-Swan-Theorem_for_non-connective_algebraic_K-theory}
later to the twisted case.

\begin{remark}[Fundamental sequence]
\label{rem:fundamental_sequence}
Theorem~\ref{the:Bass-Heller-Swan-Theorem_for_non-connective_algebraic_K-theory} 
is equivalent to the statement that there exists for each $i \in \IZ$ a fundamental sequence of algebraic $K$-groups
\begin{multline*}
0 \to K_i(\cala) \xrightarrow{(K_i(i_+), -K_i(i_-))} K_i(\cala[t]) \oplus K_i(\cala[t^{-1}]) 
\\
\xrightarrow{K_i(j_+) \oplus K_i(j_-)} 
K_i(\cala[t,t^{-1}]) \xrightarrow{\partial_i} K_{i-1}(\cala) \to 0
\end{multline*}
which comes with a splitting $s_{i-1} \colon K_{i-1}(\cala) \to K_i(\cala[t,t^{-1}])$ of $\partial_i$, natural in $\cala$.
\end{remark}

\begin{remark}[Identification with the original negative $K$-groups]
\label{rem:Identification_with_the_original_negative_K-groups}
Bass~\cite[page~466 and page~462]{Bass(1968)} (see also~\cite[Chapter~3, Section~3]{Rosenberg(1994)})  
defines negative $K$-groups $K_i(\cala)$ for $i = -1,-2, \ldots$ inductively
by putting 
\[
K_{i-1}(\cala) := \cok\left(K_i(j_+) \oplus K_i(j_-) \oplus K_i(\cala[t]) \oplus K_i(\cala[t^{-1}])  \to K_i(\cala[t,t^{-1}])\right).
\]
We conclude from Remark~\ref{rem:fundamental_sequence}
that the negative $K$-groups of
Definition~\ref{def:non-connective_algebraic_K-theory_spectrum_bfK[infty]}
are naturally isomorphic to the negative $K$-groups defined by Bass. 
\end{remark}

\begin{remark}[Identification with the Pedersen-Weibel construction]
\label{rem:identification_with_Pedersen-Weibel}
Pedersen-Weibel~\cite{Pedersen-Weibel(1985)} construct another transformation $\bfK_{\PW} \colon \addcat \to \Spectra$
which models negative algebraic $K$-theory. We conclude from 
Corollary~\ref{cor:turning_a_transformation_into_a_weak_equivalence}
that there exists  weak equivalences
\[
\bfKinfty \xrightarrow{\simeq} \bfK_{\PW}[\infty] \xleftarrow{\simeq} \bfK_{\PW}
\]
since there is a natural map $\bfK \to \bfK_{\PW}$ inducing on  $\pi_i$ bijections for $i \ge 1$ and
the Bass-Heller-Swan map for $\bfK_{\PW}$ is a weak equivalence as 
$\pi_i(\bfK_{\PW}[\infty])$ agrees in a natural way 
with the $i$-th homotopy groups of the connective $K$-theory for $i \ge 1$ and
with the negative $K$-groups of Bass for $i \le 0$, see~\cite[Theorem~A]{Pedersen-Weibel(1985)}.
\end{remark}


\typeout{---------------  Section 5: Compatibility of the delooping construction with homotopy colimits ----------------------}

\section{Compatibility with colimits}
\label{sec:Compatibility_of_the_delooping_construction_with_homotopy_colimits}

Let $\calj$ be a small category, not necessarily filtered or finite. Recall the notation
\[
\funcc(\addcat,\Spectra)
\]
from Definition~\ref{def:funcc}. Consider a $\calj$-diagram in
$\funcc(\addcat,\Spectra)$, i.e., a covariant functor $\bfE \colon \calj \to
\funcc(\addcat,\Spectra)$. There is the functor homotopy colimit
\[
\hocolim_{\calj} \colon \funcc(\calj,\Spectra) \to \Spectra
\]
which sends a $\calj$-diagram of spectra, i.e., a covariant functor $\calj \to \Spectra$,
to its homotopy colimit.  As a consequence of Lemma~\ref{lem:inheritance_of_condition_under_general_constructions}, 
it induces a functor, denoted by the same symbol
\[
\hocolim_{\calj} \colon \func\bigl(\calj,\funcc(\addcat,\Spectra)\bigr) \to
\funcc(\addcat,\Spectra),
\]
that sends a $\calj$-diagram $(\bfE(j))_{j \in \calj}$ to the functor $\addcat \to
\Spectra$ which assigns to an additive category $\cala$ the spectrum $\hocolim_{\calj}
\bfE(j)(\cala)$.

\begin{theorem}[Compatibility of the delooping construction with homotopy colimits]
\label{the:Compatibility_of_the_delooping_construction_with_homotopy_colimits}
Given a $\calj$-diagram $\bfE$ in $\func_c(\addcat,\Spectra)$, there is a morphism in
$\funcc(\addcat,\Spectra)$, natural in $\bfE$,
\[
\gamma(\bfE) \colon \hocolim_{\calj} \bigl(\bfE(j)[\infty]\bigr) \xrightarrow{\simeq}
\bigl(\hocolim_{\calj} \bfE(j)\bigr)[\infty]
\]
that is a weak equivalence, i.e., its evaluation at any object in $\addcat$ is a weak
homotopy equivalence of spectra.
\end{theorem}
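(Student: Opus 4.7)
The plan is to construct $\gamma(\bfE)$ inductively, one stage of the delooping tower at a time, and to verify that each stage is already a weak equivalence. For each $n \ge 0$, functoriality of $(-)[n]$ on $\funcc(\addcat,\Spectra)$ applied to the canonical maps $\bfE(j) \to \hocolim_\calj \bfE$ into the homotopy colimit yields a compatible family of natural transformations $\bfE(j)[n] \to \bigl(\hocolim_\calj \bfE\bigr)[n]$. By the universal property of the homotopy colimit over $\calj$, these assemble into a morphism in $\funcc(\addcat,\Spectra)$,
\[
\gamma_n(\bfE) \colon \hocolim_\calj \bigl(\bfE(j)[n]\bigr) \to \bigl(\hocolim_\calj \bfE(j)\bigr)[n],
\]
natural in $\bfE$. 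Passing to hocolim over $n$ and composing with the standard interchange-of-colimits weak equivalence $\hocolim_\calj \hocolim_n \simeq \hocolim_n \hocolim_\calj$ produces the desired map $\gamma(\bfE)$.

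The proof that $\gamma(\bfE)$ is a weak equivalence reduces, via the fact that weak equivalences of sequential hocolims are detected on homotopy groups, to showing that each $\gamma_n(\bfE)$ is a weak equivalence. I would argue by induction on $n$, the case $n = 0$ being immediate. Since $\bfE(j)[n+1] = \Omega\bfL\bfE(j)[n]$ and analogously for $\bigl(\hocolim_\calj \bfE(j)\bigr)[n+1]$, the induction step comes down to verifying that $\hocolim_\calj$ commutes with $\bfL$ and with $\Omega$, up to natural weak equivalence. The commutation with $\bfL$ follows from exactly the pattern invoked in the proof of Theorem~\ref{the:Main_property_of_the_delooping_construction}\ref{the:Main_property_of_the_delooping_construction:E[infty]}: $\hocolim_\calj$ commutes on the nose with $\bfZ$, $\bfZ_\pm$, and $-\wedge(S^1)_+$, commutes with $\vee$, and hence commutes with $\bfN_\pm$ up to weak equivalence via the natural equivalence $\bfE \vee \bfN_\pm\bfE \xrightarrow{\simeq} \bfZ_\pm\bfE$; since homotopy colimits further commute with homotopy pushouts and cofibers, and $\bfL$ is defined from $\bfB_r$, $\bfZ$ and $-\wedge S^1$ via such constructions, the desired commutation follows.

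The main obstacle is the commutation with $\Omega$: for any $\calj$-diagram $\bfF$ of spectra the canonical morphism $\hocolim_\calj \Omega\bfF(j) \to \Omega\hocolim_\calj \bfF(j)$ must be a weak equivalence. This is a general feature of stable homotopy theory: $\Omega$ is quasi-inverse to $\Sigma$ on the stable homotopy category of spectra, and $\Sigma$, being a left adjoint, commutes with homotopy colimits; in the naive spectrum framework used here, this can be verified at the level of homotopy groups via the Bousfield--Kan spectral sequence computing $\pi_*$ of a homotopy colimit, together with the natural shift identification $\pi_i(\Omega\bfF) = \pi_{i+1}(\bfF)$. With both commutations in hand, the inductive step closes, each $\gamma_n(\bfE)$ is a weak equivalence, and hence so is $\gamma(\bfE)$.
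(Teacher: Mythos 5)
Your overall strategy is the same as the paper's: show that each stage of the delooping tower commutes with $\hocolim_\calj$ up to natural weak equivalence, assemble a ladder diagram, and pass to the homotopy colimit over $n$ using interchange of homotopy colimits (Lemma~\ref{lem:diagrams_of_spectra}~\ref{lem:diagrams_of_spectra:commute}). The substance of your second paragraph — identity on the nose for $\bfZ$, $\bfZ_\pm$, commutation with $\vee$, $\bfN_\pm$, $-\wedge(S^1)_+$, $\hofib$/$\hocofib$, and with $\Omega$ (Lemma~\ref{lem:diagrams_of_spectra}~\ref{lem:diagrams_of_spectra:Omega}) — is exactly how the paper produces the comparison maps.

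One step in your first paragraph does not hold as stated. You construct $\gamma_n$ by applying $(-)[n]$ to the canonical maps $\bfE(j)\to\hocolim_\calj\bfE$ and then invoking ``the universal property of the homotopy colimit.'' In the naive, strict spectrum framework used here (Davis-L\"uck, Hollender-Vogt), the homotopy colimit does not have a strict one-categorical universal property: the structure maps $\bfE(j)\to\hocolim_\calj\bfE$ do not form a strictly commuting cocone (for $u\colon j\to j'$ the two composites $\bfE(j)\to\hocolim$ agree only up to the $1$-simplex of the bar construction indexed by $u$), so the family $\bfE(j)[n]\to(\hocolim_\calj\bfE)[n]$ does not constitute a strict natural transformation out of which a map from $\hocolim_\calj(\bfE(j)[n])$ can be extracted by abstract nonsense. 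The paper sidesteps this entirely: it never mentions a universal property but directly \emph{constructs} the natural map $\hocolim_\calj\bfE[n]\to(\hocolim_\calj\bfE)[n]$ by threading together the commutation maps of Lemma~\ref{lem:diagrams_of_spectra} through $\bfZ_\pm$, $\bfN_\pm$, $\bfB_r$, $\bfL$ and $\Omega$. In other words, the machinery you use in your second paragraph to \emph{verify} that $\gamma_n$ is an equivalence is what should \emph{define} $\gamma_n$ in the first place; once you make that identification, the construction and the proof close simultaneously, as in the paper.
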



The proof uses some well-known properties of homotopy colimits of spectra, 
which we record here for the reader's convenience.

\begin{lemma}\label{lem:diagrams_of_spectra}
  Let $\bfE$ and $\bfF$ be $\calj$-diagrams of spectra and let $\bff \colon \bfE \to \bfF$
  be a morphism between them.
\begin{enumerate}

\item \label{lem:diagrams_of_spectra:vee}
The canonical map
\[\bigl(\hocolim_{\calj} \bfE\bigr)  \vee \bigl(\hocolim_{\calj} \bfF \bigr) 
\xrightarrow{\simeq} \hocolim_{\calj} \bigl(\bfE \vee \bfF \bigr)
\]
is an isomorphism;

\item \label{lem:diagrams_of_spectra:smash}
If $Y$ is a pointed space, then we obtain an isomorphism, natural in $\bfE$,
\[
\hocolim_{\calj} \bigl(\bfE \wedge Y\bigr) \xrightarrow{\simeq} \bigl(\hocolim_{\calj} \bfE \bigr) \wedge Y;
\]

\item \label{lem:diagrams_of_spectra:Omega}
There is  a weak homotopy equivalence, natural in $\bfE$,
\[
\hocolim_{\calj} \bigl(\Omega\bfE\bigr) \xrightarrow{\simeq} \Omega \bigl(\hocolim_{\calj} \bfE \bigr);
\]

\item \label{lem:diagrams_of_spectra:commute}
If $\calk$ is another small category and we have a $\bfJ \times \bfK$ diagram $\bfE$ of spectra.
Then we have isomorphisms of spectra, natural in $\bfE$,
\[
\hocolim_{\calj} \bigl(\hocolim_{\calk} \bfE \bigr) \xrightarrow{\cong} 
\hocolim_{\calj \times \calk} \bfE \xleftarrow{\cong}
\hocolim_{\calk} \bigl(\hocolim_{\calj} \bfE \bigr);
\]

\item \label{lem:diagrams_of_spectra:homotopy_fibers}
Let $\hofib(\bff)$ and $\hocofib(\bff)$ respectively be the $\calj$-diagram 
of spectra which assigns to an object $j$ in $\calj$
the homotopy fiber  and homotopy cofiber respectively 
 of $\bff(j) \colon \bfE(j) \to \bfF(j)$. 

Then there are   weak homotopy equivalences, natural in $\bff$,
\begin{eqnarray*}
\hocolim_{\calj}  \hocofib(\bff) & \xrightarrow{\simeq} & \hocofib\bigl(\hocolim_{\calj} \bff \bigr);
\\
\hocolim_{\calj}  \hofib(\bff) & \xrightarrow{\simeq} & \hofib\bigl(\hocolim_{\calj} \bff \bigr);
\end{eqnarray*}

\item \label{lem:diagrams_of_spectra:pi_and_filtered}
If $\calj$ is filtered, i.e., for any two objects $j_0$ and $j_1$ there exists a morphism  $u \colon j \to j'$ in $\calj$
such that there exists morphisms from both $j_0$ and $j_1$ to $j$ and for any two morphisms
$u_0 \colon j_0 \to j$ and $u_1 \colon j_1 \to j$ we have $u \circ j_0 = u \circ u_1$. Then the canonical map
\[
\colim_{\calj} \pi_i(\bfE(j)) \xrightarrow{\cong} \pi_i\bigl(\hocolim_{\calj} \bfE(j)\bigr)
\]
is bijective for all $i \in \IZ$.
\end{enumerate}
\end{lemma}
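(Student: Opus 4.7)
The strategy is to reduce every item to a standard property of homotopy colimits of pointed spaces, exploiting that $\hocolim$ of spectra is defined levelwise via the Bousfield–Kan model (see \cite{Davis-Lueck(1998),Hollender-Vogt(1992)}). With this in mind, items (i), (ii), and (iv) are essentially formal, while (iii) and (v) require a little more care.

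For (i) and (ii), the Bousfield–Kan model $\hocolim_\calj \bfE$ is built, level by level, from coproducts indexed by nerves of comma categories and a geometric realization. Since wedges distribute over coproducts and commute with realization, (i) follows immediately. For (ii), the functor $-\wedge Y$ is left adjoint to $\map(Y,-)$, hence commutes with coproducts and with realization, giving the isomorphism. Statement (iv) is the Fubini property of hocolims: both iterated constructions are naturally isomorphic to the bar construction over $\calj\times\calk$, and the two isomorphisms are compatible with the swap of factors.

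For (iii), the structure maps of $\hocolim_\calj \bfE$ are, by construction, the homotopy colimits of the structure maps of $\bfE$. Using the defining identity $\pi_i(\bfF)=\colim_n \pi_{i+n}(\bfF_n)$, one writes $\pi_i(\hocolim_\calj \Omega\bfE)$ and $\pi_i(\Omega\hocolim_\calj \bfE)=\pi_{i+1}(\hocolim_\calj\bfE)$ as filtered colimits over the spectrum index $n$, and then checks that the natural comparison map is an isomorphism. The essential point, and the step that is not purely formal, is that the levelwise comparison $\hocolim_\calj \map(S^1,\bfE_n)\to \map(S^1,\hocolim_\calj\bfE_n)$ — which need not be a weak equivalence of pointed spaces — does induce an isomorphism after the subsequent filtered colimit over $n$, because compactness of $S^1$ makes $\pi_{i+n}$ commute with this filtered colimit. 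This is the main obstacle and deserves the most attention.

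Finally, for (v) the homotopy cofiber is the homotopy pushout of $\bfE\leftarrow \pt\to \bff$, i.e., a hocolim over a three-object category $\calk$; the claim $\hocolim_\calj\hocofib(\bff)\simeq \hocofib(\hocolim_\calj\bff)$ thus becomes a special case of the Fubini property (iv). For homotopy fibers, one uses the weak equivalence $\hofib(\bff)\simeq \Omega\hocofib(\bff)$, valid in spectra, together with (iii). Statement (vi) is the classical fact that $\pi_n$ of pointed spaces commutes with filtered colimits (by compactness of $S^n$), applied at each spectrum level, combined with the observation that $\pi_i(\bfE)=\colim_n \pi_{i+n}(\bfE_n)$ is itself a filtered colimit and that filtered colimits commute among themselves.
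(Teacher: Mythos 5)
The paper records this lemma as a list of well-known facts and does not supply a proof, so you are being compared against a standard argument rather than against the text. Your treatment of parts \ref{lem:diagrams_of_spectra:vee}, \ref{lem:diagrams_of_spectra:smash}, \ref{lem:diagrams_of_spectra:commute}, and \ref{lem:diagrams_of_spectra:pi_and_filtered} is correct, and your reduction of the homotopy-cofiber half of \ref{lem:diagrams_of_spectra:homotopy_fibers} to the Fubini property \ref{lem:diagrams_of_spectra:commute} is sound.

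There is, however, a genuine gap in your argument for part \ref{lem:diagrams_of_spectra:Omega}. The appeal to ``compactness of $S^1$ makes $\pi_{i+n}$ commute with this filtered colimit'' does not address the real obstruction. The colimit over the spectrum index $n$ is already a filtered colimit of abelian groups, so there is nothing left there for a smallness argument to buy you; the actual difficulty is that $\hocolim_\calj$ is a homotopy colimit over an \emph{arbitrary} small category, and $\map(S^1,-)$ fails to commute with it at the level of pointed spaces precisely because $\calj$ need not be filtered. Compactness of $S^1$ rescues only the filtered (or sequential) case, which is part \ref{lem:diagrams_of_spectra:pi_and_filtered}, not part \ref{lem:diagrams_of_spectra:Omega}. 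What actually makes \ref{lem:diagrams_of_spectra:Omega} true is \emph{stability}: apply $\Sigma = {-}\wedge S^1$, which commutes with $\hocolim_\calj$ by part \ref{lem:diagrams_of_spectra:smash}, to the canonical map $\hocolim_\calj\Omega\bfE \to \Omega\hocolim_\calj\bfE$. Both sides then receive a stable equivalence from $\hocolim_\calj\bfE$ via the counit $\Sigma\Omega\to\id$, using that the levelwise $\hocolim$ preserves stable equivalences of spectra; since $\Sigma$ detects stable equivalences, the original map is a stable equivalence. Alternatively, one can establish the $\hofib$-half of \ref{lem:diagrams_of_spectra:homotopy_fibers} first from its $\hocofib$-half together with the stable fact that fiber sequences of spectra are also cofiber sequences, and then deduce \ref{lem:diagrams_of_spectra:Omega} as the special case $\Omega\bfE\simeq\hofib(\pt\to\bfE)$. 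Either route avoids leaving the $\hofib$-half of \ref{lem:diagrams_of_spectra:homotopy_fibers} resting on an unestablished \ref{lem:diagrams_of_spectra:Omega}, as it currently does in your write-up.
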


\begin{proof}[Proof of Theorem~\ref{the:Compatibility_of_the_delooping_construction_with_homotopy_colimits}]
Let $\bfE$ be a $\calj$-diagram in $\funcc(\addcat,\Spectra)$.
We have by definition the equalities
\begin{eqnarray*}
\bfZ \bigl(\hocolim_{\calj} \bfE\bigr) & = & \hocolim_{\calj} (\bfZ\bfE);
\\
\bfZ_{\pm} \bigl(\hocolim_{\calj} \bfE\bigr) & = & \hocolim_{\calj} (\bfZ_{\pm}\bfE).
\end{eqnarray*}
We obtain from Lemma~\ref{lem:diagrams_of_spectra}~\ref{lem:diagrams_of_spectra:smash}
and~\ref{lem:diagrams_of_spectra:homotopy_fibers}
natural weak homotopy equivalences
\begin{eqnarray*}
\hocolim_{\calj} (N_{\pm}\bfE)
& \xrightarrow{\simeq} &
\bfN_{\pm} \bigl(\hocolim_{\calj} \bfE\bigr);
\\
\bigl(\hocolim_{\calj} \bigl(\bfE  \wedge (S^1)_+\bigr)
& \xrightarrow{\simeq} &
\bigl(\hocolim_{\calj} \bfE \bigr) \wedge (S^1)_+,
\end{eqnarray*}
and thus by Lemma~\ref{lem:diagrams_of_spectra}~\ref{lem:diagrams_of_spectra:vee}
a natural weak homotopy equivalence
\begin{eqnarray*}
\hocolim_{\calj} (\bfB_r\bfE)
& \xrightarrow{\simeq} &
\bfB_r \bigl(\hocolim_{\calj} \bfE\bigr).
\end{eqnarray*}

As $\calj$ takes values in functors satisfying Condition~\ref{con:condition_bfu}, the maps
$\cala(j)\colon \bfE(j)\wedge (S^1)_+ \to \bfZ\bfE(j)$ are natural in $j\in \calj$.  In
this way $\bfL\bfE(j)$ also becomes a functor in $j$, and further applications of 
Lemma~\ref{lem:diagrams_of_spectra} show that the induced map
\[\hocolim_\calj \bfL\bfE \xrightarrow{\simeq} \bfL\bigl(\hocolim_\calj \bfE\bigr)\]
is a weak equivalence. We obtain a commutative diagram
\[
\xymatrix{
\hocolim_{\calj} \bfE \ar[d]^{\hocolim_{\calj}\bfs} \ar[r]^{\id}
& 
\bigl(\hocolim_{\calj}  \bfE \bigr) \ar[d]^{\bfs} 
\\
\hocolim_{\calj} \Omega \bfL\bfE   \ar[r]^{\simeq}
&
\Omega \bfL\bigl(\hocolim_{\calj}  \bfE\bigr)
}
\]
with weak homotopy equivalences as vertical arrows.

Iterating this construction leads to a  commutative diagram with weak homotopy equivalences
as vertical arrows.
\[
\xymatrix@!C= 30mm{
\hocolim_{\calj}  \bfE \ar[r]^-{\hocolim_{\calj} \bfs[0]}  \ar[d]^{\id}
& \hocolim_{\calj}  \bfE[1] \ar[r]^-{\hocolim_{\calj}  \bfs[1]} \ar[d]^{\simeq}
& \hocolim_{\calj}  \bfE[2] \ar[r]^-{\hocolim_{\calj} \bfs[2]} \ar[d]^{\simeq } 
&
\cdots
\\
\hocolim_{\calj}  \bfE \ar[r]^-{\bfs[0]}  
& \bigl(\hocolim_{\calj}  \bfE\bigr) [1] \ar[r]^-{\bfs[1]} 
& \bigl(\hocolim_{\calj}  \bfE\bigr) [2] \ar[r]^-{\bfs[2]} 
& \cdots
}
\]
After an application of Lemma~\ref{lem:diagrams_of_spectra}~\ref{lem:diagrams_of_spectra:commute} 
the induced map on homotopy colimits is the desired map $\gamma(\bfE)$.
\end{proof}


\typeout{----------  Section 6: The Bass-Heller-Swan decomposition for additive categories with automorphisms -----------------}

\section{The Bass-Heller-Swan decomposition for additive categories \\with automorphisms}
\label{sec:The_Bass-Heller-Swan_decomposition_for_additive_categories_with_automorphisms}

In~\cite[Theorem~0.4]{Lueck-Steimle(2013twisted_BHS)}  the following 
twisted Bass-Heller-Swan decomposition is proved for the connective $K$-theory spectrum.

\begin{theorem}[The twisted Bass-Heller-Swan decomposition for connective $K$-theory of
  additive categories]
  \label{the:BHS_decomposition_for_connective_K-theory}
  Let $\cala$ be an additive category which is idempotent complete. Let $\Phi \colon \cala \to \cala$ be an
  automorphism of additive categories. 

  \begin{enumerate}
  \item \label{the:BHS_decomposition_for_connective_K-theory:BHS-iso}

  Then there is a weak equivalence of  spectra, natural in $(\cala,\Phi)$,
  \[
  \bfa \vee \bfb_+\vee \bfb_- \colon \bfT_{\bfK(\Phi^{-1})} \vee
  \bfNK(\cala_{\Phi}[t]) \vee \bfNK(\cala_{\Phi}[t^{-1}]) \xrightarrow{\simeq}
  \bfK(\cala_{\Phi}[t,t^{-1}]);
  \]

  \item \label{the:BHS_decomposition_for_connective_K-theory:Nil}
  There exist  weak homotopy equivalences of spectra, natural in $(\cala,\Phi)$,
  \begin{eqnarray*}
   \Omega \bfNK(\cala_{\Phi}[t])  & \xleftarrow{\simeq} & \bfE(\cala,\Phi);
   \\
   \bfK(\cala) \vee  \bfE(\cala,\Phi)  & \xrightarrow{\simeq} &\bfK(\Nil(\cala,\Phi)). 
 \end{eqnarray*}
\end{enumerate}
\end{theorem}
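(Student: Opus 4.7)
The plan is to deduce the twisted Bass-Heller-Swan decomposition from the untwisted one (Theorem~\ref{the:Bass-Heller-Swan_Theorem_for_bfK}) together with a Waldhausen-style localization argument for the twisted polynomial category. First I would construct the three maps concretely. The maps $\bfb_\pm$ are the composites
\[
\bfNK(\cala_\Phi[t^{\pm 1}]) \to \bfK(\cala_\Phi[t^{\pm 1}]) \to \bfK(\cala_\Phi[t, t^{-1}]),
\]
where the first arrow comes from the defining homotopy fiber sequence of $\bfNK$ and the second is induced by the inclusion of additive categories. The map $\bfa$ out of the mapping torus $\bfT_{\bfK(\Phi^{-1})}$ is obtained by applying Lemma~\ref{lem:nat_equiv_homotopy} to the natural isomorphism $i_0 \to i_0$ whose component at $A$ is multiplication by $t$; because of the twisting, this self-homotopy of $\bfK(i_0)$ matches the monodromy of $\bfK(\Phi^{-1})$ and hence lifts uniquely to a map from the mapping torus.

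To prove the equivalence in part~\ref{the:BHS_decomposition_for_connective_K-theory:BHS-iso}, I would apply Waldhausen's fibration/approximation theorem to bounded chain complexes in $\cala_\Phi[t]$ localized at $t$-equivalences, obtaining a homotopy cofiber sequence
\[
\bfK(\Nil(\cala, \Phi)) \to \bfK(\cala_\Phi[t]) \to \bfK(\cala_\Phi[t, t^{-1}])
\]
and analogously with $t^{-1}$. Assembling these two cofiber sequences into a Mayer-Vietoris square with common piece $\bfK(\cala)$ identifies the homotopy fiber of $\bfb_+ \vee \bfb_-$, suitably arranged, with a spectrum that after one delooping is precisely $\bfT_{\bfK(\Phi^{-1})}$ -- the twisted analog of the $\bfK \wedge (S^1)_+$ summand in the untwisted case, which recovers the latter when $\Phi = \id$.

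For part~\ref{the:BHS_decomposition_for_connective_K-theory:Nil}, the splitting $\bfK(\cala) \vee \bfE(\cala, \Phi) \simeq \bfK(\Nil(\cala, \Phi))$ follows from Waldhausen additivity applied to the split inclusion $\cala \to \Nil(\cala, \Phi)$, $A \mapsto (A, 0)$, retracted by forgetting the nilpotent endomorphism; this \emph{defines} $\bfE(\cala, \Phi)$ as the splitting summand. The equivalence $\Omega \bfNK(\cala_\Phi[t]) \simeq \bfE(\cala, \Phi)$ then comes from looping the fibration sequence above, using that $\bfK(\cala) \to \bfK(\cala_\Phi[t])$ is split by $\bfK(\ev_0)$. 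The main obstacle is the Waldhausen fibration argument itself: since $\addcat$ is not a Waldhausen category, one must first pass through a model of bounded chain complexes over $\cala_\Phi[t]$ (following Grayson or Thomason-Trobaugh), verify that the $t$-torsion subcategory is a Waldhausen subcategory satisfying the required approximation properties, and finally check that these constructions are natural in both $\cala$ and $\Phi$ so that they can be assembled into the natural weak equivalences of spectra claimed in the theorem.
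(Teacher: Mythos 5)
The statement is not proved in this paper: it is quoted verbatim from the companion paper~\cite{Lueck-Steimle(2013twisted_BHS)} (Theorems~0.1 and 0.4 there) and is used here as a black box to deduce the non-connective version Theorem~\ref{the:BHS_decomposition_for_non-connective_K-theory}. So there is no in-paper proof for your attempt to match against. What the paper does reveal, in Section~\ref{sec:The_Bass-Heller-Swan_decomposition_for_additive_categories_with_automorphisms}, is that the key technical input from the companion paper is the homotopy fiber sequence \eqref{eq:fiber_sequence_with_nil}, namely
$\bfK(\Nil(\cala,\Phi)) \to \bfKch(\cala_\Phi[t^{-1}]) \to \bfKch(\cala_\Phi[t,t\inv])$,
established via the Gillet--Waldhausen theorem and Waldhausen's fibration/approximation theorems applied to bounded chain complexes, and that part~\ref{the:BHS_decomposition_for_connective_K-theory:Nil} ``follows formally'' from this sequence. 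To that extent your outline is headed in the same direction as the source.

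Three points deserve scrutiny. First, your construction of $\bfa$ is stated imprecisely: in the twisted setting, multiplication by $t$ is \emph{not} a natural self-transformation of $i_0$ but a natural transformation $\Phi^{-1}\circ i \to i$ (the paper spells this out explicitly at the end of Section~\ref{sec:The_Bass-Heller-Swan_decomposition_for_additive_categories_with_automorphisms}). It is precisely the fact that this yields a homotopy between $\bfK(i)$ and $\bfK(i\circ\Phi^{-1})$ --- not a self-homotopy of $\bfK(i_0)$ --- that makes the map descend to the mapping torus $\bfT_{\bfK(\Phi^{-1})}$; so the phrase ``this self-homotopy of $\bfK(i_0)$ matches the monodromy'' is off. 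Second, the step ``assembling these two cofiber sequences into a Mayer--Vietoris square \ldots identifies the homotopy fiber of $\bfb_+\vee\bfb_-$ \ldots with a spectrum that after one delooping is $\bfT_{\bfK(\Phi^{-1})}$'' is too compressed to be verifiable; that passage from the two localization sequences to the mapping-torus decomposition is essentially the entire content of part~\ref{the:BHS_decomposition_for_connective_K-theory:BHS-iso}, and cannot be waved away in one sentence. Third, watch the $t \leftrightarrow t^{-1}$ bookkeeping: the characteristic complex $\chi(A,f)$ lives over $\cala_\Phi[t]$, yet \eqref{eq:fiber_sequence_with_nil} has $\cala_\Phi[t^{-1}]$ in the middle and part~\ref{the:BHS_decomposition_for_connective_K-theory:Nil} involves $\bfNK(\cala_\Phi[t])$. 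Sorting out these twist conventions is exactly where a direct imitation of the untwisted argument is prone to producing a statement that is off by $\Phi \leftrightarrow \Phi^{-1}$.
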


Here $\bfT_{\bfK(\Phi\inv)}$ is the mapping torus of the map $\bfK(\Phi\inv)\colon
\bfK(\cala)\to \bfK(\cala)$, that is, the pushout of
\[\bfK(\cala)\wedge [0,1]_+ \xleftarrow{\id\wedge \operatorname{incl}} \bfK(\cala)\wedge \{0, 1\}_+ 
\xrightarrow{\bfK(\Phi\inv) \vee \id} \bfK(\cala).\] The spectrum
$\bfN\bfK(\cala_\Phi[t^{\pm1}])$ is by definition the homotopy fiber of the map
\[\bfK(\cala_\Phi[t^{\pm1}])\to \bfK(\cala)\]
induced by evaluation at 0. The category $\Nil(\cala,\Phi)$ is the exact category of
$\Phi$-nilpotent endomorphisms of $\cala$ whose objects are morphisms 
$f\colon \Phi(A)\to A$, with $A\in\ob(\cala)$ which are nilpotent in a suitable sense. For more details of the
construction of the spectra and maps appearing the result above, we refer
to~\cite[Theorem~0.1]{Lueck-Steimle(2013twisted_BHS)}.  In that paper it is also claimed that
Theorem~\ref{the:BHS_decomposition_for_connective_K-theory} implies by the delooping
construction of this paper in a formal manner the following non-connective version, where
the maps $\bfainfty$, $\bfbinfty_+$, and $\bfbinfty_-$ are defined completely analogous to
the maps $\bfa$, $\bfb_+$, $\bfb_-$, but now for $\bfK^{\infty}$ instead of $\bfK$.

\begin{theorem}[The twisted Bass-Heller-Swan decomposition for non-connective $K$-theory of
  additive categories]
  \label{the:BHS_decomposition_for_non-connective_K-theory}
  Let $\cala$ be an additive category. Let $\Phi \colon \cala \to \cala$ be an
  automorphism of additive categories. 

  \begin{enumerate}
  \item \label{the:BHS_decomposition_for_non-connective_K-theory:BHS-iso}

  There exists a weak homotopy equivalence of
  spectra, natural in $(\cala,\Phi)$, 
  \[
  \bfainfty \vee \bfbinfty_+\vee \bfbinfty_- \colon 
  \bfT_{\bfKinfty(\Phi^{-1})} \vee \bfNKinfty(\cala_{\Phi}[t]) \vee \bfNKinfty(\cala_{\Phi}[t^{-1}]) 
  \xrightarrow{\simeq}   \bfKinfty(\cala_{\Phi}[t,t^{-1}]);
  \]
  
  \item \label{the:BHS_decomposition_for_non-connective_K-theory:Nil}
  There exist  weak homotopy equivalences of spectra, natural in $(\cala,\Phi)$,
  \begin{eqnarray*}
   \Omega\bfNKinfty(\cala_{\Phi}[t]) & \xleftarrow{\simeq} & \bfE^{\infty}(\cala,\Phi);
   \\
   \bfKinfty(\cala) \vee  \bfE^{\infty} (\cala,\Phi)  & \xrightarrow{\simeq} & \bfKNilinfty(\cala,\Phi),
 \end{eqnarray*}
  where $\bfKNilinfty(\cala,\Phi)$ is a specific spectrum whose connective cover is the spectrum $\bfK\bigl(\Nil(\cala,\phi)\bigr)$.
 \end{enumerate}
\end{theorem}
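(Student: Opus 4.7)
The plan is to deduce Theorem~\ref{the:BHS_decomposition_for_non-connective_K-theory} from its connective counterpart, Theorem~\ref{the:BHS_decomposition_for_connective_K-theory}, by applying the delooping construction $[\infty]$ of Definition~\ref{def:E[infty]} to both sides of the stated weak equivalences, and then identifying the resulting deloopings with the spectra appearing in the non-connective formulation.

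First, I would establish that each non-connective spectrum in the statement is weakly equivalent to the delooping of its connective analogue. The identification $\bfKinfty(\cala_\Phi[t,t^{-1}]) = \bfK(\cala_\Phi[t,t^{-1}])[\infty]$ is by definition. Since the mapping torus is a small homotopy pushout, Theorem~\ref{the:Compatibility_of_the_delooping_construction_with_homotopy_colimits} yields $\bfT_{\bfKinfty(\Phi^{-1})} \simeq \bfT_{\bfK(\Phi^{-1})}[\infty]$. The nontrivial identification is $\bfN_{\pm}\bfKinfty \simeq (\bfN_{\pm}\bfK)[\infty]$. To obtain this, I would verify that both functors are $c$-contracted: for $\bfN_{\pm}\bfK$ with $c=1$, using the weak equivalence $\bfZ_{\pm}\bfE \simeq \bfE \vee \bfN_{\pm}\bfE$, Lemma~\ref{lem:c_contracted_and_vee}, and Theorem~\ref{the:Bass-Heller-Swan_Theorem_for_bfK}; for $\bfN_{\pm}\bfKinfty$ that it is in fact $\infty$-contracted, by the same splitting combined with the fact that $\bfKinfty$ is $\infty$-contracted by Theorem~\ref{the:Bass-Heller-Swan-Theorem_for_non-connective_algebraic_K-theory}. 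The natural comparison map is a $\pi_i$-isomorphism for $i\geq 1$ by the long exact sequence of $\bfN_{\pm}\bfE \to \bfZ_{\pm}\bfE \to \bfE$, so Corollary~\ref{cor:turning_a_transformation_into_a_weak_equivalence} gives the desired identification. For assertion~\ref{the:BHS_decomposition_for_non-connective_K-theory:Nil}, I would simply define $\bfKNilinfty(\cala,\Phi) := \bfK(\Nil(\cala,\Phi))[\infty]$ and $\bfE^{\infty}(\cala,\Phi) := \bfE(\cala,\Phi)[\infty]$.

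With these identifications in hand, both assertions follow by applying $[\infty]$ to the connective weak equivalences of Theorem~\ref{the:BHS_decomposition_for_connective_K-theory}, using that $[\infty]$ preserves weak equivalences (by Lemma~\ref{lem:f_equiv_implies_Bf,Zf,Nf_equiv} together with the fact that sequential homotopy colimits of spectra preserve weak equivalences) and commutes with finite wedges (by Theorem~\ref{the:Compatibility_of_the_delooping_construction_with_homotopy_colimits}). Since Theorem~\ref{the:BHS_decomposition_for_connective_K-theory} is stated under the hypothesis that $\cala$ is idempotent complete, I would reduce the general case to this one by means of the weak equivalence $\bfKinfty \simeq \bfKidem[\infty]$ (cf.\ the remark after Definition~\ref{def:non-connective_algebraic_K-theory_spectrum_bfK[infty]}) together with the compatibility of the Laurent and Nil constructions with idempotent completion.

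The main obstacle will be the commutation of $\bfN_{\pm}$ with the delooping $[\infty]$, since a direct argument would require one to interchange the homotopy fiber defining $\bfN_{\pm}$ with the iterated $\Omega\bfL$-constructions that build $[\infty]$, and $\bfL$ itself is assembled from homotopy cofibers which do not obviously commute with homotopy fibers. The cleanest route bypasses this by appealing to the universal property of the delooping in the homotopy category (Remark~\ref{rem:Universal_property_of_the_delooping_construction_in_the_homotopy_category}, or equivalently Corollary~\ref{cor:turning_a_transformation_into_a_weak_equivalence}): once one has verified that $\bfN_{\pm}\bfKinfty$ is $\infty$-contracted and that the natural comparison map $\bfN_{\pm}\bfK \to \bfN_{\pm}\bfKinfty$ is a $\pi_i$-isomorphism in a range, the identification $(\bfN_{\pm}\bfK)[\infty] \simeq \bfN_{\pm}\bfKinfty$ is formal, and the remainder of the theorem becomes a straightforward consequence of the connective twisted Bass-Heller-Swan decomposition combined with the results of Section~\ref{sec:The_delooping_construction} and Section~\ref{sec:Compatibility_of_the_delooping_construction_with_homotopy_colimits}.
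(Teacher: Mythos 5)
Your treatment of part~\ref{the:BHS_decomposition_for_non-connective_K-theory:BHS-iso} follows the same route as the paper: deloop the twisted $\bfBHS^t$, identify $(\bfZ^t\bfK)[\infty]\simeq\bfZ^t\bfKinfty$ and $(\bfN^t_\pm\bfK)[\infty]\simeq\bfN^t_\pm\bfKinfty$ via the universal property (Lemma~\ref{lem:alpha_bijective}/Corollary~\ref{cor:turning_a_transformation_into_a_weak_equivalence}), and use Theorem~\ref{the:Compatibility_of_the_delooping_construction_with_homotopy_colimits} for the mapping torus. You correctly anticipate that the $\bfN_\pm$-commutation is the formal step requiring the universal property. (The paper also verifies that the resulting composite actually agrees with $\bfainfty$, $\bfbinfty_\pm$; you should at least note this.)

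For part~\ref{the:BHS_decomposition_for_non-connective_K-theory:Nil} there is a genuine gap. You define $\bfKNilinfty:=(\bfK\circ\Nil)[\infty]$ and propose to apply $[\infty]$ to the connective wedge decomposition, reducing to idempotent-complete $\cala$ by ``compatibility of the Nil construction with idempotent completion.'' But for the delooping of $\bfK\circ\Nil$ to have the asserted properties (in particular, that the connective cover of $\bfKNilinfty$ is $\bfK\circ\Nil$, and that the inclusion-induced map $\bfKNilinfty(\cala,\Phi)\to\bfKNilinfty(\Idem\cala,\Idem\Phi)$ is a weak equivalence, which is what makes the reduction work), you must first prove that the functor $(\cala,\Phi)\mapsto\bfK(\Nil(\cala,\Phi))$ is $c$-contracted for some $c$ \emph{on all of} $\addcatt$, not only on idempotent-complete objects. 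This is Lemma~\ref{lem:nil_1_contracted} of the paper, and it is the substantive work in the proof of part~\ref{the:BHS_decomposition_for_non-connective_K-theory:Nil}: one must compare $\bfK\circ\Nil$ with its idempotent-completed version, and in particular show surjectivity on $\pi_0$ of the reduced Nil term, which uses the nontrivial observation that the class of $((A,p),\phi)$ also equals that of $((A,p),\phi)\oplus((A,1-p),0)$. Without this, the reduction is circular: you invoke the very idempotent-completion compatibility that must itself be proved using contractedness. Note also that the paper does not deloop the wedge decomposition directly; it deloops the Waldhausen fiber sequence $\bfK(\Nil(\cala,\Phi))\to\bfKch(\cala_\Phi[t])\to\bfKch(\cala_\Phi[t,t^{-1}])$, verifies that the delooped sequence remains a fiber sequence, and then cites the formal argument from~\cite{Lueck-Steimle(2013twisted_BHS)} that turns such a fiber sequence into the wedge decomposition. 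This is a genuinely different organization; it keeps the comparison with idempotent completions term-by-term and avoids having to commute $\Omega$ past $[\infty]$ and past the splitting at the same time.
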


To deduce Theorem~\ref{the:BHS_decomposition_for_non-connective_K-theory} from 
Theorem~\ref{the:BHS_decomposition_for_connective_K-theory}, we will think of all functors
appearing as functors in the pair $(\cala, \Phi)$ and apply a variation of the delooping
construction to these functors. In particular, the spectrum $\bfKNilinfty(\cala,\Phi)$ is
obtained by delooping the functor
\[
(\cala,\Phi)\mapsto \bfK(\Nil(\cala,\Phi)).
\] 
The next remark formalizes how to do deloop functors in $(\cala,\Phi)$.

\begin{remark}[Extension of the delooping construction to diagrams]
  \label{rem:Extension_of_the_delooping_construction_to_diagrams}
  Let $\calc$ be a fixed small category which will become an index category. 
  Let $\addcatc$ be the category of $\calc$-diagrams in $\addcat$, i.e., objects are
  covariant functors $\calc \to \addcat$ and morphisms are natural transformations of these.
  
 Our delooping construction can be extended from $\addcat$  to $\addcatc$ as follows,
 provided that the  obvious version of Condition~\ref{con:condition_bfu}
 which was originally stated for $\bfE \colon \addcat \to \Spectra$, holds now
 for $\bfE \colon \addcatc \to \Spectra$.
 
  The functors $z[t]$, $z[t^{-1}]$, $z[t,t^{-1}]$ from $\addcat$ to $\addcat$, extend to
  functors $z[t]^{\calc}$, $z[t^{-1}]^{\calc}$, $z[t,t^{-1}]^{\calc}$ $\addcatc \to \addcatc$ by composition.
  Analogously the natural transformations $i_0$, $i_{\pm}$, $j_{\pm}$ and $\ev_0^{\pm}$,
  originally defined for $\addcat$, do extend to natural transformations of functors
  $\addcatc \to \addcatc$.  Now the definitions of section~\ref{sec:The_Bass-Heller-Swan_map} make still sense if we start with a functor 
  $\bfE \colon \addcatc \to  \Spectra$ and end up with  functors 
  $\addcatc \to  \Spectra$, where it is to be understood that everything is compatible with
  Condition~\ref{con:condition_bfu}. Moreover, the notion of a $c$-contracted functor, the 
  construction of $\bfE[\infty]$,  Lemma~\ref{lem:c_contracted_and_vee}, 
  Theorem~\ref{the:Main_property_of_the_delooping_construction},
  Corollary~\ref{cor:turning_a_transformation_into_a_weak_equivalence} and 
  Theorem~\ref{the:Compatibility_of_the_delooping_construction_with_homotopy_colimits} 
  carry over word by word
  if we replace $\addcat$ by $\addcatc$ everywhere. From the definitions we also conclude:

\begin{lemma} \label{lem:commuting_[infty]_and_F} 
  Let $G \colon \addcatc \to  \addcat$ be a functor. 
  Suppose that there are natural isomorphisms in a commutative diagram
\[\xymatrix{
   G \circ z[t]^{\calc} \ar[d]^{G(j_+)} \ar[rr]^\cong && z[t] \circ G \ar[d]^{j_+}
  \\
  G \circ   z[t^{-1}]^{\calc}  \ar[rr]^\cong && z[t^{-1}] \circ G;
  \\
  G \circ z[t,t^{-1}]^{\calc} \ar[u]_{G(j_-)} \ar[rr]^\cong && z[t,t^{-1}] \circ G \ar[u]_{j_-}
}\]
  Let $\bfE \colon \addcat \to \Spectra$ be a functor
  respecting Condition~\ref{con:condition_bfu}.  

  Then $\bfE \colon \addcat
  \to \Spectra$ is $c$-contracted if and only if $\bfE \circ G \colon \addcat^{\calc} \to  \Spectra$ 
  is $c$-contracted, and we have a natural isomorphism 
    \[
    \bfE[\infty] \circ G \cong \bigl(\bfE \circ G\bigr)[\infty].
    \]
  \end{lemma}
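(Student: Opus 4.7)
The plan is to show that all of the constructions used in defining the delooping — namely $\bfZ$, $\bfZ_{\pm}$, $\bfN_{\pm}$, $\bfB$, $\bfB_r$, $\bfL$, and the associated natural transformations $\bfBHS$, $\bfBHS_r$, $\bfs$ — commute with precomposition by $G$. Once this is established, the $c$-contracted condition and the formation of $\bfE[\infty]$ will transport across $G$ in a purely formal way.

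First, I would unwind the definitions of section~\ref{sec:The_Bass-Heller-Swan_map}. Since $\bfZ\bfE = \bfE \circ z[t,t^{-1}]$ and $\bfZ_{\pm}\bfE = \bfE \circ z[t^{\pm 1}]$, the hypothesized natural isomorphisms $G \circ z[t^{\pm 1}]^{\calc} \cong z[t^{\pm 1}] \circ G$ and $G \circ z[t,t^{-1}]^{\calc} \cong z[t,t^{-1}] \circ G$ immediately give natural isomorphisms
\[
\bfZ_{\pm}(\bfE \circ G) \cong (\bfZ_{\pm}\bfE) \circ G, \qquad \bfZ(\bfE \circ G) \cong (\bfZ\bfE) \circ G.
\]
The compatibility assumption on $j_{\pm}$ (and the analogous compatibilities for $i_{\pm}$, $i_0$, $\ev_0^{\pm}$, which one must verify from the hypothesis and from the fact that $i_{\pm}$, $\ev_0^{\pm}$ are natural transformations between these composable functors) shows that under these isomorphisms the natural transformations $\bfj_{\pm}$, $\bfi_{\pm}$, $\bfi_0$, $\bfev_0^{\pm}$ for $\bfE \circ G$ correspond to those for $\bfE$ composed with $G$. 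Taking homotopy fibers, wedges, and smashing with $(S^1)_+$ commute with precomposition by $G$, so one obtains natural isomorphisms
\[
\bfN_{\pm}(\bfE \circ G) \cong (\bfN_{\pm}\bfE) \circ G, \quad \bfB_r(\bfE\circ G) \cong (\bfB_r \bfE) \circ G, \quad \bfB(\bfE\circ G) \cong (\bfB\bfE) \circ G,
\]
and these identifications intertwine $\bfBHS$ and $\bfBHS_r$ on both sides.

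Next, I would check that the map $\bfa$ constructed from Condition~\ref{con:condition_bfu} is compatible with $G$. The natural transformation $T \colon i_0 \to i_0$ given by multiplication by $t$ pulls back along $G$ to the corresponding natural transformation for $z[t,t^{-1}]^{\calc}$ under the given isomorphism; hence the homotopy $\bfh$ and thus $\bfa$ transport across $G$. Taking homotopy cofibers and pushouts then yields $\bfL(\bfE \circ G) \cong (\bfL \bfE) \circ G$, compatibly with $\bfs$. From these intertwining identifications, the two conditions of Definition~\ref{def:c-contracted_functor} for $\bfE \circ G$ evaluated at an object $\cala^{\calc}$ of $\addcatc$ are the same as those for $\bfE$ evaluated at $G(\cala^{\calc})$; so $\bfE$ being $c$-contracted implies $\bfE \circ G$ is $c$-contracted. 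The converse uses that the retractions $\rho_i$ for $\bfE \circ G$, being natural in the $\addcatc$-variable, specialize to retractions for $\bfE$ at objects in the image of $G$; but since this range may be too small, one argues directly: the unstable part of $c$-contracted (namely $\pi_i(\bfBHS)$ an isomorphism for $i \ge -c+1$) transports from the composite to $\bfE$ because it is detected at each $\cala^{\calc}$, and similarly for split injectivity of $\pi_i(\bfBHS_r)$, where we note that the retraction needed for $\bfE$ at $\cala$ can be obtained by choosing a constant $\calc$-diagram at $\cala$ (if $\calc$ has an object; otherwise the ``only if'' direction is vacuous as $\addcatc$ is empty).

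The delooping claim then follows by induction: assuming $\bfE[n] \circ G \cong (\bfE \circ G)[n]$ compatibly with $\bfs[n]$, the identification $\bfL(\bfE[n] \circ G) \cong (\bfL \bfE[n]) \circ G$ together with the fact that $\Omega$ commutes with precomposition by $G$ gives the identification at stage $n+1$. Since homotopy colimits are defined levelwise and $G$ does not interact with the indexing category, the hocolim defining $\bfE[\infty]$ commutes with precomposition by $G$, yielding the natural isomorphism $\bfE[\infty] \circ G \cong (\bfE \circ G)[\infty]$. The main bookkeeping obstacle is simply verifying that all of the structural natural transformations ($i_0$, $i_\pm$, $j_\pm$, $\ev_0^\pm$, and the cylinder map $\bfu$) are compatible with the isomorphisms given in the hypothesis; this is where the commutativity of the square in the statement is used, and once checked the rest of the proof is diagram chasing.
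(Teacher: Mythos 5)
The paper gives no proof of this lemma (the authors write ``From the definitions we also conclude:'' and then state it), so your expansion is genuinely useful and, in its main thrust, captures what the authors must have had in mind: transport all the Bass--Heller--Swan structure across the given natural isomorphisms and conclude formally. The ``only if'' direction and the identification $\bfE[\infty]\circ G \cong (\bfE\circ G)[\infty]$ are handled correctly by your argument, modulo the bookkeeping. Two caveats.

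First, you rightly observe that the stated hypothesis gives compatibility of $G$ only with $j_\pm$, whereas the proof also needs compatibility with $i_0$, $i_\pm$, $\ev_0^\pm$, with the natural transformation $T = \id_A\cdot t$ used to construct $\bfa$, and with the cylinder map $\bfu$ from Condition~\ref{con:condition_bfu} (so that $\bfE\circ G$ itself satisfies that Condition, and so that $\bfa$, $\bfBHS$, $\bfs$ etc.\ are well-defined and intertwined). None of these extra compatibilities follow purely formally from the $j_\pm$ compatibility; they should be added as hypotheses, or one should note that they hold on the nose for the concrete functors $z_t[s^{\pm 1}]$ and $z_t[s,s^{-1}]$ to which the lemma is applied. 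You flag this as ``something one must verify'' but somewhat underplay that the verification cannot be extracted from the $j_\pm$ hypothesis alone.

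Second, your proof of the converse direction (``$\bfE\circ G$ $c$-contracted $\implies$ $\bfE$ $c$-contracted'') does not work as written, and indeed this implication should not be expected to hold without some extra hypothesis such as essential surjectivity of $G$. The constant-diagram device does not help: if $\cala^\calc$ denotes the constant $\calc$-diagram at $\cala$, the hypothesis tells you something about $\bfE(G(\cala^\calc))$, not about $\bfE(\cala)$, and in the applications $G$ is a functor like $(\cala,\Phi)\mapsto \cala_\Phi[s]$ which is far from essentially surjective. Also, the ``otherwise $\addcatc$ is empty'' hedge is off — when $\calc$ is empty, $\addcatc$ is a one-object category, not empty. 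Fortunately only the direction ``$\bfE$ $c$-contracted $\implies$ $\bfE\circ G$ $c$-contracted'' and the delooping isomorphism are used in the paper (in Lemma~\ref{lem:nil_1_contracted} and in the proof of Theorem~\ref{the:BHS_decomposition_for_non-connective_K-theory}), and those parts of your argument are sound once the compatibility bookkeeping from the previous paragraph is supplied.
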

\end{remark}

We will always be interested in the case where $\calc$ is the groupoid with one object 
  and $\IZ$ as automorphism group of this object.
  We will write $\addcatt$ for $\addcat^{\calc}$ in this case. Then
  objects in $\addcatt$ are pairs  $(\cala,\Phi)$  consisting of a small additive
  category $\cala$ and an automorphism $\Phi \colon \cala \xrightarrow{\cong} \cala$ and a
  morphism $F \colon (\cala_0,\Phi_0) \to (\cala_1,\Phi_1)$ is a functor of additive
  categories $F \colon \cala_0 \to \cala_1$ satisfying $\Phi_1 \circ F = F \circ   \Phi_0$.

  Our main examples for functors $G$ as appearing in Lemma~\ref{lem:commuting_[infty]_and_F} will be the functors
  \begin{align*}
    z_t[s^{\pm1}] \colon \addcatt &\to \Spectra,\quad (\cala,\Phi)\mapsto \cala_\Phi[s^{\pm1}],\\
    z_t[s,s\inv]\colon \addcatt & \to \Spectra, \quad (\cala,\Phi)\mapsto
    \cala_\Phi[s,s\inv].  \end{align*} (The subscript ``$t$'' stands for ``twisted'',
  since these functors are the obvious twisted generalizations of the functors
  $z[t^{\pm1}]$ and $z[t,t\inv]$ from Section~\ref{sec:The_Bass-Heller-Swan_map}, with the
  variable $t$ replaced by $s$ for the sake of readability.)

  Let us go back to the situation of Section~\ref{sec:The_Bass-Heller-Swan_map} where we
  were given a functor $\bfE\colon \addcat\to\Spectra$ satisfying 
  Condition~\ref{con:condition_bfu}. Replacing $z[t^{\pm1}]$ and $z[t,t\inv]$ by their twisted
  versions throughout, we may define the twisted versions
  \[
   \bfB^t\bfE, \bfN^t_\pm\bfE, \bfZ^t_\pm\bfE, \bfZ\bfE\colon \addcatt\to \Spectra
  \]
  of the corresponding functors appearing in 
  Section~\ref{sec:The_Bass-Heller-Swan_map}. The role of the functor $\bfE\wedge(S^1)_+$ is now
  taken by the functor
  \[\bfT^t\bfE(\cala,\Phi) = \bfT_{\bfE(\Phi\inv)}\]
  given by the mapping torus of the map $\bfE(\Phi\inv)\colon \bfE(\cala)\to  \bfE(\cala)$. 
  Condition~\ref{con:condition_bfu} implies in this setting that there is a
  natural transformation
  \[\bfa^t\colon \bfT^t\bfE \to \bfZ^t \bfE.\]
  (It is induced by the natural transformation
  \[\id_A\cdot t\colon \Phi\inv(A)\to A\]
  between the functors $\Phi\inv\circ i$ and $i$, where $i\colon \cala\to  \cala_\Phi[t,t\inv]$ 
  is the canonical inclusion.)

In these terms, the natural transformation from 
Theorem~\ref{the:BHS_decomposition_for_connective_K-theory}~\ref{the:BHS_decomposition_for_connective_K-theory:BHS-iso} 
is just given by the twisted version of the Bass-Heller-Swan map
\[
\bfBHS^t\colon \bfT^t\bfE \vee \bfN^t_+\bfE \vee \bfN^t_-\bfE \to \bfZ^t\bfE
\]
applied to $\bfE=\bfK$.

Next we want to apply the delooping construction to the Nil-groups.

\begin{lemma}\label{lem:nil_1_contracted}
The functor $(\cala,\Phi)\mapsto \bfK(\Nil(\cala,\Phi))$ is 1-contracted.
\end{lemma}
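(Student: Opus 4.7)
The plan is to use the twisted Bass-Heller-Swan decomposition Theorem~\ref{the:BHS_decomposition_for_connective_K-theory}(ii), which produces a natural weak equivalence of functors on $\addcatt$
\[
\bfK(\Nil(\cala,\Phi)) \simeq \bfK(\cala) \vee \Omega\bfNK(\cala_\Phi[t])
\]
whenever $\cala$ is idempotent complete. Since being $1$-contracted is invariant under natural weak equivalences (by the $\addcatt$-version of Lemma~\ref{lem:f_equiv_implies_Bf,Zf,Nf_equiv}) and since Lemma~\ref{lem:c_contracted_and_vee} carries over to $\addcatt$ by Remark~\ref{rem:Extension_of_the_delooping_construction_to_diagrams}, the problem reduces to showing that each of the two summands is $1$-contracted as a functor on $\addcatt$.

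The first summand factors as $\bfK \circ U$ with $U\colon \addcatt \to \addcat$ the forgetful functor, and $U$ visibly commutes with $z[t^{\pm1}]$ and $z[t,t^{-1}]$, so Theorem~\ref{the:Bass-Heller-Swan_Theorem_for_bfK} together with Lemma~\ref{lem:commuting_[infty]_and_F} does the job. For the second summand, consider the homotopy fiber sequence
\[
\bfNK(\cala_\Phi[t]) \to \bfK(\cala_\Phi[t]) \xrightarrow{\ev_0^+} \bfK(\cala)
\]
whose natural splitting by the inclusion $\cala \hookrightarrow \cala_\Phi[t]$ produces a natural weak equivalence $\bfK(\cala_\Phi[t]) \simeq \bfK(\cala) \vee \bfNK(\cala_\Phi[t])$. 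The functor $G\colon \addcatt \to \addcat$, $(\cala,\Phi)\mapsto \cala_\Phi[t]$, satisfies the compatibility hypothesis of Lemma~\ref{lem:commuting_[infty]_and_F} via the evident natural isomorphism
\[
(\cala_\Phi[t])[s,s^{-1}] \cong (\cala[s,s^{-1}])_{\Phi[s,s^{-1}]}[t]
\]
and its variants in $s$ and $s^{-1}$ alone. Hence $\bfK \circ G$ is $1$-contracted, and together with the first summand and Lemma~\ref{lem:c_contracted_and_vee} this forces $\bfNK(\cala_\Phi[t])$ to be $1$-contracted as well. Finally, since $\Omega$ commutes up to natural weak equivalence with $\bfZ$, $\bfZ_{\pm}$, $\vee$, $(-)\wedge(S^1)_+$, and homotopy fibers, and shifts homotopy groups by one, applying $\Omega$ takes a $1$-contracted functor to a $2$-contracted one, in particular to a $1$-contracted one.

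The main obstacle is removing the idempotent completeness hypothesis built into Theorem~\ref{the:BHS_decomposition_for_connective_K-theory}. The plan is to imitate the reduction used in Theorem~\ref{the:Bass-Heller-Swan_Theorem_for_bfKIdem}: cofinality implies that the canonical map $(\cala,\Phi) \to (\Idem\cala,\Idem\Phi)$ induces $\pi_i$-isomorphisms on $\bfK(\Nil(-))$ and on its associated $\bfZ$-, $\bfN_{\pm}$-, and $\bfB_r$-constructions for $i \ge 1$, so that the isomorphism-range of $1$-contractedness descends from the idempotent complete case already treated; the remaining $\pi_0$ split injectivity of $\bfBHS_r$ is then produced directly by evaluation at $t = 1$ applied to $\Nil(\cala,\Phi)$, exactly as for $\bfKidem$.
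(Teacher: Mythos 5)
Your approach is essentially the same as the paper's: reduce via the twisted Bass--Heller--Swan decomposition to the two summands, and handle the failure of idempotent completeness by a cofinality comparison. One place where you are actually \emph{more} careful than the printed proof: you correctly observe that $\Omega$ takes a $c$-contracted functor to a $(c+1)$-contracted one (this is consistent with Lemma~\ref{E-n-contracted_implies_LE_is_n-contracted}, which asserts $\Omega\bfL\bfE$ is $(c+1)$-contracted when $\bfE$ is $c$-contracted, after showing $\bfL\bfE$ is still $c$-contracted). The paper's proof instead asserts that ``$\Omega$ decreases the degree of contraction by one,'' which has the wrong sign; had it said ``increases,'' the paper would have gotten the stronger (and needed) statement that $\Omega\bfNK$ is $2$-contracted, not merely $0$-contracted, and the final application of Lemma~\ref{lem:c_contracted_and_vee}~\ref{lem:c_contracted_and_vee:contracted} would then go through cleanly. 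Your version supplies exactly this.

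However, the reduction from the idempotent-complete case is sketched too loosely and as stated would not close the argument. The issue is that the functor $(\cala,\Phi)\mapsto\bfK(\Nil(\Idem\cala,\Idem\Phi))$ is only $0$-contracted, not $1$-contracted: its summand $\bfKidem\circ U$ is only $0$-contracted by Theorem~\ref{the:Bass-Heller-Swan_Theorem_for_bfKIdem}. Therefore ``$1$-contractedness descends from the idempotent complete case'' has nothing to descend. The paper avoids this by splitting off $\widetilde\Nil(\cala,\Phi)$ as the complement of $\bfK(\cala)$ inside $\bfK(\Nil(\cala,\Phi))$ (rather than of $\bfKidem(\cala)$), proving explicitly that the natural map $\widetilde\Nil(\cala,\Phi)\to\widetilde\Nil(\Idem\cala,\Idem\Phi)$ is a weak equivalence in all degrees --- in particular surjective on $\pi_0$, via the observation that any $((A,p),\phi)$ in $\widetilde\Nil_0(\Idem\cala,\Idem\Phi)$ also represents $((A,p),\phi)\oplus((A,1-p),0)$, which lifts to $\Nil(\cala,\Phi)$ --- and then pairing the resulting (better-than-$0$-contracted, as you correctly compute) $\widetilde\Nil$ with the $1$-contracted $\bfK\circ U$. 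Your proof should incorporate this comparison of $\widetilde\Nil$-terms rather than attempt to transfer contractedness of the whole $\Nil$-functor across idempotent completion; once you do, your $\Omega$-computation supplies the missing degree that the paper's printed text lacks.
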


\begin{proof}
The functors 
\begin{align*}
 (\cala,\Phi) & \mapsto \bfK(\Nil(\Idem \cala,\Idem\Phi),\\
 (\cala,\Phi) & \mapsto \bfK(\Idem\cala)\vee \Omega\bfN\bfK((\Idem\cala)_{\Idem\Phi}[t\inv])
 \end{align*}
 are, by Theorem~\ref{the:BHS_decomposition_for_connective_K-theory}~\ref{the:BHS_decomposition_for_connective_K-theory:Nil},
 naturally equivalent. In the second functor, the first summand is 0-contracted by 
 Theorem~\ref{the:Bass-Heller-Swan_Theorem_for_bfKIdem}. The second summand is 0-contracted by
 Theorem~\ref{the:Bass-Heller-Swan_Theorem_for_bfK} and Lemma~\ref{lem:commuting_[infty]_and_F}, 
noticing that $\Omega$ decreases the degree of
 contraction by one. From 
Lemma~\ref{lem:c_contracted_and_vee}~\ref{lem:c_contracted_and_vee:contracted} it follows that
 second functor is 0-contracted. Hence the first functor is 0-contracted, too.

There is a natural splitting
\[
\bfK(\Nil(\cala,\Phi)) \simeq \bfK(\cala) \vee \widetilde\Nil(\cala,\Phi)
\]
induced by the obvious projection $\Nil(\cala,\Phi)\to\cala$ and its section $A\mapsto (A,0)$. 
Next we show that the  map  induced by the inclusion
\[
\widetilde\Nil(\cala,\Phi)\to \widetilde\Nil(\Idem\cala,\Idem\Phi)
\]
is an equivalence of spectra. 

Denote $\widetilde\Nil_i:=\pi_i\widetilde\Nil$. In the diagram
\[\xymatrix@C-.6em{
0 \ar[r] & K_i(\cala) \ar[r] \ar[d] & K_i (\Nil(\cala,\Phi)) \ar[r]  \ar[d] & \widetilde\Nil_i(\cala,\Phi) \ar[d] \ar[r] & 0
\\
0 \ar[r] & K_i(\Idem \cala) \ar[r]  & K_i (\Nil(\Idem \cala,\Idem \Phi)) \ar[r]   & \widetilde\Nil_i(\Idem \cala,\Idem \Phi) \ar[r] & 0
}
\]
the left and middle vertical arrows are bijections for $i\geq 1$ and injections for $i=0$, by cofinality. 
Since both rows are split exact and the splittings are compatible with the vertical arrows,
also the right arrow is bijective for $i\geq 1$ and injective for $i=0$.

We are left to show surjectivity for $\widetilde\Nil_0$. So let $((A,p),\phi)$
represent an element of
$\widetilde\Nil_0(\Idem\cala,\Idem\Phi)$. Then the same element is also
represented by $((A,p),\phi)\oplus ((A,1-p),0)$ which clearly has a preimage in
$\widetilde\Nil_0(\cala,\Phi)$.

Now the following diagram commutes:
\[
\xymatrix{
\bfK(\Nil(\cala,\Phi)) \ar[d] \ar[r]^\simeq & \bfK(\cala_\Phi[t])\vee \widetilde\Nil(\cala,\Phi) \ar[d]
\\
\bfK(\Nil(\Idem \cala,\Idem \Phi))  \ar[r]^(.35)\simeq & \bfK((\Idem \cala)_{\Idem \Phi}[t])\vee \widetilde\Nil(\Idem \cala,\Idem \Phi)
}
\]
Thinking of all terms as functors in $(\cala,\Phi)$, we know that the lower left term is
0-contracted. It follows from 
Lemma~\ref{lem:c_contracted_and_vee}~\ref{lem:c_contracted_and_vee:contracted} that
$\widetilde\Nil(\cala,\Phi)$ is 0-contracted. Moreover the functor $(\cala,\Phi)\mapsto
\bfK(\cala_\Phi[t])$ is 1-contracted by Theorem~\ref{the:Bass-Heller-Swan_Theorem_for_bfK}
and Lemma~\ref{lem:commuting_[infty]_and_F}. Applying 
again~\ref{lem:c_contracted_and_vee}~\ref{lem:c_contracted_and_vee:contracted} proves the claim.
\end{proof}

Hence we may apply the delooping construction to the functor
\[(\cala,\Phi)\mapsto \bfK(\Nil(\cala,\Phi))\] to obtain a new functor
$\bfKNilinfty(\cala,\Phi)$. It follows from cofinality and Lemma~\ref{lem:alpha_bijective}
that the  map  induced by the inclusion
\[\bfKNilinfty(\cala,\Phi)\to \bfKNilinfty(\Idem\cala,\Idem\Phi)\]
is a weak equivalence.

\begin{proof}[Proof of Theorem~\ref{the:BHS_decomposition_for_non-connective_K-theory}]~%
\ref{the:BHS_decomposition_for_non-connective_K-theory:BHS-iso}
  As $\bfK$ satisfies Condition~\ref{con:condition_bfu}, the same is true for
  $\bfT^t\bfK$ and $\bfN^t_{\pm}\bfE$ and hence for their wedge. So we may apply the
  delooping construction to the transformation $\bfBHS^t$; using compatibility with the
  smash product, we get from 
 Theorem~\ref{the:BHS_decomposition_for_connective_K-theory}~\ref{the:BHS_decomposition_for_connective_K-theory:BHS-iso} 
 a natural homotopy  equivalence
\begin{equation}\label{eq:delooped_BHS_map}
\bfBHS^t[\infty]\colon (\bfT^t\bfK) [\infty] \vee (\bfN^t_+\bfK)[\infty] 
\vee (\bfN^t_-\bfK)[\infty] \xrightarrow{\simeq} (\bfZ^t\bfK)[\infty].
\end{equation}

An application of Lemma~\ref{lem:alpha_bijective} to the functors $z_t[s,s\inv]$ and $z_t[s^{\pm1}]$ implies that 
\begin{equation}\label{eq:canonical_isos}
(\bfZ^t\bfK)[\infty] \cong \bfZ^t \bfKinfty, \quad (\bfN_{\pm}^t\bfK)[\infty]\cong\bfN_\pm^t\bfKinfty.  
\end{equation}
By definition, the mapping torus is a homotopy pushout; the compatibility of the delooping construction with homotopy colimits 
(Theorem~\ref{the:Compatibility_of_the_delooping_construction_with_homotopy_colimits}) implies that the canonical transformation
\[\alpha\colon \bfT^t \bfKinfty \to (\bfT^t \bfK)[\infty]\]
is a weak equivalence. Thus, from \eqref{eq:delooped_BHS_map} we obtain a natural homotopy equivalence 
\[
  \bfa[\infty]\circ \alpha \vee \bfbinfty_+\vee \bfbinfty_- \colon 
  \bfT_{\bfKinfty(\Phi^{-1})} \vee \bfNKinfty(\cala_{\Phi}[t]) \vee \bfNKinfty(\cala_{\Phi}[t^{-1}]) 
  \xrightarrow{\simeq}   \bfKinfty(\cala_{\Phi}[t,t^{-1}]);
  \]

It remains to show that the map $\bfa[\infty]\circ \alpha$ defined in this way agree with the map $\bfainfty$, that is, the map
\[\bfa^t\colon \bfT^t\bfE\to \bfZ^t\bfE\]
for $\bfE=\bfKinfty$ as a functor which satisfies Condition~\ref{con:condition_bfu}. In fact,  the diagram
\[\xymatrix{
\bfT^t (\Omega\bfL\bfE) \ar[rr]^\simeq \ar[d]^{\bfa_{\bfL\bfE}} && \Omega\bfL\bfT^t \bfE \ar[d]^{\bfL\bfa_{\bfE}}\\
\bfZ^t(\Omega\bfL\bfE) \ar[rr]^\cong && \Omega\bfL\bfZ^t\bfE   
}\]
with the canonical horizontal arrows is commutative. Iterating the construction shows that
\[\xymatrix{
\bfT^t (\bfE[\infty]) \ar[rr]^\simeq \ar[d]^{\bfa_{\bfE[\infty]}} && (\bfT^t \bfE)[\infty] \ar[d]^{\bfa_{\bfE}[\infty]}\\
\bfZ^t(\bfE[\infty]) \ar[rr]^\cong && (\bfZ^t\bfE)[\infty]   
}\]
is also commutative. The lower horizontal isomorphisms is \eqref{eq:canonical_isos};
comparing with the proof of
Theorem~\ref{the:Compatibility_of_the_delooping_construction_with_homotopy_colimits} shows
that the upper horizontal map agrees with $\alpha$. This implies the claim.
\\[2mm]~\ref{the:BHS_decomposition_for_non-connective_K-theory:Nil} Here we use that
$K$-theory may be naturally defined on chain complexes: If $\cala$ is an additive
category, we denote by $\Ch(\cala)$ the category of bounded chain complexes over
$\cala$. This category is naturally a Waldhausen category where the weak equivalences are
the chain homotopy equivalences and the cofibrations are the maps which are level-wise
inclusions into a direct summand. As for any Waldhausen category, the $K$-theory spectrum
$\bfK(\Ch(\cala))=:\bfKch(\cala)$ is defined. By the Gillet-Waldhausen theorem, $\bfKch$
is naturally equivalent to $\bfK$ \cite[Proposition~6.1]{Cardenas-Pedersen(1997)}.

Given $(A,f)$ in $\Nil(\cala,\Phi)$, denote by $\chi(A,f)$ the following 1-dimensional chain complex in $\cala_\Phi[t]$:
\[\Phi(A) \xrightarrow{t-f} A\]
This leads to a functor $\chi\colon \Nil(\cala,\Phi)\to \Ch(\cala_\Phi[t])$; 
this functor induces a map
\[\bfK(\chi)\colon \bfK(\Nil(\cala,\Phi)) \to \bfKch(\cala_\Phi[t]).\]
In~\cite[Section~8]{Lueck-Steimle(2013twisted_BHS)} 
it is shown that if $\cala$ is idempotent complete, then $\bfK(\chi)$  is part of a homotopy fiber sequence
\begin{equation}\label{eq:fiber_sequence_with_nil}
 \bfK(\Nil(\cala,\Phi)) \to \bfKch(\cala_\Phi[t^{-1}]) \to \bfKch(\cala_\Phi[t,t\inv]).  
\end{equation}

Now each of the terms in this sequence, as a functor in $(\cala,\Phi)$, satisfies Condition~\ref{con:condition_bfu} 
and the maps in the sequence are compatible transformations. Applying the delooping construction 
to each of the terms leads to a sequence
\begin{equation}\label{eq:fiber_sequence_with_nil_infty}
\bfKNilinfty(\cala,\Phi) \to \bfKchinfty(\cala_\Phi[t]) \to \bfKchinfty(\cala_\Phi[t,t\inv]).  
\end{equation}
Note that by the Gillet-Waldhausen Theorem and Lemma~\ref{lem:commuting_[infty]_and_F}, 
the middle and right terms are naturally homotopy equivalent to $\bfKinfty(\cala_\Phi[t])$ and $\bfKinfty(\cala_\Phi[t,t\inv])$, respectively.

We will show that this sequence is a fibration sequence for any $(\cala,\Phi)$. 
To do this, consider the commutative diagram
\[\xymatrix{
\bfK\circ\Nil \ar[r]\ar[d] & \bfKch\circ \bfZ_+^t \ar[r] \ar[d] & \bfKch\circ \bfZ^t \ar[d]
\\
\bfK\circ\Nil \circ \Idem_t\ar[r]& \bfKch\circ \bfZ_+^t\circ \Idem_t \ar[r]  & \bfKch\circ \bfZ^t\circ \Idem_t
}\]
of functors whose top line at the object $(\cala,\Phi)$ is just \eqref{eq:fiber_sequence_with_nil}, 
and where we define $\Idem_t(\cala,\Phi):=(\Idem(\cala),\Idem(\Phi))$. Notice that the bottom 
line of this diagram, at any object $(\cala,\Phi)$, is a fibration sequence.

We claim that all the functors in this diagram are 1-contracted. In fact, we showed in 
Lemma~\ref{lem:nil_1_contracted} (and its proof) that the left terms are 1-contracted. 
The middle and right upper terms are 1-contracted as the functor $\bfK\simeq \bfKch$ is 1-contracted. 

At the object $(\cala,\Phi)$, the middle vertical arrow is given by the  map  induced by the inclusion
\[\bfKch(\cala_\Phi[t])\to \bfKch((\Idem\cala)_{\Idem(\Phi)}[t]).\]
In particular, by cofinality, it induces an isomorphism in degrees $\geq1$. When precomposed with $Z_+$, it becomes
\begin{equation}\label{eq:transformation_at_Z_plus}
\bfKch((\cala[s])_\Phi[t])\to \bfKch((\Idem(\cala[s]))_{\Idem(\Phi)}[t].
\end{equation}
Now the idempotent completion of $(\cala[s])_\Phi[t]$ is also an idempotent completion of
$(\Idem(\cala[s]))_{\Idem(\Phi)}[t]$. As $\pi_n\bfK$ is invariant under idempotent
completions, we see that \eqref{eq:transformation_at_Z_plus} induces isomorphisms in homotopy
groups of degree $\geq 1$.

The same argument works with $Z_+$ replaced by $Z_-$ and $Z$. We conclude that the
restricted Bass-Heller-Swan maps for the upper and the lower middle terms in the diagram
are isomorphic in degree $\geq1$. Smashing with $(S^1)_+$ preserves connectivity; so the
unrestricted Bass-Heller-Swan maps for the middle terms are also isomorphic in degrees
$\geq2$. Thus, to show that the lower middle term is 1-contracted, we are left to show
split injectivity of the restricted Bass-Heller-Swan map in degree 0.

This map is given by
\begin{multline*}
\pi_0 \bfK((\Idem(\cala)_\Phi[t])\oplus \pi_0 \bfK(\Idem(\cala[s])_\Phi[t]) \oplus \pi_0 \bfK(\Idem(\cala[s\inv])_\Phi[t]) \\
\to \pi_0 \bfK((\Idem(\cala[s,s\inv])_\Phi[t]).
\end{multline*}
Split injectivity holds as $\pi_0\bfK(\cala)\cong\pi_0\bfK(\cala_\Phi[t])$ for any
$(\cala,\Phi)$ (as in the proof of Theorem~\ref{the:Bass-Heller-Swan_Theorem_for_bfK}),
and $\bfKidem$ is 0-contracted.

Thus the lower middle term is 1-contracted and the middle vertical map induces an
isomorphism in degrees $\geq1$. The corresponding statements hold for the right terms in
the diagram, by the very same arguments.

Applying the delooping construction to the whole diagram, we obtain a new diagram whose
top line, at $(\cala,\Phi)$ is given by \eqref{eq:fiber_sequence_with_nil_infty}, and
whose bottom line is still a fiber sequence by 
Theorem~\ref{the:Compatibility_of_the_delooping_construction_with_homotopy_colimits}. 
By Lemma~\ref{lem:alpha_bijective} all the vertical maps are weak homotopy equivalences. We
conclude that the upper line is a fibration sequence. This is what we claimed, for the
upper line, at the object $(\cala,\Phi)$, is just~\eqref{eq:fiber_sequence_with_nil_infty}.

In~\cite[Section~3]{Lueck-Steimle(2013twisted_BHS)}  it is shown that part~\ref{the:BHS_decomposition_for_connective_K-theory:Nil} of 
Theorem~\ref{the:BHS_decomposition_for_connective_K-theory} follows formally 
from~\eqref{eq:fiber_sequence_with_nil}. The same arguments apply to prove that 
part~\ref{the:BHS_decomposition_for_non-connective_K-theory:Nil} of 
Theorem~\ref{the:BHS_decomposition_for_non-connective_K-theory} follows from
\eqref{eq:fiber_sequence_with_nil_infty}.
\end{proof}

\begin{remark}[Schlichting's non-connective $K$-theory spectrum for exact categories]
  \label{rem:Schlichting}
  Notice that $\Nil(\cala,\Phi)$ is an exact category whose exact structure does not come
  from the structure of an additive category.
  Schlichting~\cite{Schlichting(2004deloop_exact)} has defined non-connective $K$-theory
  for exact categories.  It is very likely that Schlichting's non-connected $K$-theory
  applied to the exact category $\Nil(\cala,\Phi)$ is weakly homotopy equivalent to our non-connective
  version $\bfKNilinfty(\cala,\Phi)$ in a natural way. This would follow from 
  Corollary~\ref{cor:turning_a_transformation_into_a_weak_equivalence} 
  if Schlichting's non-connective  $K$-theory of $\Nil(\cala,\Phi)$ is $\infty$-contracted, or, equivalently,
  has a Bass-Heller-Swan decomposition. 
  
  It is conceivable that the twisted Bass-Heller-Swan decomposition for connective $K$-theory, which is described
  in Theorem~\ref{the:BHS_decomposition_for_connective_K-theory} and whose proof is given
  in~\cite{Lueck-Steimle(2013twisted_BHS)}, can be extended directly to the
  non-connective setting described in
  Theorem~\ref{the:BHS_decomposition_for_non-connective_K-theory} using Schlichting's
  non-connective version of $K$-theory for exact categories
and his localization theorem instead of Waldhausen's approximation and fibration theorems.

\end{remark}


\typeout{----------  Section 7: Filtered colimits -----------------}

\section{Filtered colimits}
\label{sec:Filtered_colimits}

Suppose that the small category $\calj$ is filtered, i.e., for any two objects $j_0$ and
$j_1$ there exists a morphism $u \colon j \to j'$ in $\calj$ with the following property:
There exist morphisms from both $j_0$ and $j_1$ to $j$, and for any two morphisms 
$u_0 \colon j_0 \to j$ and $u_1 \colon j_1 \to j$ we have $u \circ j_0 = u \circ j_1$.  Given a
functor $\cala \colon \calj \to \addcat$, its \emph{colimit} $\colim \cala$ in the
category of small categories exists and is in a natural way an additive category.

(We do not need an explicit description, but one can see that
\[\ob(\colim \cala) = \colim (\ob(\cala))\]
and that the abelian group of morphisms from $A$ to $B$ is given by 
\[(\colim\cala)(A,B) = \colim_{j\in \calj}\biggl( \bigoplus_{A_j, B_j} \cala(j)(A_j, B_j)\biggr)\]
where the coproducts range over all objects $A_j, B_j\in\cala(j)$ projecting to $A$ resp.~$B$.)


For a functor $\bfE\colon\addcat\to\Spectra$, we say that \emph{$E$ commutes with filtered
  colimits} if for any $\cala\colon \calj\to\addcat$ the canonical map
\[
\hocolim \bfE \circ \cala \to \bfE(\colim\cala)\]
is a weak homotopy equivalence. By 
Lemma~\ref{lem:diagrams_of_spectra}~\ref{lem:diagrams_of_spectra:pi_and_filtered} 
this is equivalent to saying that
\[
\colim \pi_* \bfE\circ\cala \xrightarrow{\cong} \pi_*\bfE(\colim\cala).
\]

\begin{proposition}\label{prop:compatibility_with_filtered_colimits}
  Suppose that $\bfE$ commutes with filtered colimits and satisfies 
 Condition~\ref{con:condition_bfu}. Then $\bfE[\infty]$ commutes with filtered colimits.
\end{proposition}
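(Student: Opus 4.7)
The plan is to check that all the auxiliary constructions used in building $\bfE[\infty]$ preserve the property of commuting with filtered colimits, then to conclude by induction that each $\bfE[n]$ has this property, and finally to invoke commutativity of homotopy colimits to handle the passage to $\bfE[\infty]$.

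First I would verify that the functors $z[t], z[t^{-1}], z[t,t^{-1}]\colon \addcat\to\addcat$ commute with filtered colimits of additive categories. This follows directly from the explicit description of $\colim\cala$ given before the statement: since morphisms in $\cala[t,t^{-1}]$ are \emph{finite} Laurent series in morphisms of $\cala$, taking formal Laurent series commutes with the filtered colimit formula for morphism groups. Consequently, if $\bfE$ commutes with filtered colimits, so do the composites $\bfZ\bfE$ and $\bfZ_\pm\bfE$.

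Next I would show that all remaining construction steps preserve commutation with filtered colimits. The key fact is that filtered (homotopy) colimits of spectra commute with finite homotopy limits (in particular with homotopy fibers and with $\Omega$) and with arbitrary homotopy colimits (in particular with wedges, smash products by a fixed space, and homotopy pushouts). Concretely:
\begin{itemize}
\item $\bfN_\pm\bfE$ is a homotopy fiber, so inherits the property from $\bfZ_\pm\bfE$ and $\bfE$.
\item $\bfB_r\bfE$ and $\bfB\bfE$ are wedges of spectra of the form $\bfE$, $\bfE\wedge(S^1)_+$ and $\bfN_\pm\bfE$, each of which inherits the property.
\item $\bfL\bfE$ is defined as a homotopy pushout (see diagram \eqref{diagram_for_bfL}) of functors that inherit the property, so $\bfL\bfE$ inherits it as well.
\item $\Omega\bfL\bfE$ inherits it because $\Omega$ commutes with filtered colimits of spectra.
\end{itemize}
By induction on $n$, it follows that for each $n\geq 0$ the functor $\bfE[n]$ commutes with filtered colimits, and the structure map $\bfs[n]\colon \bfE[n]\to\bfE[n+1]$ is a natural transformation in $\cala$.

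Finally, given a filtered diagram $\cala\colon \calj\to\addcat$, one obtains a commutative diagram
\[
\xymatrix{
\hocolim_\calj \bigl(\hocolim_n \bfE[n]\circ \cala\bigr) \ar[r]^-{\cong} \ar[d]
& \hocolim_n\bigl(\hocolim_\calj \bfE[n]\circ \cala\bigr) \ar[d]^{\simeq}\\
\hocolim_\calj \bfE[\infty]\circ\cala \ar[r]
& \bfE[\infty](\colim_\calj \cala)
}
\]
where the top horizontal map is the interchange isomorphism of Lemma~\ref{lem:diagrams_of_spectra}~\ref{lem:diagrams_of_spectra:commute}, the right vertical map is a weak equivalence since each $\bfE[n]$ commutes with filtered colimits, and the left vertical identification uses the definition of $\bfE[\infty]$. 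This yields the desired weak equivalence $\hocolim_\calj \bfE[\infty]\circ\cala \xrightarrow{\simeq}\bfE[\infty](\colim_\calj \cala)$.

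The main technical point to verify carefully is that $\Omega$, homotopy fibers, and homotopy pushouts indeed commute with filtered homotopy colimits of spectra in the naive model of Section~\ref{sec:The_Bass-Heller-Swan_map}; this can be checked on homotopy groups using Lemma~\ref{lem:diagrams_of_spectra}~\ref{lem:diagrams_of_spectra:pi_and_filtered}, together with the exactness of filtered colimits of abelian groups applied to the relevant long exact sequences.
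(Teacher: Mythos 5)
Your proof is correct and follows essentially the same strategy as the paper: show first that $z[t^{\pm1}]$ and $z[t,t^{-1}]$ commute with filtered colimits of additive categories, then use Lemma~\ref{lem:diagrams_of_spectra} to conclude inductively that each $\bfE[n]$ commutes with filtered colimits, and finally interchange the two homotopy colimits via Lemma~\ref{lem:diagrams_of_spectra}~\ref{lem:diagrams_of_spectra:commute}. The only minor remark is that the paper's Lemma~\ref{lem:diagrams_of_spectra} already gives commutation of $\hocolim$ with $\Omega$ and homotopy fibers over an arbitrary index category (a consequence of stability of spectra), so you do not need to restrict those commutations to the filtered case as you do in your final paragraph.
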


Quillen's (or Waldhausen's) connective $K$-theory spectrum $\bfK$ commutes with filtered 
colimits~\cite[(9) on page~20]{Quillen(1973)}. Letting $K^{\infty}_i(\cala) := \pi_i \bigl(\bfKinfty(\cala)\bigr)$, we conclude:

\begin{corollary}\label{cor:K_upper_infty_commutes_with_colimits}
If $\calj$ is filtered and $\cala \colon \calj \to \addcat$ is a covariant functor, then the canonical homomorphism
  \[
  \colim_{\calj} K^{\infty}_i(\cala) \to K^{\infty}_i\bigl(\colim_{\calj} \cala\bigr)
  \]
  is bijective for all $i \in \IZ$.
\end{corollary}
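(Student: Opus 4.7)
The plan is to deduce this as an essentially immediate consequence of Proposition~\ref{prop:compatibility_with_filtered_colimits} applied to the connective $K$-theory functor $\bfK\colon \addcat \to \Spectra$.

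First I would verify the hypotheses of Proposition~\ref{prop:compatibility_with_filtered_colimits} for $\bfK$. Condition~\ref{con:condition_bfu} has already been established for $\bfK$ in the discussion immediately following Condition~\ref{con:condition_bfu} (by~\cite[Proposition~1.3.1]{Waldhausen(1985)}). That $\bfK$ commutes with filtered colimits of additive categories is a standard property of Quillen's (equivalently Waldhausen's) connective $K$-theory, already cited in the text above the corollary via~\cite[(9) on page~20]{Quillen(1973)}.

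Next, by Definition~\ref{def:non-connective_algebraic_K-theory_spectrum_bfK[infty]} we have $\bfKinfty = \bfK[\infty]$, so Proposition~\ref{prop:compatibility_with_filtered_colimits} yields that for any filtered $\calj$ and any functor $\cala\colon \calj \to \addcat$ the canonical map of spectra
\[
\hocolim_{\calj} \bfKinfty \circ \cala \;\longrightarrow\; \bfKinfty\bigl(\colim_{\calj} \cala\bigr)
\]
is a weak homotopy equivalence. Finally, applying Lemma~\ref{lem:diagrams_of_spectra}~\ref{lem:diagrams_of_spectra:pi_and_filtered} to commute $\pi_i$ past the homotopy colimit on the left-hand side converts this into the claimed isomorphism on $K^{\infty}_i$ for every $i \in \IZ$.

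Since the whole content has been packaged into Proposition~\ref{prop:compatibility_with_filtered_colimits}, there is no real obstacle at this stage; the substantive step (which happens inside the proof of the proposition, not here) is to check that the delooping construction $\bfE \mapsto \bfE[\infty]$ preserves the property of commuting with filtered colimits. That in turn would rely on the compatibility of $\hocolim$ with the functors $\bfZ$, $\bfZ_{\pm}$, $\bfN_{\pm}$, $\bfB$, $\bfL$ and the iterated application $\bfs[n]$ (as was already exploited in the proof of Theorem~\ref{the:Compatibility_of_the_delooping_construction_with_homotopy_colimits}), together with the fact that colimits along $\calj$ of additive categories satisfy $z[t^{\pm 1}](\colim_{\calj}\cala) = \colim_{\calj}(z[t^{\pm 1}]\circ\cala)$ and similarly for $z[t,t^{-1}]$, which is immediate from the explicit description of $\colim\cala$ recorded in Section~\ref{sec:Filtered_colimits}.
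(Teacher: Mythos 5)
Your proof is correct and follows exactly the paper's intended argument: Proposition~\ref{prop:compatibility_with_filtered_colimits} applied to $\bfK$ (which satisfies Condition~\ref{con:condition_bfu} and commutes with filtered colimits by Quillen), followed by Lemma~\ref{lem:diagrams_of_spectra}~\ref{lem:diagrams_of_spectra:pi_and_filtered} to pass from the spectrum-level weak equivalence to isomorphisms on homotopy groups. The final paragraph recapitulating the inner workings of Proposition~\ref{prop:compatibility_with_filtered_colimits} is redundant but harmless.
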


Schlichting proves compatibility of negative $K$-theory with colimits
in~\cite[Corollary~5]{Schlichting(2006)},  cf. Section~\ref{sec:Filtered_colimits}.

\begin{proof}[Proof of Proposition~\ref{prop:compatibility_with_filtered_colimits}]
  Let $\cala\colon \calj\to\addcat$. It follows from the definition of the categories
  $\cala[t^{\pm1}]$ and $\cala[t,t\inv]$ that the the canonical functors
  \[\colim_j (\cala(j)[t^{\pm1}])\to (\colim \cala)[t^{\pm1}], \quad \colim_j
  (\cala(j)[t,t\inv])\to (\colim \cala)[t,t\inv]\] are isomorphisms. It follows that the
  functors $\bfZ_\pm \bfE$ and $\bfZ \bfE$ commute with filtered colimits, too.

Lemma~\ref{lem:diagrams_of_spectra} implies then that for $\bfF\in\{\bfN_\pm, \bfB_r, \bfB, \bfL\}$, $\bfF \bfE$ 
 commutes with filtered colimits. The square
\[\xymatrix{
   {\hocolim \bfE\circ\cala} \ar[rr]^{\hocolim \bfs} \ar[d]^\simeq
   && {\hocolim \Omega\bfL \bfE\circ\cala} \ar[d]^\simeq 
   \\
   \bfE(\colim \cala) \ar[rr]^{\bfs}
   && \Omega \bfL\bfE(\colim\cala)
}\]
with canonical vertical arrows is commutative. Iterating this construction and applying 
Lemma~\ref{lem:diagrams_of_spectra}~\ref{lem:diagrams_of_spectra:commute}, we see that
\[\hocolim \bfE[\infty]\circ\cala \to \bfE[\infty](\colim\cala)\]
is a weak equivalence.
\end{proof}

\begin{remark}
  In~\cite[Theorem~1.8~(i) on page~43]{Bartels-Echterhoff-Lueck(2008colim)} it is shown
  that the Farrell-Jones Isomorphism Conjecture is inherited under filtered colimits of
  groups (with not necessarily injective structure maps), but only for rings as
  coefficients. The same statement remains true if one allows coefficients in additive
  categories, as stated in~\cite[Corollary~0.8]{Bartels-Lueck(2009coeff)}.  The proof
  of~\cite[Theorem~1.8~(i) on page~43]{Bartels-Echterhoff-Lueck(2008colim)} in the $K$-theory case
  carries over   directly as soon as one has Theorem~\ref{cor:K_upper_infty_commutes_with_colimits}
  available, it is needed in the extension of~\cite[Lemma~6.2 on page~61]{Bartels-Lueck(2009coeff)}
  to additive categories. The analog of  Theorem~\ref{cor:K_upper_infty_commutes_with_colimits} for $L$-theory
  is obvious since the $L$-groups of an additive category with involution can be defined elementwise
  instead of referring to the homotopy groups of a spectrum.
\end{remark}


\typeout{----------  Section 8: Homotopy K-theory -----------------}

\section{Homotopy $K$-theory}
\label{sec:Homotopy_K-theory}

Let $\bfE \colon \addcat \to \Spectra$ be a (covariant) functor and let
$F_+ \colon \addcat \to \addcat$ be the functor sending $\cala$ to $\cala[t]$. Denote by 
$\bfF_+\bfE \colon \addcat \to \addcat$ the composite $\bfE \circ F_+$.  The natural inclusion 
$i_+ \colon \cala \to \cala[t]$, which sends a morphism $f \colon A \to B$ to 
$f \cdot t^0 \colon A \to B$, induces a natural transformation $\bfi_+ \colon \bfE \to \bfF_+\bfE$ 
of functors $\addcat \to \Spectra$.  Define inductively the functor 
$\bfF_+^n\colon \addcat \to\Spectra$ by $\bfF^n_+ \bfE := \bfF_+(\bfF^{n-1}_+ \bfE)$ starting with 
$\bfF_+^0 = \bfE$. Define inductively $\bfi^n_+ \colon \bfF^{n-1}\bfE \to \bfF^n\bfE$ by 
$\bfi^n_+ = \bfi_+(\bfi_+^{n-1})$ starting with $\bfi^1_+ := \bfi_+$.  Thus we obtain a sequence of
transformations of functors $\addcat \to \Spectra$
\[
\bfE = \bfF^0_+ \bfE \xrightarrow{\bfi_+^1} \bfF_+^1\bfE \xrightarrow{\bfi_+^2} \bfF_+^2 \bfE
\xrightarrow{\bfi_+^3} \cdots.
\]

\begin{definition}[Homotopy stabilization $\bfH\bfE$]
 \label{def:homotopy_stabilization}
  Define the \emph{homotopy stabilization} 
  \[\bfH\bfE \colon \addcat \to \Spectra\]  
  of  $\bfE \colon \addcat \to \Spectra$ to be the homotopy
  colimit of the sequence above. Let
  \[
  \bfh \colon \bfE \to \bfH\bfE
  \]
  be given by the zero-th structure map of the homotopy colimit.
  We call $\bfE$ \emph{homotopy stable} if $\bfh$ is a weak equivalence.
\end{definition}

This construction has the following basic properties. 
Let $\ev_0^+ \colon \cala_{\Phi}[t] \to \cala$ be the functor sending $\sum_{i \ge 0} f_i \cdot t^i$ to $f_0$.
\begin{lemma}\label{lem:properties_of_bfHbfE}
Let $\bfE \colon \addcat \to \Spectra$ be a covariant functor.

\begin{enumerate}

\item \label{lem:properties_of_bfHbfE:bfHbfE_is_homotopy_stable}
$\bfH\bfE$ is homotopy stable;

\item \label{lem:properties_of_bfHbfE:twisted_homotopy_stable}
Suppose that $\bfE$ is homotopy stable. Let $\cala$ be any additive category with
an automorphism $\Phi \colon \cala \xrightarrow{\cong} \cala$.  Then the maps 
\begin{eqnarray*}
\bfE(\ev_0^+) \colon \bfE(\cala_{\Phi}[t]) & \xrightarrow{\simeq} & \bfE(\cala);
\\
\bfE(\bfi^+) \colon \bfE(\cala)  & \xrightarrow{\simeq} &  \bfE(\cala_{\Phi}[t]),
\end{eqnarray*}
are weak homotopy equivalences.
\end{enumerate}
\end{lemma}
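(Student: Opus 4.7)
For part~(i), I plan to exploit the fact that $\bfF_+ = \bfE\circ z[t]$ is a \emph{pre}composition, so it commutes with pointwise homotopy colimits of functors $\addcat\to \Spectra$: for any diagram $(\bfE_j)_{j\in\calj}$,
\[
\bfF_+\bigl(\hocolim_j \bfE_j\bigr)(\cala) = \hocolim_j \bfE_j(\cala[t]) = \hocolim_j (\bfF_+\bfE_j)(\cala).
\]
Applied to the defining sequence of $\bfH\bfE$, this yields $\bfF_+\bfH\bfE \cong \hocolim_n \bfF^{n+1}_+\bfE$. By cofinality of the shift $\IN\to\IN$, $n\mapsto n+1$, the inclusion of the shifted telescope into the full telescope is a weak equivalence $\hocolim_n \bfF^{n+1}_+\bfE \xrightarrow{\simeq}\bfH\bfE$, and an inspection of telescope models shows that $\bfi_+\colon \bfH\bfE\to \bfF_+\bfH\bfE$ is a section of this cofinality equivalence, hence itself a weak equivalence. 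Iterating gives weak equivalences $\bfi^n_+\colon \bfH\bfE\to \bfF^n_+\bfH\bfE$ for every~$n$, and taking the homotopy colimit over~$n$ shows that $\bfh\colon \bfH\bfE\to\bfH\bfH\bfE$ is a weak equivalence.

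For part~(ii), I would observe that $\ev_0^+\circ i_+=\id_\cala$ implies $\bfE(\ev_0^+)\circ \bfE(i_+)=\id$, so it suffices to show that the composite $\bfE(i_+)\circ \bfE(\ev_0^+)$ equals the identity of $\bfE(\cala_\Phi[t])$ in the homotopy category. The key construction is a ``scaling'' functor of additive categories
\[
G\colon \cala_\Phi[t]\longrightarrow \bigl(\cala_\Phi[t]\bigr)[s],\qquad \sum\nolimits_k f_k\cdot t^k\longmapsto \sum\nolimits_k f_k\cdot(st)^k,
\]
where $(\cala_\Phi[t])[s]$ is the \emph{untwisted} polynomial extension of $\cala_\Phi[t]$ in a new variable~$s$. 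Compatibility of $G$ with the twisted composition follows from the fact that $s$ commutes with everything in $(\cala_\Phi[t])[s]$, and direct inspection gives
\[
\ev_0^s\circ G = i_+\circ\ev_0^+\qquad\text{and}\qquad \ev_1^s\circ G = \id_{\cala_\Phi[t]},
\]
where $\ev_\varepsilon^s$ denotes evaluation at $s=\varepsilon\in\{0,1\}$.

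By homotopy stability of $\bfE$ applied to $\cala_\Phi[t]$, the untwisted inclusion $i_+^s\colon \cala_\Phi[t]\to (\cala_\Phi[t])[s]$ induces a weak equivalence $\bfE(i_+^s)$. Since $\ev_0^s\circ i_+^s=\id=\ev_1^s\circ i_+^s$, both $\bfE(\ev_0^s)$ and $\bfE(\ev_1^s)$ are left inverses in $\Spectra$ to the weak equivalence $\bfE(i_+^s)$; hence they are themselves weak equivalences and coincide with $\bfE(i_+^s)^{-1}$ in the homotopy category. It follows that
\[
\bfE(i_+)\circ \bfE(\ev_0^+)=\bfE(\ev_0^s)\circ \bfE(G)=\bfE(\ev_1^s)\circ \bfE(G)=\id
\]
in the homotopy category of spectra, which establishes~(ii).

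The main technical step is the construction of the scaling functor $G$ in the \emph{twisted} setting and the verification that it respects the twisted composition law of $\cala_\Phi[t]$; once that is in hand the argument is purely formal, relying only on homotopy stability and uniqueness (up to weak equivalence) of inverses in the homotopy category of spectra.
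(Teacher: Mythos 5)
Your proposal is correct and follows essentially the same route as the paper: part~(i) is the same cofinality-of-the-shift argument on the telescope, and in part~(ii) your scaling functor $G$ is precisely the paper's functor $j_s$ (with $\ev_0^s$, $\ev_1^s$ matching its $\ev_{s=0}$, $\ev_{s=1}$), and the formal comparison of the two one-sided inverses to $\bfE(i_+^s)$ is the same argument. The only difference is cosmetic (working in the homotopy category of spectra rather than on $\pi_*$).
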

\begin{proof}~\ref{lem:properties_of_bfHbfE:bfHbfE_is_homotopy_stable}
This follows from the definitions since
the obvious map from the homotopy colimit of 
\[
\bfF^0_+ \bfE \xrightarrow{\bfi^1_+} \bfF^1_+\bfE \xrightarrow{\bfi^2_+} \bfF^2_+ \bfE
\xrightarrow{\bfi^3_+} \cdots
\]
to 
\[
\bfF_+^1 \bfE \xrightarrow{\bfi_+^2} \bfF_+^2\bfE \xrightarrow{\bfi_+^3} \bfF^3_+\bfE
\xrightarrow{\bfi^4_+} \cdots
\]
given by applying $\bfi_+$ in each degree is a weak homotopy equivalence.
\\[2mm]~\ref{lem:properties_of_bfHbfE:twisted_homotopy_stable}
Consider an additive category $\cala$ with an automorphism $\Phi \colon \cala \xrightarrow{\cong} \cala$.
Define a functor $j_s \colon \cala_{\Phi}[t] \to \bigl(\cala_{\Phi}[t]\bigr)[s]$ by sending $\sum_{i \ge 0 } f_i \cdot t^i$
to $\sum_{i \ge 0} \bigl(f_i \cdot t^i\bigr) \cdot s^i$. Let
$\ev_{s = 0}$ and $\ev_{s=1}$ respectively be the functors $\bigl(\cala_{\Phi}[t]\bigr)[s] \to  \cala_{\Phi}[t]$
sending a morphism $\sum_{i \ge 0} \bigl(\sum_{j_i \ge 0} f_{j_i,i} \cdot t^{j_i}\bigr) \cdot s^i \colon A \to B$ to
$\sum_{j_0 \ge 0} f_{j_0,0} \cdot t^{j_0} \colon A \to B$ and 
$\sum_{i \ge 0} \sum_{j_i \ge 0} f_{j_i,i} \cdot t^{j_i}  \colon A \to B$  respectively.
Recall that $i_+ \colon \cala \to \cala_{\Phi}[t]$ is the obvious inclusion. 
Then 
\[\ev_{s = 0} \circ j_s=j_+ \circ \ev_0^+, \quad \ev_{s = 1} \circ j_s=\id_{\cala_{\Phi}[t]}.\]

The composite of both $\ev_{s = 0}$ and $\ev_{s = 1}$ with the canonical inclusion 
$k_+  \colon \cala_{\Phi}[t] \to \bigl(\cala_{\Phi}[t]\bigr)[s]$ is the identity. Since $\bfE$
is homotopy stable by assumption, the map 
$\bfE(k_+) \colon \bfE\bigl(\cala_{\Phi}[t]\bigr) \to \bfE\bigl((\cala_{\Phi}[t])[s]\bigr)$ 
is a weak equivalence. Hence the functors $\ev_{s = 0}$ and $\ev_{s = 1}$
induce same homomorphism after applying $\pi_i \circ \bfE$ for $i \in \IZ$.  This implies
that the composite
\[\pi_i\bigl(\bfE(\cala_{\phi}[t])\bigr) \xrightarrow{\pi_i(\bfE(\ev_0^+))} \pi_i\bigl(\bfE(\cala)\bigr)  
\xrightarrow{\pi_i(\bfE(i_+))} \pi_i\bigl(\bfE(\cala_{\phi}[t])\bigr)\]
is the identity. Since $\ev_0^+ \circ i_+ = \id_{\cala}$, also the composite
\[ \pi_i\bigl(\bfE(\cala)\bigr)  \xrightarrow{\pi_i(\bfE(i_+))}
\pi_i\bigl(\bfE(\cala_{\phi}[t])\bigr)  \xrightarrow{\pi_i(\bfE(\ev_0^+))} \pi_i\bigl(\bfE(\cala)\bigr)
\]
is the identity. Hence $\bfE(\ev_0^+) \colon \bfE(\cala_{\Phi}[t]) \to \bfE(\cala)$ is a weak homotopy equivalence.
\end{proof}

\begin{remark}[Universal property of $\bfH$]
  \label{rem:universal_property_of_ngH}
  Notice that
  Lemma~\ref{lem:properties_of_bfHbfE}~\ref{lem:properties_of_bfHbfE:bfHbfE_is_homotopy_stable}
  says that up to weak homotopy equivalence the transformation $\bfh \colon \bfE\to \bfH\bfE$
  is universal (from the left) among transformations $\bff \colon \bfE \to \bfF$ to  homotopy stable
  functors $\bfF \colon \addcat \to \Spectra$ since we obtain a commutative square whose lower
  vertical arrow is a weak homotopy equivalence
  \[
  \xymatrix{\bfE \ar[r]^{\bfh} \ar[d]^{\bff} & \bfH\bfE \ar[d]^{\bfH\bff}
    \\
    \bfF \ar[r]_{\bfh}^{\simeq} & \bfH\bfF
  }
  \]
\end{remark}

Lemma~\ref{lem:properties_of_bfHbfE}~\ref{lem:properties_of_bfHbfE:twisted_homotopy_stable} 
essentially says that homotopy stable automatically implies homotopy stable in the twisted case.

\begin{definition}[Homotopy $K$-theory]
\label{homotopy_K-theory}
Define the homotopy $K$-theory functors
\[
\bfH\bfK, \bfH\bfKinfty \colon \addcat \to \Spectra
\]
to be the homotopy stabilization in the sense of 
Definition~\ref{def:homotopy_stabilization} of the functors $\bfK, \bfKinfty \colon \addcat \to \Spectra$.
\end{definition}

\begin{lemma} \label{lem:Compatibility_of_bfH_and[infty]}
The functor $\bfH\bfK$ is $1$-contracted and there is a weak equivalence
\[
\bfH\bfK[\infty] \xrightarrow{\simeq} \bfH\bfKinfty.
\]
\end{lemma}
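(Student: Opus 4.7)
The plan is to deduce both parts from the observation that $\bfH$ is built from the endofunctor $F_+\colon \cala\mapsto \cala[t]$, which commutes up to canonical natural isomorphism with each of $z[t]$, $z[t^{-1}]$, $z[t,t^{-1}]$, simply because polynomial constructions in distinct formal variables commute. Via Lemma~\ref{lem:commuting_[infty]_and_F}, this forces every auxiliary functor $\bfZ_{\pm}$, $\bfZ$, $\bfN_{\pm}$, $\bfB_r$, $\bfB$ and the Bass-Heller-Swan transformation itself to commute with precomposition by $F_+$, and hence to commute up to weak equivalence with $\bfH$ after passing to the sequential homotopy colimit (via the same argument used in the proof of Theorem~\ref{the:Compatibility_of_the_delooping_construction_with_homotopy_colimits}, together with Lemma~\ref{lem:diagrams_of_spectra}).

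For the first claim, $\bfK$ is $1$-contracted by Theorem~\ref{the:Bass-Heller-Swan_Theorem_for_bfK}, and iterating Lemma~\ref{lem:commuting_[infty]_and_F} shows that each $\bfF_+^n\bfK$ is $1$-contracted, with the natural retractions $\rho_i$ transported along the relevant natural isomorphisms. Taking the sequential hocolim in $n$, the identifications $\bfB(\bfH\bfK)\simeq \bfH(\bfB\bfK)$, $\bfB_r(\bfH\bfK)\simeq \bfH(\bfB_r\bfK)$ and $\bfZ(\bfH\bfK)\simeq \bfH(\bfZ\bfK)$ combined with Lemma~\ref{lem:diagrams_of_spectra}~\ref{lem:diagrams_of_spectra:pi_and_filtered} compute $\pi_i$ of the Bass-Heller-Swan maps for $\bfH\bfK$ as the filtered colimits of the corresponding groups for $\bfF_+^n\bfK$. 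Bijectivity of $\pi_i(\bfBHS)$ in degrees $i\ge 0$ is preserved under filtered colimits, and the natural retractions assemble into a natural retraction for $\bfH\bfK$; this verifies both conditions of Definition~\ref{def:c-contracted_functor}.

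For the second claim, I first show that $\bfH\bfKinfty$ is $\infty$-contracted. Since $\bfBHS(\bfKinfty)$ is a weak equivalence by Theorem~\ref{the:Main_property_of_the_delooping_construction}~\ref{the:Main_property_of_the_delooping_construction:E[infty]}, and the analogous commutation identifies $\bfBHS(\bfH\bfKinfty)$ up to weak equivalence with $\hocolim_n \bfBHS(\bfF_+^n\bfKinfty)$ whose terms are all weak equivalences, the hocolim is a weak equivalence too. The natural transformation $\bfH(\bfd)\colon \bfH\bfK\to\bfH\bfKinfty$ is a $\pi_i$-isomorphism in sufficiently high degrees, being the filtered colimit of the $\pi_i$-isomorphisms $\bfd(\cala[t_1,\dots,t_n])$ guaranteed by Theorem~\ref{the:Main_property_of_the_delooping_construction}~\ref{the:Main_property_of_the_delooping_construction:pi_i(bfd)}. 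Corollary~\ref{cor:turning_a_transformation_into_a_weak_equivalence} then produces a weak equivalence $\bfH(\bfd)[\infty]\colon \bfH\bfK[\infty]\xrightarrow{\simeq} \bfH\bfKinfty[\infty]$. Composed with the weak equivalence $\bfd(\bfH\bfKinfty)\colon \bfH\bfKinfty\xrightarrow{\simeq}\bfH\bfKinfty[\infty]$ from Theorem~\ref{the:Main_property_of_the_delooping_construction}~\ref{the:Main_property_of_the_delooping_construction:d}, this yields the desired weak equivalence $\bfH\bfK[\infty]\simeq\bfH\bfKinfty$, either as a zig-zag or, in the homotopy category, via the universal factorization of Remark~\ref{rem:Universal_property_of_the_delooping_construction_in_the_homotopy_category}.

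The main technical obstacle is the bookkeeping in the first paragraph: one must verify that the natural isomorphism $F_+\circ z[t^{\pm 1}]\cong z[t^{\pm 1}]\circ F_+$ is compatible with all the structure morphisms $i_\pm$, $j_\pm$, $\ev_0^\pm$ that assemble the Bass-Heller-Swan machinery, so that the BHS transformation really is natural with respect to $\bfF_+$ and hence descends to $\bfH$ up to weak equivalence after filtered hocolim. Once this compatibility is nailed down, both assertions follow by routine application of the machinery developed in Sections~\ref{sec:Contracted_functors}--\ref{sec:Compatibility_of_the_delooping_construction_with_homotopy_colimits}.
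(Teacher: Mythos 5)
Your proposal is correct and follows the same overall strategy as the paper's (very terse) proof: transport $c$-contractedness along $\bfH$ using that $\bfH\bfE$ is computed by the colimit of $\bfE(\cala[t_1,\dots,t_n])$, deduce that $\bfH\bfK$ is $1$-contracted from Theorem~\ref{the:Bass-Heller-Swan_Theorem_for_bfK} and that $\bfH\bfKinfty$ is $\infty$-contracted, then compare via Corollary~\ref{cor:turning_a_transformation_into_a_weak_equivalence} / Lemma~\ref{lem:alpha_bijective} applied to $\bfH\bfd\colon\bfH\bfK\to\bfH\bfKinfty$.

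The one stylistic difference: the paper shortcuts the bookkeeping you flag as the ``main technical obstacle'' by working directly with the homotopy-group identity $\pi_*\bfH\bfE(\cala)\cong\colim_n\pi_*\bfE(\cala[t_1,\dots,t_n])$, so that $c$-contractedness of $\bfH\bfE$ follows immediately from applying $\bfE$ to the diagram $\cala\mapsto\cala[t_1,\dots,t_n]$ and taking a filtered colimit of abelian groups, with no need to first commute $\bfH$ past $\bfZ$, $\bfN_\pm$, $\bfB$, $\bfB_r$ at the spectrum level. Your detour through Lemma~\ref{lem:commuting_[infty]_and_F} and the commutation of $F_+$ with the $z$-functors is valid but buys you more than the lemma needs; the paper's observation makes the ``bookkeeping'' essentially disappear. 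Both routes reach the same conclusion.
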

\begin{proof}
As 
\[\pi_* \bfH\bfE(\cala) \cong \colim_n \pi_* \bfE(\cala[t_1, \dots, t_n])\]
we conclude that $\bfH\bfE$ is $c$-contracted provided that $E$ is $c$-contracted. 
Applying Theorem~\ref{the:Bass-Heller-Swan_Theorem_for_bfK} we see that $\bfH\bfK$ is 1-contracted. Also $\bfH\bfKinfty$ is $\infty$-contracted. 
The claim now follows from Lemma~\ref{lem:alpha_bijective}.
\end{proof}

\begin{lemma} [Bass-Heller-Swan for homotopy $K$-theory]
\label{lem_Bass-Heller-Swan_for_homotopy_K-theory}
Let $\cala$ be an additive category with an automorphism $\Phi \colon \cala \xrightarrow{\cong} \cala$.
Then we get for all $n \in \IZ$ a weak homotopy equivalence
\[
  \bfainfty  \colon 
  \bfT_{\bfKinfty(\Phi^{-1})} 
  \xrightarrow{\simeq}   \bfH\bfKinfty(\cala_{\Phi}[t,t^{-1}]).
\]
\end{lemma}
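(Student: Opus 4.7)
The plan is to apply the homotopy stabilization functor $\bfH$ to the non-connective twisted Bass-Heller-Swan equivalence of Theorem~\ref{the:BHS_decomposition_for_non-connective_K-theory}~\ref{the:BHS_decomposition_for_non-connective_K-theory:BHS-iso}, and to observe that the two $\bfN$-summands become contractible while the mapping-torus and Laurent-category terms pass through in a controlled way. (We interpret the left-hand side of the claim as $\bfT_{\bfH\bfKinfty(\Phi^{-1})}$, i.e., the mapping torus of the endomorphism $\bfH\bfKinfty(\Phi^{-1})$ of $\bfH\bfKinfty(\cala)$; the stated $\bfT_{\bfKinfty(\Phi^{-1})}$ is most naturally read in this sense, since $\bfKinfty$ and $\bfH\bfKinfty$ have different homotopy types in general.)

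First I would use that $\bfH$ is a sequential filtered homotopy colimit. By Lemma~\ref{lem:diagrams_of_spectra}~\ref{lem:diagrams_of_spectra:vee},~\ref{lem:diagrams_of_spectra:smash},~\ref{lem:diagrams_of_spectra:homotopy_fibers}, such a colimit commutes up to natural weak equivalence with finite wedges, with smash products with pointed spaces, and with homotopy fibers; hence also with homotopy pushouts and in particular with the mapping-torus construction. Applying $\bfH$ to the twisted BHS equivalence and using these compatibilities (exactly as in the proof of Theorem~\ref{the:BHS_decomposition_for_non-connective_K-theory}~\ref{the:BHS_decomposition_for_non-connective_K-theory:BHS-iso}) yields a natural weak equivalence
\[
\bfT_{\bfH\bfKinfty(\Phi^{-1})} \vee \bfH\bfNKinfty(\cala_\Phi[t]) \vee \bfH\bfNKinfty(\cala_\Phi[t^{-1}]) \xrightarrow{\simeq} \bfH\bfKinfty(\cala_\Phi[t,t^{-1}]),
\]
where $\bfH\bfNKinfty(\cala_\Phi[t^{\pm 1}])$ is identified with the homotopy fiber of the evaluation $\bfH\bfKinfty(\cala_\Phi[t^{\pm 1}]) \to \bfH\bfKinfty(\cala)$.

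Next I would argue that the two Nil-summands are weakly contractible. By Lemma~\ref{lem:properties_of_bfHbfE}~\ref{lem:properties_of_bfHbfE:bfHbfE_is_homotopy_stable}, the functor $\bfH\bfKinfty = \bfH(\bfKinfty)$ is homotopy stable. Consequently, Lemma~\ref{lem:properties_of_bfHbfE}~\ref{lem:properties_of_bfHbfE:twisted_homotopy_stable} applied to $\bfE = \bfH\bfKinfty$ shows that $\bfH\bfKinfty(\cala_\Phi[t]) \to \bfH\bfKinfty(\cala)$ is a weak equivalence, so its homotopy fiber is contractible. The analogous statement for $t^{-1}$ follows from the natural isomorphism of additive categories $\cala_\Phi[t^{-1}] \cong \cala_{\Phi^{-1}}[s]$ via $s = t^{-1}$. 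Combining these, the left-hand side of the equivalence above reduces to $\bfT_{\bfH\bfKinfty(\Phi^{-1})}$, giving the asserted weak equivalence.

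The principal obstacle is the bookkeeping: one must verify that $\bfH$ commutes (as functors on $\addcatt$) with $\bfT^t(-)$ and with $\bfN^t_\pm(-)$ in such a way that $\bfH$ applied to $\bfainfty \vee \bfbinfty_+ \vee \bfbinfty_-$ can be identified with the natural twisted BHS transformation for $\bfH\bfKinfty$. This parallels the argument used in the proof of Theorem~\ref{the:BHS_decomposition_for_non-connective_K-theory}~\ref{the:BHS_decomposition_for_non-connective_K-theory:BHS-iso} (where the delooping construction, rather than homotopy stabilization, played the role of the outer $\hocolim$) and rests on exactly the same compatibility lemmas from Section~\ref{sec:Compatibility_of_the_delooping_construction_with_homotopy_colimits}.
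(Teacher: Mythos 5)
Your proposal is correct and follows essentially the same route as the paper: apply $\bfH$ to the non-connective twisted Bass-Heller-Swan equivalence of Theorem~\ref{the:BHS_decomposition_for_non-connective_K-theory}~\ref{the:BHS_decomposition_for_non-connective_K-theory:BHS-iso}, using compatibility of homotopy colimits with wedges, smash products, and homotopy fibers, and then observe that the two Nil-summands vanish by Lemma~\ref{lem:properties_of_bfHbfE}~\ref{lem:properties_of_bfHbfE:twisted_homotopy_stable}. You are also right that the left-hand side in the statement should be read as $\bfT_{\bfH\bfKinfty(\Phi^{-1})}$ (this is what appears in the paper's own proof), and your supplementary remark identifying $\cala_\Phi[t^{-1}]\cong\cala_{\Phi^{-1}}[s]$ to cover the $t^{-1}$-summand correctly fills in a detail the paper leaves implicit.
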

\begin{proof}
We conclude from
  Theorem~\ref{the:BHS_decomposition_for_non-connective_K-theory}~%
\ref{the:BHS_decomposition_for_non-connective_K-theory:BHS-iso}
  and the fact that the Bass-Heller-Swan map is compatible with homotopy colimits in the
  spectrum variable and $\bfH\bfKinfty$ is defined as a homotopy colimit in terms of
  $\bfKinfty$ that there is a weak equivalence of spectra, natural in $(\cala,\Phi)$,
  \[
  \bfa \vee \bfb_+\vee \bfb_- \colon \bfT_{\bfH\bfKinfty(\Phi^{-1})} \vee
  \bfN\bfH\bfKinfty(\cala_{\Phi}[t]) \vee \bfN\bfH\bfKinfty(\cala_{\Phi}[t^{-1}]) \xrightarrow{\simeq}
  \bfH\bfKinfty(\cala_{\Phi}[t,t^{-1}]).
  \]
Since all the homotopy groups of the terms
$\bfN\bfH\bfKinfty(\cala_{\Phi}[t])$ and $\bfN\bfH\bfKinfty(\cala_{\Phi}[t^{-1}])$ vanish by
Lemma~\ref{lem:properties_of_bfHbfE}~\ref{lem:properties_of_bfHbfE:twisted_homotopy_stable}, 
Lemma~\ref{lem_Bass-Heller-Swan_for_homotopy_K-theory} follows.
\end{proof}

\begin{remark}[Identification with Weibel's definition]
  \label{rem:Identification_with_Weibel's_definition}
Weibel has defined a  version of homotopy $K$-theory for a ring $R$ by a simplicial construction
  in~\cite{Weibel(1989)}.   It is not hard to
  check using Remark~\ref{rem:universal_property_of_ngH}, which applies also to the
  constructions of~\cite{Weibel(1989)} instead of
  $\bfH$, that $\pi_i(\bfH\bfKidem(\calr))$ and $\pi_i(\bfH\bfKinfty(\calr))$ can be
  identified with the one in~\cite{Weibel(1989)}, if
  $\calr$ is a skeleton of the category of finitely generated free $R$-modules.
\end{remark}


\typeout{----------  Section 8: The Farrell-Jones Conjecture for homotopy K-theory  -----------------}

\section{The Farrell-Jones Conjecture for homotopy  $K$-theory}
\label{sec:The_Farrell-Jones_Conjecture_for_homotopy_K-theory}

The Farrell-Jones Conjecture for (non-connective) homotopy $K$-theory has been treated for rings
in~\cite{Bartels-Lueck(2006)}. Meanwhile it has turned out to be useful to formulate the
Farrell-Jones Conjecture for additive categories as coefficients since then one has much
better inheritance properties, see for instance~\cite{Bartels-Lueck(2009coeff)}
and~\cite{Bartels-Reich(2007coeff)}. The Farrell-Jones Conjecture for (non-connective)
$K$-theory for additive categories is true for a group $G$, if for any additive
$G$-category $\cala$ the assembly map
\[
H_n^G(\EGF{G}{\VCyc};\bfKinfty_{\cala}) \to H_n^G(\pt;\bfKinfty_{\cala}) = K_n(\int_G \cala)
\]
is bijective for all $n \in \IZ$, where $\EGF{G}{\VCyc}$ is 
the classifying space for the family of virtually cyclic groups.
If one replaces $\bfKinfty \colon \addcat \to \Spectra$
by the functor $\bfH\bfKinfty \colon \Spectra \to \addcat$ and $\EGF{G}{\VCyc}$
by the classifying space $\EGF{G}{\Fin}$ for the family of finite subgroups,
one obtains the Farrell-Jones Conjecture for
algebraic (non-connective) homotopy $K$-theory with coefficients in additive categories.
It predicts the bijectivity of 
\[
H_n^G(\EGF{G}{\Fin};\bfH\bfKinfty_{\cala}) \to H_n^G(\pt;\bfH\bfKinfty_{\cala})  = \KH_n(\int_G \cala)
\]
for all $n \in \IZ$, where $\KH_n(\calb)$ denotes $\pi_n\bigl((\bfH\bfKinfty(\calb)\bigr)$
for an additive category $\calb$.

For the following result, denote by $\FJH(G)$ the 
statement ``The Farrell-Jones Conjecture for algebraic homotopy $K$-theory with coefficients in additive categories holds for $G$''.

\begin{theorem}[Farrell-Jones Conjecture for homotopy $K$-theory]
\label{the:Farrell-Jones_Conjecture_for_homotopy_K-theory}
\
\begin{enumerate}

\item \label{the:Farrell-Jones_Conjecture_for_homotopy_K-theory:extensions}
Let $1 \to K \to G \to Q \to 1$ be an extensions of groups.
If $\FJH(Q)$ and $\FJH(p\inv(H))$ for any finite subgroup $H \subseteq Q$, then $\FJH(G)$;

\item \label{the:Farrell-Jones_Conjecture_for_homotopy_K-theory:trees} If $G$
  acts on a tree $T$ such that $\FJH(G_x)$ for every stabilizer group $G_x$ of $x \in T$, then $\FJH(G)$;

\item \label{the:Farrell-Jones_Conjecture_for_homotopy_K-theory:From_K_to_HK} 
  If $G$ satisfies the Farrell-Jones Conjecture for algebraic $K$-theory with coefficients in
  additive categories, then $\FJH(G)$.
\end{enumerate}
\end{theorem}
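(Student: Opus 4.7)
The essential content is part~\ref{the:Farrell-Jones_Conjecture_for_homotopy_K-theory:From_K_to_HK}; parts~\ref{the:Farrell-Jones_Conjecture_for_homotopy_K-theory:extensions} and~\ref{the:Farrell-Jones_Conjecture_for_homotopy_K-theory:trees} are inheritance statements that will follow from the corresponding results for $\bfKinfty$ in~\cite{Bartels-Echterhoff-Lueck(2008colim)} and~\cite{Bartels-Lueck(2006)}, respectively. The arguments given there apply word for word to any functor $\addcat \to \Spectra$ that generates an equivariant homology theory in the sense of Davis-L\"uck and commutes with filtered colimits. For $\bfH\bfKinfty$ both hypotheses hold: using Lemma~\ref{lem:Compatibility_of_bfH_and[infty]} we have $\bfH\bfKinfty \simeq \bfH\bfK[\infty]$, and Proposition~\ref{prop:compatibility_with_filtered_colimits} (applied to $\bfH\bfK$) together with the fact that $\bfH$, being a sequential homotopy colimit, commutes with filtered colimits gives the desired compatibility.

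For part~\ref{the:Farrell-Jones_Conjecture_for_homotopy_K-theory:From_K_to_HK}, the plan is to apply the transitivity principle for equivariant homology theories in two steps. Step one: deduce from the hypothesis that the $\bfH\bfKinfty$-assembly with family $\VCyc$,
\[
H_n^G(\EGF{G}{\VCyc};\bfH\bfKinfty_{\cala}) \to \KH_n\bigl({\textstyle\int_G} \cala\bigr),
\]
is bijective. Since $\bfH\bfKinfty_{\cala}$ is by construction the sequential homotopy colimit of $\bfKinfty_{\cala[t_1,\ldots,t_k]}$ as $k$ varies, and equivariant Davis-L\"uck homology commutes with sequential homotopy colimits of coefficient spectra, the left-hand side above is the sequential homotopy colimit of the FJ assemblies for $\bfKinfty_{\cala[t_1,\ldots,t_k]}$, each of which is bijective by the hypothesis applied to a different additive $G$-category of coefficients. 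Step two: shrink the family from $\VCyc$ to $\Fin$ by the transitivity principle. It suffices to verify the FJH Conjecture with family $\Fin$ for each virtually cyclic subgroup $V \subseteq G$. The case of finite $V$ is trivial since then $\EGF{V}{\Fin} = \pt$. For an infinite virtually cyclic group of type I, say $V = F \rtimes \IZ$ with $F$ finite, one takes $\EGF{V}{\Fin}$ to be the mapping torus of the $\IZ$-action on $\EGF{F}{\Fin} = \pt$; the left-hand side of the assembly then becomes the mapping torus of an induced automorphism on $\pi_n\bfH\bfKinfty({\textstyle\int_F} \cala)$, while the right-hand side is $\pi_n\bfH\bfKinfty\bigl(({\textstyle\int_F} \cala)_{\Phi}[t,t^{-1}]\bigr)$, which by Lemma~\ref{lem_Bass-Heller-Swan_for_homotopy_K-theory} is the mapping torus of exactly the same automorphism. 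The type II case, where $V$ is an amalgamated product of two finite groups over a common index-two subgroup, reduces to finite stabilizers via part~\ref{the:Farrell-Jones_Conjecture_for_homotopy_K-theory:trees} (which we have already proved independently) applied to the standard tree action of $V$.

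The hard part will be the naturality verification in step two: one must check that the two mapping tori identified above are identified \emph{by the assembly map itself}, not merely by an abstract zigzag of equivalences. This amounts to a point-set-level comparison between the equivariant geometric model of $\EGF{V}{\Fin}$ and the delooping construction underlying Lemma~\ref{lem_Bass-Heller-Swan_for_homotopy_K-theory}; while conceptually routine, it is the only input that does not reduce directly to previously cited results.
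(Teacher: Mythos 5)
Your proof is correct and, in broad strokes, follows the same strategy as the paper: parts~\ref{the:Farrell-Jones_Conjecture_for_homotopy_K-theory:extensions} and~\ref{the:Farrell-Jones_Conjecture_for_homotopy_K-theory:trees} are formal inheritance statements borrowed from the cited literature, and part~\ref{the:Farrell-Jones_Conjecture_for_homotopy_K-theory:From_K_to_HK} passes from $\bfKinfty$ to $\bfH\bfKinfty$ by commuting the assembly map with the homotopy colimit defining $\bfH$, followed by a transitivity argument to shrink $\VCyc$ to $\Fin$ using the twisted Bass-Heller-Swan decomposition of Lemma~\ref{lem_Bass-Heller-Swan_for_homotopy_K-theory}. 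Two remarks.

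First, a small framing slip in parts~\ref{the:Farrell-Jones_Conjecture_for_homotopy_K-theory:extensions} and~\ref{the:Farrell-Jones_Conjecture_for_homotopy_K-theory:trees}: the reference~\cite{Bartels-Lueck(2006)} (Corollary~4.4 and the tree argument) is \emph{already} a paper about homotopy $K$-theory, not a result about $\bfKinfty$ that one then needs to transport. The paper simply notes that those arguments, being on the level of equivariant homology theories, carry over from rings to additive-category coefficients; you do not need to derive them ``from the corresponding results for $\bfKinfty$.''

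Second, and more substantively, your route from $\VCyc$ to $\Fin$ is genuinely different from the paper's and worth comparing. The paper first invokes~\cite[Remark~1.6]{Davis-Quinn-Reich(2011)} to replace $\VCyc$ by the smaller family $\VCyc_I$ of type~I virtually cyclic subgroups \emph{already at the level of the $\bfKinfty$-assembly}, and then only has to treat type~I groups after passing to $\bfH\bfKinfty$; the general type~I case is reduced to $V=\IZ$ via part~\ref{the:Farrell-Jones_Conjecture_for_homotopy_K-theory:extensions}. You instead pass to $\bfH\bfKinfty$ right away with the full family $\VCyc$ and then dispose of the type~II subgroups by applying part~\ref{the:Farrell-Jones_Conjecture_for_homotopy_K-theory:trees} to the Bass-Serre tree with finite stabilizers. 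Both routes are valid; yours avoids the dependence on~\cite{Davis-Quinn-Reich(2011)}, at the cost of using part~\ref{the:Farrell-Jones_Conjecture_for_homotopy_K-theory:trees} as an input to part~\ref{the:Farrell-Jones_Conjecture_for_homotopy_K-theory:From_K_to_HK} (which is fine, since~\ref{the:Farrell-Jones_Conjecture_for_homotopy_K-theory:trees} is established independently). Your direct mapping-torus argument for the type~I case is essentially the paper's, just written out for $V=F\rtimes\IZ$ rather than reduced first to $V=\IZ$. You are also right to flag the point-set identification of the assembly map with the map of Lemma~\ref{lem_Bass-Heller-Swan_for_homotopy_K-theory} as the one non-formal verification; the paper makes exactly this identification but does not dwell on it.
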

\begin{proof}~\ref{the:Farrell-Jones_Conjecture_for_homotopy_K-theory:extensions} This follows
from~\cite[Corollary~4.4]{Bartels-Lueck(2006)}.
\\[1mm]~\ref{the:Farrell-Jones_Conjecture_for_homotopy_K-theory:trees} It is easy to check
that the arguments in Bartels-L\"uck~\cite{Bartels-Lueck(2006)} carry over from rings to
additive categories since they are on the level of equivariant homology theories.
\\[1mm]~\ref{the:Farrell-Jones_Conjecture_for_homotopy_K-theory:From_K_to_HK} 
It follows from~\cite[Remark~1.6]{Davis-Quinn-Reich(2011)} that the assembly map
\[
H_n^G(\EGF{G}{\VCyc_I};\bfKinfty_{\cala})\to  H_n^G(\pt;\bfKinfty_{\cala}) = K_n(\int_G \cala)
\]
is bijective for $n \in \IZ$, where we have replaced $\VCyc$ by the smaller family of
subgroups $\VCyc_I$ of virtually cyclic subgroups of type $I$, i.e., of subgroups which
are either finite or admit an epimorphism to $\IZ$ with finite kernel. Since
$\bfH\bfKinfty$ is given by a specific homotopy colimit, the assembly map is required to
be bijective for all additive $G$-categories $\cala$ and is compatible with homotopy
colimits in the spectrum variable, we conclude that
\[
H_n^G(\EGF{G}{\VCyc_I};\bfH\bfKinfty_{\cala}) \to H_n^G(\pt;\bfH\bfKinfty_{\cala})  = \KH_n(\int_G \cala)
\]
is bijective for $n \in \IZ$. In order to replace $\VCyc_I$ by $\Fin$, we have to show
in view of the Transitivity Principle, see for 
instance~\cite[Theorem~1.11]{Bartels-Farrell-Lueck(2011cocomlat)}
or~\cite[Theorem~1.5]{Bartels-Lueck(2007ind)},
that for any virtually cyclic group $V$ of type $I$ the assembly map
\[
H_n^G(\EGF{V}{\Fin};\bfH\bfKinfty_{\cala}) \to H_n^G(\pt;\bfH\bfKinfty_{\cala}) = \KH_n(\int_V \cala)
\]
is bijective for all $n \in \IZ$.  This follows for $V = \IZ$ from
Lemma~\ref{lem_Bass-Heller-Swan_for_homotopy_K-theory} since the assembly map appearing
above can be identified with the map appearing in
Lemma~\ref{lem_Bass-Heller-Swan_for_homotopy_K-theory}. The general case of an extension
$1 \to F \to V \to \IZ$ can be reduced to  case $V = \IZ$ by 
assertion~\ref{the:Farrell-Jones_Conjecture_for_homotopy_K-theory:extensions}.
\end{proof}

\begin{remark}[Wreath products]
  \label{rem:wreath_products}
  We can also consider the versions ``with finite wreath products'', i.e., we require for
  a group $G$ that the Farrell-Jones Conjecture is not only satisfied for $G$ itself, but
  for all wreath products of $G$ with finite groups,   see for  instance~\cite{Farrell-Roushon(2000)}.  
  The advantage of this version is that it is
  inherited to overgroups of finite
  index. This follows from the fact that an overgroup
  $H$ of finite index of $G$ can be embedded into a wreath product $G \wr F$ 
  for a finite group $F$, see~\cite[Section~2.6]{Dixon-Mortimer(1996)}.
  Theorem~\ref{the:Farrell-Jones_Conjecture_for_homotopy_K-theory} remains true for
  the version with finite wreath products, where
  assertion~\ref{the:Farrell-Jones_Conjecture_for_homotopy_K-theory:extensions} can be
  reduced to the statement that for an extension $1 \to G \to Q \to 1$ the Farrell-Jones
  Conjecture with wreath products holds for $G$ if it holds for $K$ and $Q$.
\end{remark}

\begin{remark}[Status of the Farrell-Jones Conjecture for homotopy $K$-theory]
  Because of assertion~\ref{the:Farrell-Jones_Conjecture_for_homotopy_K-theory:extensions} 
  and~\ref{the:Farrell-Jones_Conjecture_for_homotopy_K-theory:trees} of
  Theorem~\ref{the:Farrell-Jones_Conjecture_for_homotopy_K-theory}, the class of groups
  for which the Farrell-Jones Conjecture for homotopy algebraic $K$-theory is known is
  larger than the class for the Farrell-Jones Conjecture for algebraic $K$-theory. For
  instance, elementary amenable groups satisfy the version for homotopy $K$-theory, just
  adapt the argument in~\cite[Theorem~1.3~(i),
  Lemma~2.12]{Bartels-Lueck-Reich(2008appl)}. On the other hand, it is not known whether
  the Farrell-Jones Conjecture for algebraic $K$-theory holds for the semi-direct product
  $\IZ[1/2] \rtimes \IZ$, where $\IZ$ acts by multiplication with $2$ on $\IZ[1/2] \rtimes
  \IZ$. 

  To summarize, the Farrell-Jones Conjecture for homotopy $K$-theory for coefficients in
  additive categories with finite wreath products has the following properties:
  \begin{itemize}
  \item It is known for elementary amenable groups, hyperbolic, CAT(0)-groups, $GL_n(R)$
    for a ring $R$ whose underlying abelian group is finitely generated, arithmetic groups
    over number fields, arithmetic groups over global fields, cocompact lattices in almost connected Lie groups,
    fundamental groups of  (not necessarily compact) $3$-manifolds (possibly with boundary), and one-relator groups;
  \item It is closed under taking subgroups;
  \item It is closed under taking finite direct products and finite free products;
  \item It is closed under directed colimits (with not necessarily injective) structure
    maps;
  \item It is closed under extensions as explained in Remark~\ref{rem:wreath_products};
  \item It has the tree property, see
    Theorem~\ref{the:Farrell-Jones_Conjecture_for_homotopy_K-theory}%
~\ref{the:Farrell-Jones_Conjecture_for_homotopy_K-theory:trees};
  \item It is closed under passing to overgroups of finite index.
  \end{itemize}
  This follows from the results above
  and~\cite{Bartels-Farrell-Lueck(2011cocomlat)},~\cite{Bartels-Lueck(2012annals)},~\cite{Bartels-Lueck-Reich(2008hyper)},%
~\cite{Bartels-Lueck-Reich-Rueping(2012)}, and~\cite{Rueping(2013)}.
\end{remark}

 \begin{remark}[Implications of the homotopy $K$-theory version to the $K$-theory version]
 \label{rem:Implications_of_the_homotopy_K-theory_version_to_the_K-theory_version}
   We have already seen above that the Farrell-Jones Conjecture for $K$-theory
 with coefficients in additive categories implies the Farrell-Jones Conjecture for homotopy $K$-theory
 with coefficients in additive categories. Next we discuss some cases,  where 
 the Farrell-Jones Conjecture for homotopy $K$-theory with coefficients in the  ring $R$
 gives implications for the injectivity part of the Farrell-Jones Conjecture for $K$-theory
 with coefficients in the ring $R$. These all follow by inspecting for a ring $R$ the following commutative diagram
 \[
\xymatrix{H_n^G(\EGF{G}{\VCyc};\bfKinfty_R) \ar[r] 
&
H_n^G(\pt;\bfKinfty_{R}) = K_n(RG) \ar[dd]^{\KH(\bfh^{\infty})}
\\
H_n^G(\EGF{G}{\Fin};\bfKinfty_R) \ar[u]^f \ar[d]_{h^{\infty}}
&
\\
H_n^G(\EGF{G}{\Fin};\bfH\bfKinfty_R) \ar[r]
&
H_n^G(\pt;\bfH\bfKinfty_{R}) = \KH_n(RG)
}
\]
where the two vertical arrows pointing downwords are induced by the transformation
$\bfh^{\infty} \colon \bfKinfty \to \bfH\bfKinfty$, the map $f$ is induced by the
inclusion of families $\Fin \subseteq \VCyc$ and the two horizontal arrows are the
assembly maps for $K$-theory and homotopy $K$-theory.

Suppose that $R$ is regular and the order of any finite subgroup of $G$ is invertible in $R$.
Then the two left vertical arrows are known to be bijections. This follows
for $f$ from~\cite[Proposition~2.6 on page~686]{Lueck-Reich(2005)}
and for $h^{\infty}$  from~\cite[Lemma~4.6]{Davis-Lueck(1998)} and  the fact that $RH$ is regular for all finite subgroups
$H$ of $G$ and hence $K_n(RH) \to \KH_n(RH)$ is bijective for all $n \in \IZ$.
Hence the (split) injectivity  
of the lower horizontal arrow implies the  (split) injectivity  
of the upper horizontal  arrow.

Suppose that $R$ is regular. Then the two left vertical arrows are rational bijections, This follows
for $f$ from~\cite[Theorem~0.3]{Lueck-Steimle(2013splitasmb)}.
To show it for $h^{\infty}$ it suffices 
because of~\cite[Lemma~4.6]{Davis-Lueck(1998)} to
show that $K_n(RH) \to \KH_n(RH)$ is rationally bijective for each finite group $H$ and $n
\in \IZ$.  By the version of the spectral sequence appearing in~\cite[1.3]{Weibel(1989)}
for non-connective $K$-theory, it remains to show that $N^pK_n(RH)$ vanishes rationally
for all $n \in \IZ$. Since $R[t]$ is regular if $R$ is, this boils down to show that
$\NK_p(RH)$ is rationally trivial for any regular ring $R$ and any finite group $H$.  
This reduction can also be shown by proving directly that the structure maps
of the system of spectra appearing  in the Definition~\ref{def:homotopy_stabilization}
 of $\bfH\bfK$ are weak homotopy equivalences.
The proof that $ \NK_p(RH)$ is rationally trivial for any regular ring $R$ and any finite group $H$
can be found for instance  in~\cite[Theorem~9.4]{Lueck-Steimle(2013splitasmb)}. 
Hence the upper horizontal arrow is rationally injective if the lower horizontal arrow is rationally injective.
\end{remark}


\typeout{-----------------------  References ------------------------}




\end{document}